\numberwithin{equation}{section}
\DeclareMathOperator{\prob}{Prob}
\DeclareMathOperator{\cyc}{c}
\DeclareMathOperator{\Li}{Li}
\chardef\bslash=`\\ 
\begin{document}


\newtheorem{Theorem}{Theorem}[section]

\newtheorem{cor}[Theorem]{Corollary}

\newtheorem{Conjecture}[Theorem]{Conjecture}

\newtheorem{Lemma}[Theorem]{Lemma}
\newtheorem{lemma}[Theorem]{Lemma}
\newtheorem{property}[Theorem]{Property}
\newtheorem{proposition}[Theorem]{Proposition}
\newtheorem{ax}[Theorem]{Axiom}
\newtheorem{claim}[Theorem]{Claim}

\newtheorem{nTheorem}{Surjectivity Theorem}

\theoremstyle{definition}
\newtheorem{Definition}[Theorem]{Definition}
\newtheorem{problem}[Theorem]{Problem}
\newtheorem{question}[Theorem]{Question}
\newtheorem{Example}[Theorem]{Example}

\newtheorem{remark}[Theorem]{Remark}
\newtheorem{diagram}{Diagram}
\newtheorem{Remark}[Theorem]{Remark}
\newcommand{\diagref}[1]{diagram~\ref{#1}}
\newcommand{\thmref}[1]{Theorem~\ref{#1}}
\newcommand{\secref}[1]{Section~\ref{#1}}
\newcommand{\subsecref}[1]{Subsection~\ref{#1}}
\newcommand{\lemref}[1]{Lemma~\ref{#1}}
\newcommand{\corref}[1]{Corollary~\ref{#1}}
\newcommand{\exampref}[1]{Example~\ref{#1}}
\newcommand{\remarkref}[1]{Remark~\ref{#1}}
\newcommand{\corlref}[1]{Corollary~\ref{#1}}
\newcommand{\claimref}[1]{Claim~\ref{#1}}
\newcommand{\defnref}[1]{Definition~\ref{#1}}
\newcommand{\propref}[1]{Proposition~\ref{#1}}
\newcommand{\prref}[1]{Property~\ref{#1}}
\newcommand{\itemref}[1]{(\ref{#1})}


\newcommand{\CE}{\mathcal{E}}
\newcommand{\CG}{\mathcal{G}}\newcommand{\CV}{\mathcal{V}}
\newcommand{\CL}{\mathcal{L}}
\newcommand{\CM}{\mathcal{M}}
\newcommand{\A}{\mathcal{A}}
\newcommand{\CO}{\mathcal{O}}
\newcommand{\B}{\mathcal{B}}
\newcommand{\CS}{\mathcal{S}}
\newcommand{\CX}{\mathcal{X}}
\newcommand{\CY}{\mathcal{Y}}
\newcommand{\CT}{\mathcal{T}}
\newcommand{\CW}{\mathcal{W}}
\newcommand{\CJ}{\mathcal{J}}

\newcommand{\st}{\sigma}
\renewcommand{\k}{\varkappa}
\newcommand{\Frac}{\mbox{Frac}}
\newcommand{\XC}{\mathcal{X}}
\newcommand{\wt}{\widetilde}
\newcommand{\wh}{\widehat}
\newcommand{\mk}{\medskip}
\renewcommand{\sectionmark}[1]{}
\renewcommand{\Im}{\operatorname{Im}}
\renewcommand{\Re}{\operatorname{Re}}
\newcommand{\la}{\langle}
\newcommand{\ra}{\rangle}
\newcommand{\LND}{\mbox{LND}}
\newcommand{\Pic}{\mbox{Pic}}
\newcommand{\lnd}{\mbox{lnd}}
\newcommand{\GLND}{\mbox{GLND}}\newcommand{\glnd}{\mbox{glnd}}
\newcommand{\Der}{\mbox{DER}}\newcommand{\DER}{\mbox{DER}}
\renewcommand{\th}{\theta}
\newcommand{\ve}{\varepsilon}
\newcommand{\1}{^{-1}}
\newcommand{\iy}{\infty}
\newcommand{\iintl}{\iint\limits}
\newcommand{\capl}{\operatornamewithlimits{\bigcap}\limits}
\newcommand{\cupl}{\operatornamewithlimits{\bigcup}\limits}
\newcommand{\suml}{\sum\limits}
\newcommand{\ord}{\operatorname{ord}}
\newcommand{\gal}{\operatorname{Gal}}
\newcommand{\bk}{\bigskip}
\newcommand{\fc}{\frac}
\newcommand{\g}{\gamma}
\newcommand{\be}{\beta}
\newcommand{\dl}{\delta}
\newcommand{\Dl}{\Delta}
\newcommand{\lm}{\lambda}
\newcommand{\Lm}{\Lambda}
\newcommand{\om}{\omega}
\newcommand{\ov}{\overline}
\newcommand{\vp}{\varphi}
\newcommand{\kap}{\varkappa}

\newcommand{\Vp}{\Phi}
\newcommand{\Varphi}{\Phi}
\newcommand{\BC}{\mathbb{C}}
\newcommand{\C}{\mathbb{C}}\newcommand{\BP}{\mathbb{P}}
\newcommand{\BQ}{\mathbb {Q}}
\newcommand{\BM}{\mathbb{M}}
\newcommand{\BR}{\mathbb{R}}\newcommand{\BN}{\mathbb{N}}
\newcommand{\BZ}{\mathbb{Z}}\newcommand{\BF}{\mathbb{F}}
\newcommand{\BA}{\mathbb {A}}
\renewcommand{\Im}{\operatorname{Im}}
\newcommand{\idd}{\operatorname{id}}
\newcommand{\ep}{\epsilon}
\newcommand{\tp}{\tilde\partial}
\newcommand{\doe}{\overset{\text{def}}{=}}
\newcommand{\supp} {\operatorname{supp}}
\newcommand{\loc} {\operatorname{loc}}
\newcommand{\de}{\partial}
\newcommand{\z}{\zeta}
\renewcommand{\a}{\alpha}
\newcommand{\G}{\Gamma}
\newcommand{\der}{\mbox{DER}}

\newcommand{\Spec}{\operatorname{Spec}}
\newcommand{\Sym}{\operatorname{Sym}}
\newcommand{\aut}{\operatorname{Aut}}
\newcommand{\End}{\operatorname{End}}

\newcommand{\Idd}{\operatorname{Id}}

\newcommand{\tG}{\widetilde G}

\newcommand{\FX}{\mathfrac {X}}
\newcommand{\FV}{\mathfrac {V}}
\newcommand{\SX}{\mathcal {X}}
\newcommand{\SV}{\mathcal {V}}
\newcommand{\SO}{\mathcal {O}}
\newcommand{\SD}{\mathcal {D}}
\newcommand{\Sr}{\rho}
\newcommand{\SR}{\mathcal {R}}
\newcommand{\cl}{\mathcal{C}}
\newcommand{\ok}{\mathcal{O}_K}
\newcommand{\ab}{\mathfrak{Ab}}

\setcounter{equation}{0} \setcounter{section}{0}

\newcommand{\ds}{\displaystyle}
\newcommand{\gl}{\lambda}
\newcommand{\gL}{\Lambda}
\newcommand{\gge}{\epsilon}
\newcommand{\gG}{\Gamma}
\newcommand{\ga}{\alpha}
\newcommand{\gb}{\beta}
\newcommand{\gd}{\delta}
\newcommand{\gD}{\Delta}
\newcommand{\gs}{\sigma}
\newcommand{\mbq}{\mathbb{Q}}
\newcommand{\mbr}{\mathbb{R}}
\newcommand{\mbz}{\mathbb{Z}}
\newcommand{\mbc}{\mathbb{C}}
\newcommand{\mbn}{\mathbb{N}}
\newcommand{\mbp}{\mathbb{P}}
\newcommand{\mbf}{\mathbb{F}}
\newcommand{\mbe}{\mathbb{E}}
\newcommand{\mby}{\mathbb{Y}}
\newcommand{\lcm}{\text{lcm}\,}
\newcommand{\mf}[1]{\mathfrak{#1}}
\newcommand{\ol}[1]{\overline{#1}}
\newcommand{\mc}[1]{\mathcal{#1}}
\newcommand{\nequiv}{\equiv\hspace{-.13in}/\;}

\newcommand{\hd}{h(-D)}
\newcommand{\Hd}{H(-D)}
\newcommand{\Q}{{\mathbb Q}} 
\newcommand{\Z}{{\mathbb Z}} 
\newcommand{\fixme}[1]{}

\newcommand{\E}[1]{\mathbb E\left(#1\right)}
\newcommand{\inv}{^{-1}}
\newcommand{\defeq}{\mathrel{\mathop:}=} 
\newcommand{\eqdef}{=\mathrel{\mathop:}} 
\newcommand{\abs}[1]{\left|#1\right|}
\newcommand{\dd}[1]{\mathop{d#1}}

\title{Missing class groups and class number statistics for imaginary quadratic fields}
\author{S. Holmin, N. Jones, P. Kurlberg, C. McLeman and K. Petersen}
\date{October 4, 2015}

\begin{abstract}
   The number $\mc{F}(h)$ of imaginary quadratic fields with class number $h$ is of classical interest:  Gauss' class number problem asks for a determination of those fields counted by $\mc{F}(h)$.  
The unconditional computation of $\mc{F}(h)$ for $h \leq 100$ was completed by M. Watkins, using ideas of Goldfeld and Gross-Zagier; Soundararajan has more recently made conjectures about the order of magnitude of $\mc{F}(h)$ as $h \rightarrow \infty$ and determined its average order.
  In the present paper, we refine Soundararajan's conjecture to a conjectural asymptotic formula and also
  consider the subtler problem of determining the number $\mc{F}(G)$ of imaginary
  quadratic fields with class group isomorphic to a given finite abelian group $G$.  
  Using Watkins' tables, one can show that some abelian groups do {\em
    not} occur as the class group of any imaginary quadratic field
  (for instance $(\mbz/3\mbz)^3$ does not).  This observation is
  explained in part by the Cohen-Lenstra heuristics, which have
  often been used to study the distribution of the \emph{$p$-part}
  of an imaginary quadratic class group.  We combine
  heuristics of Cohen-Lenstra together with our refinement of Soundararajan's conjecture to make precise predictions about the
  asymptotic nature of the \emph{entire} imaginary quadratic
  class group, in particular addressing the above-mentioned phenomenon of ``missing''
  class groups, for the case of $p$-groups as $p$ tends to infinity.  Furthermore, conditionally on the Generalized Riemann Hypothesis, we extend Watkins' data, tabulating $\mc{F}(h)$ for odd $h \leq 10^6$ and $\mc{F}(G)$ for $G$ a $p$-group of odd order with $|G| \leq 10^6$.  The numerical evidence matches quite well with our conjectures.
\end{abstract}

\maketitle

\section{Introduction} \label{introduction}

Given a fundamental discriminant $d < 0$, let $H(d)$ denote the ideal
class group of the imaginary quadratic field $\Q(\sqrt{d})$, and let
$h(d) := |H(d)|$ denote the class number.  A basic
question is: 
\begin{question} \label{fundamentalquestion}
Which finite abelian groups $G$ occur as $H(d)$ for some negative fundamental
discriminant $d$? 
\end{question}
Equivalently, which finite abelian groups $G$ do \emph{not}
occur as $H(d)$?  
The case where $G \simeq (\mbz/2\mbz)^r$ has classical connections via
genus theory to Euler's ``idoneal numbers,'' and it follows from work
of Chowla \cite{chowla} that for every $r \gg 1$, the group
$(\mbz/2\mbz)^r$ does not occur as the class group of any imaginary
quadratic field.  Later work of various authors
(\cite{boydkisilevsky}, \cite{weinberger}, \cite{heathbrown}) has
shown that $(\Z/n\Z)^r$ does not occur as an imaginary quadratic class
group for $r \gg 1$ and $2 \leq n \leq 6$ (in fact, Heath-Brown showed
that groups with exponent $2^{a}$ or $3 \cdot 2^{a}$ occur only
finitely many times.)  Moreover, $(\Z/n\Z)^r$ does not occur for $n>6$
and $r \gg_{n} 1$ {\em assuming} the Generalized Riemann Hypothesis
(cf. \cite{boydkisilevsky,weinberger}); in fact they show that the
exponent of $H(d)$ tends to infinity as $d \to -\infty$.

  Due to the possible existence of Siegel zeroes,
  the unconditional results mentioned above are ineffective.  To
  find explicit examples of missing class groups, one can undertake a
  brute-force search using tables of M. Watkins \cite{watkins}, who used the ideas of
  Goldfeld and Gross-Zagier to give an unconditional resolution of
  Gauss' class number problem for class numbers $h \leq 100$.  Such a
  search reveals that none of the groups 
  \[
  \left( \frac{\mbz}{3\mbz} \right)^3, \quad
  \frac{\mbz}{9\mbz} \times  \left( \frac{\mbz}{3\mbz} \right)^2, \quad  \left( \frac{\mbz}{3\mbz} \right)^4
  \]
  occur as the class group of an imaginary quadratic field.

  It is also natural to ask how common the groups that do
  occur are:
\begin{question} 
\label{refinedfundamentalquestion} 
Given a finite
  abelian group $G$, for how many fundamental discriminants $d<0$ 
  is $H(d) \simeq G$?  
\end{question}
In order to address this question, we are led to investigate a
closely related issue:
\begin{question}
\label{classnumbercountquestion} 
Given an integer $h>0$, for how many
  fundamental discriminants   $d<0$ is $|H(d)| = h$?
\end{question}

Questions \ref{fundamentalquestion},
\ref{refinedfundamentalquestion}, and \ref{classnumbercountquestion}
appear to be beyond the realm of what one can provably answer in full with current technology.  In this paper, we combine the heuristics of
Cohen-Lenstra with results on the distribution of special values of
Dirichlet $L$-functions to give a conjectural asymptotic answer to
Question \ref{classnumbercountquestion}, for $h$ odd.  (For this we
only use the Cohen-Lenstra heuristic to predict divisibility
properties of class numbers.)  Further, using this conjectured
asymptotic answer, we use the Cohen-Lenstra heuristic to predict
the $p$-group decomposition of $H(d)$ and obtain a conjectured
asymptotic answer to Question \ref{refinedfundamentalquestion} in the
case where $G$ is a $p$-group for an odd prime $p$.  
(We believe that similar results hold for composite class number, 
though here one must be careful in how limits are
taken; for instance with some groups of order $p_1^{n_1} p_2^{n_2}$, $p_1$ fixed and $p_2$ tending to infinity is very different from $p_1$ and $p_2$ both tending to infinity.)
In particular, regarding Question
\ref{fundamentalquestion}, we establish a precise condition on the
\emph{shape} of an abelian $p$-group which governs whether or not it
should occur as an imaginary quadratic class group for infinitely many
primes $p$.  For instance, our conjecture predicts that the group
\[
\left( \frac{\mbz}{p\mbz} \right)^3
\]
should appear as a class group for only finitely many primes $p$ (in
fact, quite likely for no primes $p$ at all;
cf. Conjecture \ref{NoElemAb} in Section~\ref{sec:numer-mathc-fg_l}), whereas the two groups
\[
\frac{\mbz}{p^3\mbz}, \quad \frac{\mbz}{p^2\mbz} \times \frac{\mbz}{p\mbz}
\]
should occur as a class group for infinitely many primes $p$.

Given a positive integer $h$ we set 
\begin{equation} \label{defofFh}
\mc{F}(h) := | \{ \text{fundamental discriminants $d < 0$ } : \; h(d) = h \} |.
\end{equation}
Thus for instance $\mc{F}(1) = 9$, which is the statement of the
Baker-Stark-Heegner theorem on Gauss' class number 1 problem for
imaginary quadratic fields.  Given a fixed finite abelian group $G$,
we consider the refined counting function 
\begin{equation*} 
\mc{F}(G) := | \{ \text{fundamental discriminants $d < 0$ } : \; H(d) \simeq G \} |,
\end{equation*}
so that $\mc{F}(h) = \sum_{|G| = h} \mc{F}(G)$, where the sum runs
over isomorphism classes of finite abelian groups of order $h$.  The
Cohen-Lenstra heuristics suggest that, for
any finite abelian group $G$ of odd order $h$, the expected number of
imaginary quadratic fields with class group $G$ is given by
\begin{equation} \label{factorizationofmcFofG}
\mc{F}(G) \approx P(G) \cdot \mc{F}(h),
\end{equation}
where 
\begin{equation} \label{defofPG}
P(G) :=  
\left( \frac{1}{| \aut(G)|} \right) \big/ 
\left( \sum_{\text{abel. groups }G'  \atop\text{s.t. } |G'| = |G|} \frac{1}{| \aut(G') |} \right).
\end{equation}
The first factor $P(G)$ may
be evaluated explicitly, whereas the second factor $\mc{F}(h)$ is more
delicate.   K. Soundararajan has conjectured (see \cite[p. 2]{sound})
that 
\begin{equation} \label{soundconjectureodd}
\mc{F}(h) \asymp \frac{h}{\log h} \quad\quad\quad\quad \left( h \text{ odd} \right).
\end{equation}
We refine Soundararajan's heuristic, sharpening \eqref{soundconjectureodd} to a conjectural asymptotic formula, which involves certain constants associated to a random Euler product.  Let $\mby = \{ \mathbb{Y}(p) : p \text{ prime}\}$ denote a collection of independent identically distributed random variables satisfying
\begin{equation*} 
\mathbb{Y}(p) :=
\begin{cases}
1 & \text{ with probability } 1/2 \\
-1 & \text{ with probability } 1/2
\end{cases}
\end{equation*}
and let
\[
L(1,\mby) := \prod_{p} \left( 1 - \frac{\mby(p)}{p} \right)^{-1}
\]
denote the corresponding random Euler product, which converges with probability one.  Define the constant
\begin{equation} \label{defofmfC}
\begin{split}
\mf{C} &:= 15 \prod_{\ell = 3 \atop{\ell \text{ prime}}}^\infty \prod_{i = 2}^\infty \left( 1 - \frac{1}{\ell^i} \right) \approx 11.317, \\
\end{split}
\end{equation}
as well as the factor (defined
for odd $h$)
\[
\mf{c}(h) := \prod_{p^n \parallel h} \prod_{i = 1}^n \left( 1 - \frac{1}{p^i} \right)^{-1}.
\]
\begin{Conjecture} \label{refinedsoundconjecture}
We have
\begin{equation} \label{verygoodprediction}
\mc{F}(h) \sim \frac{\mf{C}}{15} \cdot \mf{c}(h) \cdot h \cdot \mbe
\left( \frac{1}{L(1,\mby)^2 \log (\pi h / L(1,\mby))} \right) 
\sim
   \mf{C} \cdot \mf{c}(h)
    \cdot
  \frac{h}{\log(\pi h)}
\end{equation}
as $h \longrightarrow \infty$ through odd values. (Here $\mbe$ denotes expected value.)
\end{Conjecture}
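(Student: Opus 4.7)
The plan is a three-step heuristic derivation that combines genus theory, the class number formula with the distribution of $L(1, \chi_{-q})$, and the Cohen-Lenstra heuristic.

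First, by Gauss's genus theory, for a negative fundamental discriminant $d$ with $t$ distinct prime divisors the 2-rank of $H(d)$ equals $t - 1$. Hence for $d < -8$ the class number $h(d)$ is odd precisely when $d = -q$ with $q$ a prime $\equiv 3 \pmod 4$, so for odd $h$ large enough
\[
\mc{F}(h) = \#\{q \text{ prime}, q \equiv 3 \pmod 4 : h(-q) = h\}.
\]
Dirichlet's class number formula $h(-q) = \sqrt{q}\,L(1, \chi_{-q})/\pi$ turns the condition $h(-q) = h$ into $q = \pi^2 h^2/L(1, \chi_{-q})^2$. Modeling $L(1, \chi_{-q})$ by the random Euler product $L(1, \mby)$ (justified asymptotically by Granville--Soundararajan), the continuous-density count of primes $q \equiv 3 \pmod 4$ with $h(-q) \in (h - 1/2, h + 1/2)$---using the Jacobian $|dq/dh|_L = 2\pi^2 h/L^2$, the prime density $(2\log q)^{-1}$, and averaging against the density of $L(1, \mby)$---gives
\[
\tfrac{\pi^2}{2}\, h \cdot \mbe\!\left(\frac{1}{L(1, \mby)^2 \log(\pi h / L(1, \mby))}\right).
\]

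The next step is a Cohen--Lenstra refinement. CL predicts $\Pr[H(-q)_p \simeq A] = \prod_{i \geq 1}(1 - p^{-i})/|\aut(A)|$ for each odd $p$, with independence across primes. The CL mass at $h$, $M(h) = \sum_{|G|=h} 1/|\aut(G)|$, factors by Hall's identity $\sum_A 1/|\aut(A)| = \prod_{i \geq 1}(1 - p^{-i})^{-1}$ as $M(h) = \mf{c}(h)/h$. Reweighting the count above by $M(h)$---with primes $p \mid h$ contributing the factor $\mf{c}(h)$ and the residual from primes $p \nmid h$ contributing $\prod_{p \geq 3}\prod_{i \geq 2}(1 - 1/p^i) = \mf{C}/15$---replaces the coarse prefactor $\pi^2/2$ by $(\mf{C}/15)\mf{c}(h)$, yielding the first form of \eqref{verygoodprediction}. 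The second asymptotic form follows from $\mbe[L(1, \mby)^{-2}] = \prod_p(1 + p^{-2}) = \zeta(2)/\zeta(4) = 15/\pi^2$ (immediate from $\mby(p)^2 = 1$ and independence across primes) together with the sub-polynomial tails of $L(1, \mby)$, which allow $\log(\pi h / L)$ inside the expectation to be replaced asymptotically by $\log(\pi h)$.

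The main obstacle is the Cohen--Lenstra step. CL is usually formulated one prime at a time (in the limit $|d| \to \infty$), and aggregating the $p$-part distributions into a precise density prediction for the fine-scale event $|H(-q)| = h$ is delicate: the naive CL product $\prod_{p \geq 3}\prod_{i \geq 1}(1 - 1/p^i)$ vanishes, because $\prod_{p \geq 3}(1 - 1/p) = 0$, so one must argue that the $i = 1$ factors are correctly absorbed into the $L$-value bookkeeping of the first step, leaving only the convergent $\prod_{p \geq 3}\prod_{i \geq 2}(1 - 1/p^i) = \mf{C}/15$ as the residual constant. Synthesizing this Cohen--Lenstra-type argument with a sufficiently uniform version of the Granville--Soundararajan distributional theorem for $L(1, \chi_{-q})$ at the fine scale needed here is the deepest input required.
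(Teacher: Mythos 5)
Your outline has the right architecture --- genus theory to reduce to prime discriminants, an archimedean density coming from the class number formula and the random Euler product $L(1,\mby)$, and a non-archimedean Cohen--Lenstra reweighting whose divergent $i=1$ factors must cancel against the uniform density, leaving $\prod_{\ell \geq 3}\prod_{i\geq 2}(1-\ell^{-i}) = \mf{C}/15$. This is the same factorization the paper uses. The difference in execution is that the paper does not compute a local density directly: it first proves (under GRH) the cumulative asymptotic $\sum_{h\leq H,\, h \text{ odd}}\mc{F}(h) = \tfrac12\mbe\bigl(\Li(\pi^2H^2/L(1,\mby)^2)\bigr)+O_\ve(H^2(\log H)^{-3/2+\ve})$ via moments of $L(1,\chi_q)$ over \emph{prime} discriminants (Theorem \ref{tweakedgranvillesoundprop}, which needs GRH to get square-root cancellation in $\sum_{q\in\mf{D}'(x)}\chi_q(n)$), and only then differentiates to get the flat local prediction $2G'(h)=\pi^2 h\,\mbe\bigl(L(1,\mby)^{-2}\log(\pi h/L(1,\mby))^{-1}\bigr)\sim 15h/\log(\pi h)$, before multiplying by the correction factor $\tilde{\mf{c}}(h)=\frac{\mf{C}}{15}\mf{c}(h)$. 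Anchoring the archimedean factor to a proven theorem is what fixes all the constants; your purely local computation forgoes that anchor and, as written, gets the constants wrong.

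Concretely, there are two bookkeeping gaps. First, your unit window $h(-q)\in(h-\tfrac12,h+\tfrac12)$ gives $\tfrac{\pi^2}{2}h\,\mbe(\cdots)$, but class numbers of prime discriminants are all odd, so consecutive attainable values are spaced $2$ apart and the correct flat count is $G(h)-G(h-2)\approx 2G'(h)=\pi^2h\,\mbe(\cdots)$; your flat prediction is low by a factor of $2$ (summing it over odd $h\leq H$ gives $\tfrac{15}{8}H^2/\log H$, contradicting the $\tfrac{15}{4}H^2/\log H$ of Theorem \ref{tweakedsoundprop}). Second, ``reweighting by $M(h)=\mf{c}(h)/h$'' is not a well-defined operation until you normalize it: the legitimate correction factor is the ratio $\prod_{p^n\parallel h}\prob(p^n\parallel h' : h'\text{ odd class number})/\prob(p^n\parallel h' : h'\text{ odd integer})=\frac{\mf{C}}{15}\mf{c}(h)$, which has mean one over odd $h$ (the paper verifies this in \eqref{averagestoone}) and therefore preserves the proven cumulative count. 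Your prescription ``replace $\pi^2/2$ by $\frac{\mf{C}}{15}\mf{c}(h)$'' silently rescales the total by $2/\pi^2$ and is not justified by anything you wrote. Relatedly, your closing arithmetic does not close: from $\mbe(L(1,\mby)^{-2})=15/\pi^2$ one gets $\frac{\mf{C}}{15}\mf{c}(h)\,h\cdot\frac{15/\pi^2}{\log(\pi h)}=\frac{\mf{C}\mf{c}(h)h}{\pi^2\log(\pi h)}$, which is \emph{not} the stated second form $\mf{C}\mf{c}(h)h/\log(\pi h)$; the derivation in Section \ref{sec:sound-conj} supports the second form, with the archimedean prefactor $\pi^2 h$ (equivalently $h/c_0$ with $c_0=15/\pi^2$) rather than $h\cdot\frac{15}{15}$ in front of the expectation. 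Your route can be repaired --- use a window of length $2$, or keep the unit window but normalize the Cohen--Lenstra mass per unit length, remembering that only half the integers in the window are odd --- but as written the constants only appear to match by an accident of compensating omissions.
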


Conjecture \ref{refinedsoundconjecture} is developed from the
Cohen-Lenstra heuristics together with large-scale distributional
considerations of the special value $L(1,\chi_d)$.  The former can be
viewed as a product over non-archimedean primes; the latter as an
archimedean factor --- in a sense our prediction is a ``global'' (or
adelic) generalization of the Cohen-Lenstra heuristic,  somewhat
similar to the Siegel mass formula.

More precisely, motivated by the Cohen-Lenstra heuristic we introduce a
correction factor that considers divisibility of $h$ by a random odd
positive integer (for instance a random class number is divisible by
$3$ with conjectural probability
\[
1 - \prod_{i = 1}^\infty \left( 1 - \frac{1}{3^i} \right) \approx 43 \%,
\]
and this suggests a correction factor of
$\left(1 - \prod_{i = 1}^\infty \left( 1 - \frac{1}{3^i} \right)
\right) / (1/3)$
whenever $3$ divides $h$).  We remark
that the Cohen-Lenstra heuristics have often been applied to give a
probabilistic model governing the \emph{$p$-part} of a class group, for a
\emph{fixed} prime $p$ (see for instance \cite[Section
9]{cohenlenstra}).  By contrast, the precise asymptotic predicted by
Conjecture \ref{refinedsoundconjecture} involves applying these
considerations for \emph{all} primes $p$ (including the archimedean prime).

The relevant information about the distribution of $L(1,\chi_d)$ is implicit in the following theorem, which gives the analogue of \cite[Theorem 1]{sound} averaged over odd values of $h$.
\begin{Theorem} \label{tweakedsoundprop}
Assume the Generalized Riemann Hypothesis.  Then for any $\ve > 0$, we
have
\[
\sum_{h \leq H \atop h \text{ odd}} \mc{F}(h) = \frac{15}{4} \cdot \frac{H^2}{\log H} + O\left( H^2 ( \log H)^{-3/2+\ve} \right),
\]
as $H \longrightarrow \infty$.
\end{Theorem}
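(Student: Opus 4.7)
The plan is to reduce the statement to a prime-counting problem via genus theory and Dirichlet's class number formula, then evaluate the resulting count by adapting Soundararajan's random Euler product strategy from \cite{sound}.

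First, by genus theory, for a negative fundamental discriminant $d$ the $2$-rank of $H(d)$ equals $\omega(|d|) - 1$, so $h(d)$ is odd if and only if $d \in \{-4, -8\}$ or $d = -p$ for a prime $p \equiv 3 \pmod{4}$. Combined with Dirichlet's class number formula $h(-p) = \sqrt{p}\,L(1,\chi_{-p})/\pi$ (valid for $p \geq 7$; the finitely many small cases absorb into an $O(1)$), the theorem reduces to showing
\[
\#\bigl\{\,p \equiv 3 \pmod{4} : \sqrt{p}\,L(1,\chi_{-p}) \leq \pi H \,\bigr\} = \frac{15}{4}\cdot\frac{H^2}{\log H} + O\bigl(H^2(\log H)^{-3/2+\ve}\bigr).
\]
Under GRH, $L(1,\chi_{-p}) \asymp 1$ up to factors of $(\log\log p)^{\pm 1}$, so the only contributing primes satisfy $p = H^{2 \pm o(1)}$, which fixes the main-term scale.

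Next I would model $L(1,\chi_{-p})$ by a truncated Euler product. Under GRH (à la Granville--Soundararajan), for any $B > 0$ there exists $A = A(B)$ such that
\[
L(1,\chi_{-p}) = \prod_{q \leq (\log H)^A}\!\left(1 - \frac{\chi_{-p}(q)}{q}\right)^{-1}\!\bigl(1 + O((\log H)^{-B})\bigr)
\]
uniformly for $p \leq H^3$. As $p$ varies over primes $\equiv 3 \pmod{4}$, the pattern $(\chi_{-p}(q))_{q \leq z}$ equidistributes over independent $\pm 1$ values by quadratic reciprocity and GRH-strength PNT in arithmetic progressions, so the truncated Euler product matches the law of the random variable $L(1,\mby)$ in a quantitative sense. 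Smoothing the cutoff $\sqrt{p}\,L(1,\chi_{-p}) \leq \pi H$ with smooth majorants and minorants, swapping the summation over $p$ with the (approximately random) Euler product, and applying PNT in arithmetic progressions to count primes up to $\pi^2 H^2/L(1,\mby)^2$, the main term becomes
\[
\frac{\pi^2 H^2}{4 \log H} \cdot \mbe\!\left(\frac{1}{L(1,\mby)^2}\right).
\]
Independence of the $\mby(p)$ together with $\mbe\bigl((1 - \mby(p)/p)^2\bigr) = 1 + p^{-2}$ yield
\[
\mbe\!\left(\frac{1}{L(1,\mby)^2}\right) = \prod_p\!\left(1 + \frac{1}{p^2}\right) = \frac{\zeta(2)}{\zeta(4)} = \frac{15}{\pi^2},
\]
producing the main constant $15/4$.

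The main obstacle is controlling the error at the claimed $(\log H)^{-3/2+\ve}$ level. Both the Euler product truncation and the GRH-strength PNT individually give only $(\log H)^{-A}$ savings for arbitrary $A$; the bottleneck is the smoothing of the sharp cutoff, which requires sharp $L^2$ moment bounds on the truncated Euler product (following Granville--Soundararajan) to control the boundary region where $\sqrt{p}\,L(1,\chi_{-p}) \approx \pi H$. This is the technical heart of the argument in \cite{sound}, and the exponent $-3/2+\ve$ should emerge from the standard trade-off between truncation length and moment control, adapted to the restricted summation over primes $p \equiv 3 \pmod{4}$.
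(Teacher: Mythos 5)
Your proposal is correct and follows essentially the same route as the paper: genus theory reduces the count to prime discriminants, the restriction to primes replaces Granville--Soundararajan's random model $\mathbb{X}$ (which has an atom at $0$) by the fair $\pm1$ model $\mathbb{Y}$, the constant comes from $\mbe(L(1,\mby)^{-2})=\zeta(2)/\zeta(4)=15/\pi^2$, and the $(\log H)^{-3/2+\ve}$ error arises from the smoothing of the sharp cutoff exactly as in Soundararajan's argument. The only organizational difference is that the paper packages the distributional input as a complex-moment asymptotic $\sum_{q\in\mf{D}'(x)}L(1,\chi_q)^z=|\mf{D}'(x)|\,\mbe(L(1,\mby)^z)+O_\ve(x^{1/2+\ve})$, proved via the Lagarias--Odlyzko effective Chebotarev theorem to get square-root cancellation in $\sum_q\chi_q(n)$ for non-square $n$, rather than arguing equidistribution of truncated Euler products directly.
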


\begin{remark}  
In fact, our analysis (cf. Section~\ref{heuristics}) yields the more
accurate 
approximation
\begin{equation} \label{realpred}
\begin{split}
\mc{F}(h) &\sim \frac{\mf{C}}{15} \cdot \mf{c}(h) \cdot h \cdot \mbe
\left( \frac{1}{L(1,\mby)^2 \log (\pi h / L(1,\mby))} \right) \\
&=
\mf{C} \cdot \mf{c}(h) \cdot \frac{h}{\log (\pi h)} \cdot
    \left(
      1
      +\frac{c_1}{\log(\pi h)}
      +\frac{c_2}{\log^2(\pi h)}
      +\frac{c_3}{\log^3(\pi h)}
      +o\left( \frac{1}{\log^3(\pi h)}
      \right)
    \right),
\end{split}
\end{equation}
where
\begin{equation} 
\begin{split}
c_1 &\defeq \frac{\pi^2}{15}\E{\frac{\log L(1,\mathbb Y)}{L(1,\mathbb Y)^2}} \approx -0.578, \\ 
c_2 &\defeq \frac{\pi^2}{15}\E{\frac{\log^2 L(1,\mathbb
    Y)}{L(1,\mathbb Y)^2}} \approx 0.604, \\
c_3 &\defeq \frac1{c_0}
\E{\frac{\log^3 L(1,\mathbb Y)}{L(1,\mathbb Y)^2}} \approx -0.526.
\end{split}
\end{equation}
Without this higher order expansion we have a relative error of
size $O(1/\log h)$; since we only have data for odd $h \leq 10^{6}$,
the  higher order expansion is essential to get a convincing
fit to the observed data.
\end{remark}

With the aid of a supercomputer and assuming GRH, we have
computed $\mathcal F(h)$ and $\mathcal F(G)$ for all odd $h<10^6$ and
all $p$-groups $G$ of odd size at most $10^6$. (For more details, see
Section \ref{numerics}.)  This provides us with numerical evidence in
support of Conjecture \ref{refinedsoundconjecture}.  Below we give
some samples\footnote{The complete list of computed values of
  $\mathcal F(h)$ is given in \cite{data-Fh}.}  of computed values
$\mathcal F(h)$ (conditional on the GRH) compared to the values
predicted by Conjecture \ref{refinedsoundconjecture}, rounded to the
nearest integer. We also list the relative error
$(\mathcal F(h)-\operatorname{pred}(h))/\operatorname{pred}(h)$ given
as a percentage, where
\begin{gather}
  \operatorname{pred}(h) \defeq
  \mf{C} \cdot \mf{c}(h) \cdot \frac{h}{\log (\pi h)} \cdot
    \left(
      1
      +\frac{c_1}{\log(\pi h)}
      +\frac{c_2}{\log^2(\pi h)}
      +\frac{c_3}{\log^3(\pi h)}
    \right).
\end{gather}
\begin{equation*}
\begin{array}{|c|rrrrrrrr|}
\hline
       h         &   10001 &   10003 &   10005 &   10007 &   10009 &   10011 &   10013 &   10015 \\
\hline
 \mathcal F(h)   &   10641 &   12154 &   20661 &   10536 &   10329 &   15966 &   12221 &   12975 \\
\operatorname{pred}(h) &   10598 &   12116 &   21074 &   10383 &   10385 &   16144 &   12038 &   12993 \\
\text{Relative error} & +0.41\% & +0.31\% & -1.96\% & +1.48\% & -0.54\% & -1.10\% & +1.52\% & -0.14\% \\
\hline
\hline
       h         &  100001 &  100003 &  100005 &  100007 &  100009 &  100011 &  100013 &  100015 \\
\hline
 \mathcal F(h)   &   94623 &   85792 &  164289 &   86770 &  111948 &  142512 &   87138 &  108993 \\
\operatorname{pred}(h) &   94213 &   85641 &  164806 &   86620 &  111210 &  142989 &   86577 &  108820 \\
\text{Relative error} & +0.43\% & +0.18\% & -0.31\% & +0.17\% & +0.66\% & -0.33\% & +0.65\% & +0.16\% \\
\hline
\hline
       h         &  999985 &  999987 &  999989 &  999991 &  999993 &  999995 &  999997 &  999999 \\
\hline
 \mathcal F(h)   & 1064529 & 1095135 &  771805 &  791007 & 1093645 &  914482 &  733397 & 1815672 \\
\operatorname{pred}(h) & 1063376 & 1098842 &  769673 &  788871 & 1093732 &  911447 &  730673 & 1825811 \\
\text{Relative error} & +0.11\% & -0.34\% & +0.28\% & +0.27\% & -0.01\% & +0.33\% & +0.37\% & -0.56\% \\
\hline
\end{array}
\end{equation*}
For large $h$ the prediction seems fairly good as the relative
error very often
 is smaller than $1\%$. 
 To gain further insight, we study the fluctuations in the difference
 between the observed data and the predictions, normalized by dividing
 by the square root of the prediction (it is perhaps not a priori
 obvious, but with this normalization the resulting standard deviation
 is close to one in many circumstances).  More precisely, we make a
 histogram of the values of
$$
r(h) := \frac{\mc F(h)-\operatorname{pred}(h) }{\sqrt{\operatorname{pred}(h)}}
$$
for various subsets of the (odd) integers. 
%
For notational convenience, we shall let $\mu$ and $\sigma$ denote the
mean and standard deviation, respectively, of the observed data in
each plot.  

\newcommand{\classfigzero}{
\begin{figure}[H]
  \centering
\includegraphics[width=.6\textwidth]{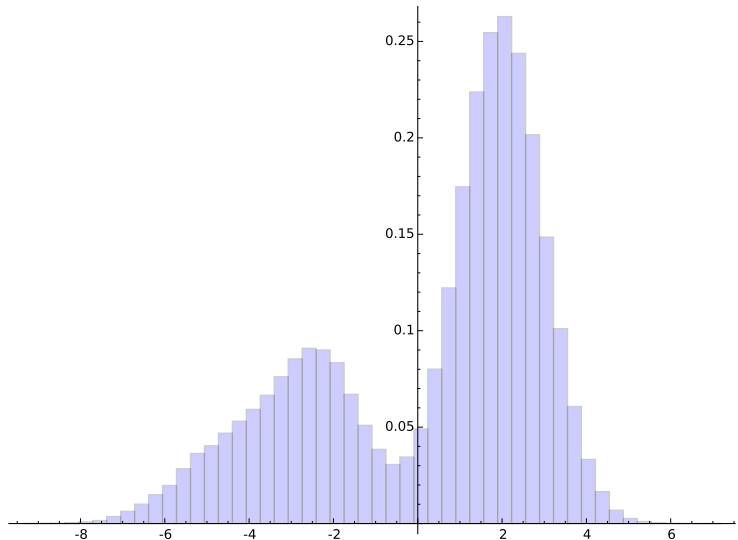}
\caption{Histogram for $r(h)$, as $h$ ranges over
  odd integers in $[500000,1000000]$. $(\mu,\sigma)=(0.291561,2.685280)$.}
\end{figure}}
\newcommand{\classfigtwo}{
\begin{figure}[H]
  \centering
\includegraphics[width=.6\textwidth]{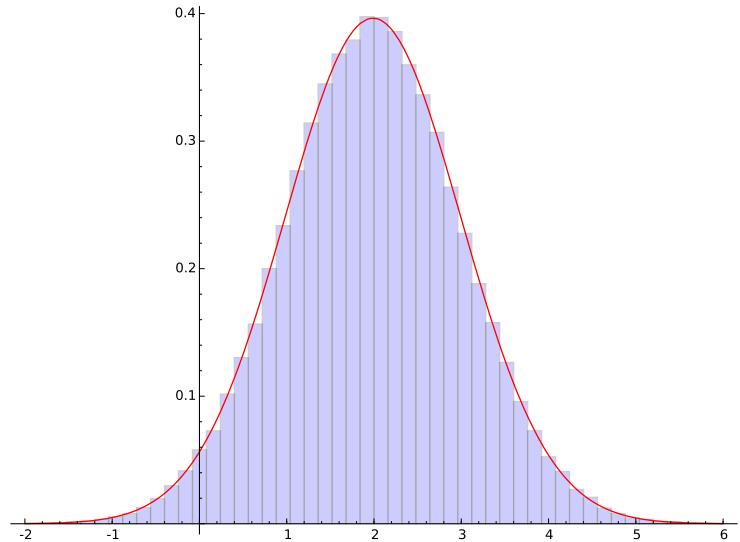}
\caption{Histogram for $r(h)$, as $h \not \equiv 0 \mod 3$ ranges over
  odd integers in $[500000,1000000]$. $(\mu,\sigma)=(1.987995,1.006428)$.}
\end{figure}}
\newcommand{\classfigone}{
\begin{figure}[H]
  \centering
\includegraphics[width=.6\textwidth]{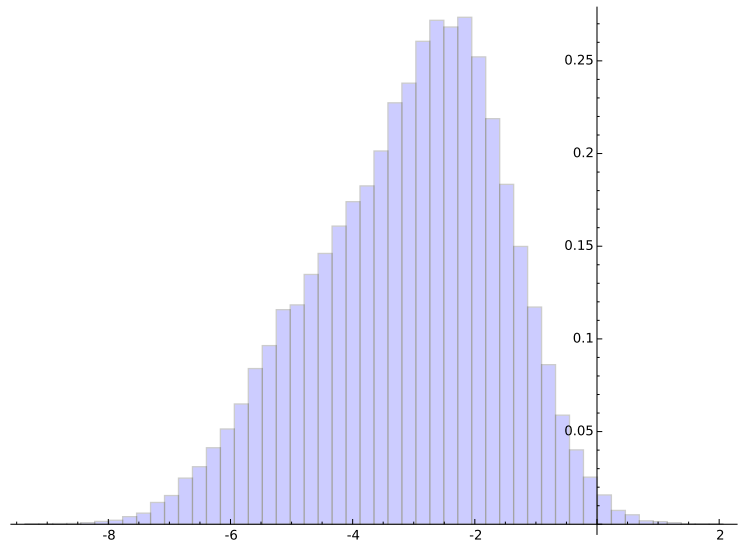}
\caption{Histogram for $r(h)$, as $h \equiv 0 \mod 3$ ranges over
  odd integers in $[500000,1000000]$. $(\mu,\sigma)=(-3.101265,1.529449)$.}
\end{figure}}
\newcommand{\classfigthree}{
\begin{figure}[H]
  \centering
\includegraphics[width=.6\textwidth]{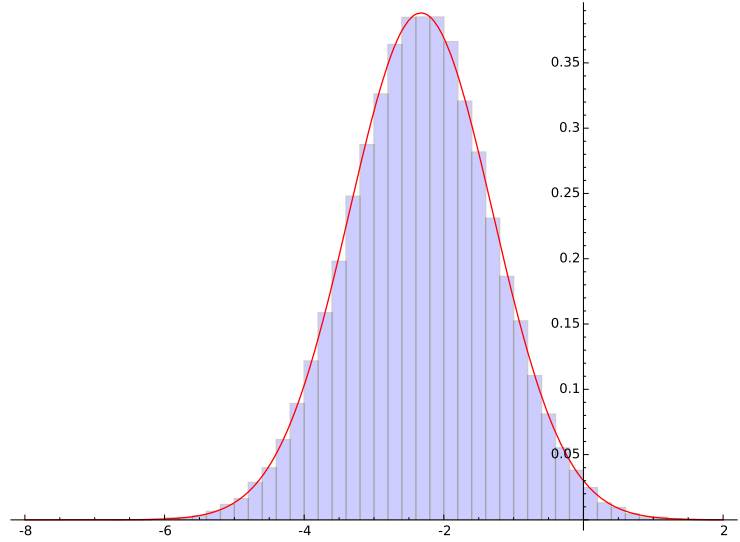}
\caption{Histogram for $r(h)$, for odd $h$ in (500000,1000000), $3 || h$.
$(\mu,\sigma)=(-2.326289,1.027387)$.}
\end{figure}}
\newcommand{\classfigfour}{
\begin{figure}[H]
  \centering
\includegraphics[width=.6\textwidth]{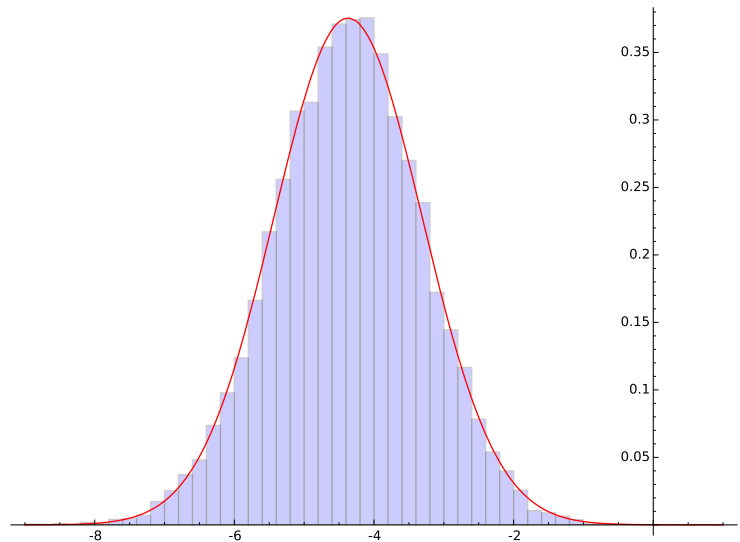}
\caption{Histogram for $r(h)$, for odd $h$ in (500000,1000000), $3^2 || h$.
$(\mu,\sigma)=(-4.372185,1.062480)$.}
\end{figure}}
\newcommand{\classfigfive}{
\begin{figure}[H]
  \centering
\includegraphics[width=.6\textwidth]{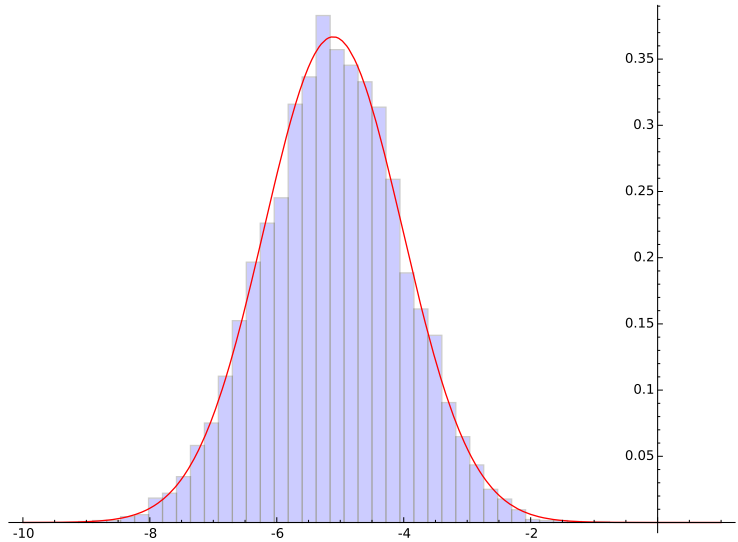}
\caption{Histogram for $r(h)$, for odd $h$ in (500000,1000000), $3^3 || h$.
$(\mu,\sigma)=(-5.110585,1.087463)$.}
\end{figure}}
\newcommand{\classfigsix}{
\begin{figure}[H]
  \centering
\includegraphics[width=.6\textwidth]{plot-6}
\caption{Histogram for $r(h)$, for odd $h$ in (500000,1000000), $3^4 || h$.
$(\mu,\sigma)=(-5.383597,1.058504)$.}
\end{figure}}
\newcommand{\classfigseven}{
\begin{figure}[H]
  \centering
\includegraphics[width=.6\textwidth]{plot-7}
\caption{Histogram for $r(h)$, for odd $h$ in (500000,1000000), $3^5 || h$.
$(\mu,\sigma)=(-5.427022,1.067687)$.}
\end{figure}}
\newcommand{\classfigeight}{
\begin{figure}[H]
  \centering
\includegraphics[width=.6\textwidth]{plot-8}
\caption{Histogram for $r(h)$, for odd $h$ in (500000,1000000), $3^6 || h$.
$(\mu,\sigma)=(-5.545782,1.082443)$.}
\end{figure}}
\newcommand{\classfigalloddtweak}{
\begin{figure}[H]
  \centering
\includegraphics[width=.6\textwidth]{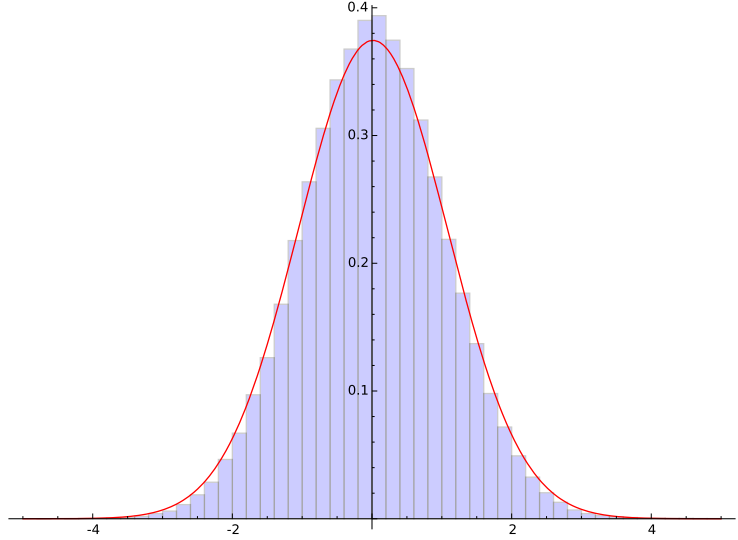}
\caption{Histogram for $r'(h)$, for all odd $h$ in (500000,1000000).
$(\mu,\sigma)=(0.013214,1.065277)$.}
\end{figure}}
\newcommand{\classfigalloddtweaktwo}{
\begin{figure}[H]
  \centering
\includegraphics[width=.6\textwidth]{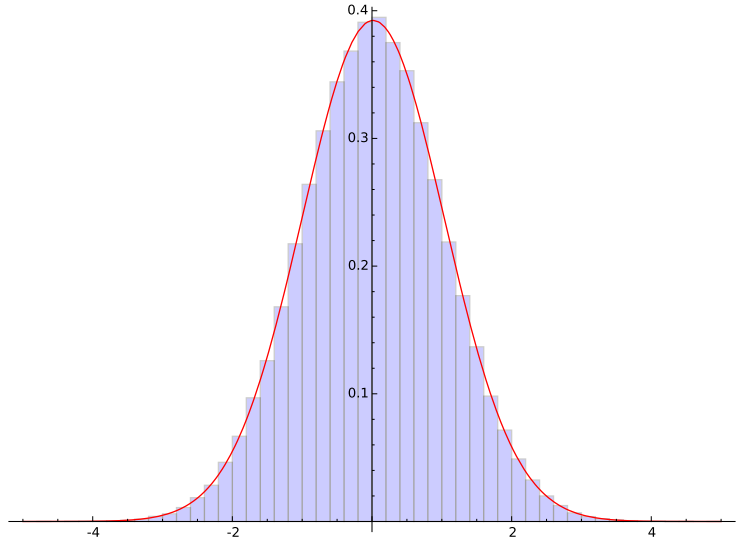}
\caption{Histogram for $r'(h)$, for all odd $h$ in (500000,1000000) and $h$ not divisible by 81.
$(\mu,\sigma)=(0.016292,1.016726)$.}
\end{figure}}
  
\classfigzero

Interestingly, the probability distribution appears to be bimodal.  A
closer inspection of the table above indicates a small positive bias
for $h$ that are divisible by three.  Separating out (odd) $h$
according to divisibility by three, or not, results in the following
two histograms:

\classfigtwo

\classfigone
The curve (red in color printouts and online) in the first plot is a
Gaussian probability density function with mean and standard deviation
fitted to the data --- the first plot appears to be Gaussian, whereas the
second clearly is not.

Also note that (after our normalization), the effect of three
divisibility is quite pronounced --- the shift in the mean value is of
order of magnitude a standard deviation.  

By further separating $h \equiv 0 \mod{ 3}$ into subsets according to
the exact power of three that divides $h$, we obtain distributions
that appear Gaussian; for comparison, we again plot a  (red) curve
giving  the
probability density function for a Gaussian 
random variable with the same mean and standard deviation as the
observed data.  (Note that there is a significant shift in the mean,
whereas the standard deviation is close to one.)

\classfigthree

\classfigfour

\classfigfive



The exact nature of this ``three divisibility bias'' is unclear, but
inspired by the slow convergence in the Davenport-Heilbronn
asymptotic\footnote{In fact, a negative second order correction to
  \eqref{3partofclassgrpasymp} of size $X^{5/6}$ was recently obtained
  by T. Taniguchi and F. Thorne
  \cite{taniguchi-thorne-secondary-term-cubic-fields} and also
  independently by M. Bhargava, A. Shankar and J. Tsimmerman
  \cite{bhargava-shankar-tsimerman-davenport-heilbronn-secondary}.} 
\begin{equation} \label{3partofclassgrpasymp}
\sum_{\substack{-X < d < 0\\ \text{$d$ fund. disc.}}} 
|H(d)[3]| \sim  C \cdot X
\end{equation}
(here $H(d)[3]$ denotes the $3$-torsion subgroup of $H(d)$) we can slightly adjust 
$\mf{c}(h)$ to remove most of this bias and obtain a more accurate
prediction $\operatorname{pred}'(h)$.  (Essentially we examine the
exact power of three divisibility of $h$ and adjust to the data, see
Section~\ref{sec:damp-three-divis} for more details.)  With this
adjustment, the fluctuations for
$$
r'(h) :=
\frac{\mc F(h)-\operatorname{pred}'(h)}{\sqrt{\operatorname{pred}'(h)}}
$$
(for the full set of odd $h$) is quite close to a Gaussian.
\classfigalloddtweak
However, compared to the fitted Gaussian, the histogram is slightly
more peaked, and has less mass in the tails.  If we remove integers
being divisible by $3^{4}$ this effect is reduced and we get an
improved fit to a Gaussian.

\classfigalloddtweaktwo

We now return to our discussion of the quantity $\mc{F}(G)$.  To make
precise what we mean by the \emph{shape} of an abelian $p$-group,
recall the bijection 
\[
\begin{split}
\left\{ \text{partitions of $n$} \right\} \; &\leftrightarrow \; \left\{ \text{abelian groups of order $p^n$} \right\}  \\
\gl = (n_1, n_2, \dots, n_r) &\mapsto G_\gl(p) := \bigoplus_{i=1}^r \mbz/p^{n_i}\mbz.
\end{split}
\]
Using \eqref{factorizationofmcFofG} in conjunction with Conjecture \ref{refinedsoundconjecture}, and evaluating each factor asymptotically, we are led to the following conjecture.  Given a partition
\[
\gl = (n_1, n_2, \dots, n_r), \quad n_1 \geq n_2 \geq \dots \geq n_r \geq 1, \quad n_1 + n_2 + \dots + n_r = n
\]
of $n$, define the \emph{cyclicity index} of $\gl$ by
\begin{equation} \label{defofcyclicityindex}
\cyc(\gl) := \sum_{i=1}^r (3-2i)n_i = n_1 - \sum_{i = 2}^r (2i-3)n_i.
\end{equation}
Note that $\cyc(\gl) \in [1-(n-1)^2, n]$ and $G_\gl(p)$ is cyclic if and only if $\cyc(\gl) = n$; thus $\cyc(\gl)$ provides a measure of how much $G_\gl(p)$ deviates from being cyclic.
\begin{Conjecture}  \label{mainconjecture}
Fix $n \in \mbn$ and a partition $\gl$ of $n$.  Then $\mc{F}(G_\gl(p)) > 0$ for infinitely many primes $p$ if and only if $\cyc(\gl) \geq 0$.
More precisely, if $\cyc(\gl) > 0$ then as $p \rightarrow \infty$ we have
\[
\mc{F}\left( G_\gl(p) \right) \sim \frac{\mf{C}}{n} \cdot \frac{p^{\cyc(\gl)}}{\log p},
\]
where $\mf{C}$ is as in \eqref{defofmfC}.  If $\cyc(\gl) = 0$ then as
$x \rightarrow \infty$ we have
\[
\sum_{p \leq x \atop p \text{ prime}} \mc{F}\left( G_\gl(p) \right) \sim \frac{\mf{C}}{n} \cdot \frac{x}{(\log x)^2}.
\]
If $\cyc(\gl) < 0$ then
\[
p \gg_\gl 1 \; \implies \; \mc{F}(G_\gl(p)) = 0.
\]
\end{Conjecture}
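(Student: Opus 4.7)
The plan is to derive this conjecture by combining the Cohen-Lenstra factorization \eqref{factorizationofmcFofG} with Conjecture \ref{refinedsoundconjecture}, both already on the table. First, setting $h = p^n$ in Conjecture \ref{refinedsoundconjecture} and observing that $\mf{c}(p^n) = \prod_{i=1}^n (1 - p^{-i})\inv \to 1$ and $\log(\pi p^n) \sim n \log p$ as $p \to \infty$, one extracts
\[
\mc{F}(p^n) \sim \frac{\mf{C}}{n} \cdot \frac{p^n}{\log p}.
\]
It then remains to analyze the Cohen-Lenstra factor $P(G_\gl(p))$ from \eqref{defofPG} asymptotically in $p$, with $n$ fixed.

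Next I would invoke the Hillar--Rhea formula for $|\aut(G_\mu(p))|$: for any partition $\mu = (n_1 \geq n_2 \geq \dots \geq n_r) \vdash n$, this order is a polynomial in $p$ whose degree equals $\sum_j m_j^2$, where $(m_1, m_2, \ldots) = \mu'$ is the conjugate partition. The classical Young-diagram identity $\sum_j m_j^2 = \sum_i (2i-1) n_i$ rewrites this degree as $\sum_i (2i-1) n_i$, and among all partitions of $n$ this quantity is minimized \emph{uniquely} by the cyclic partition $\mu = (n)$, for which $|\aut(\mbz/p^n\mbz)| = p^n - p^{n-1}$. Consequently the denominator of \eqref{defofPG} is asymptotic to $p^{-n}$ while the numerator is asymptotic to $p^{-\sum_i (2i-1) n_i}$, so
\[
P(G_\gl(p)) \sim p^{\,n - \sum_i (2i-1) n_i}
\qquad (p \to \infty).
\]

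Multiplying the two asymptotics and applying the identity
\[
2n - \sum_i (2i-1) n_i \;=\; \sum_i (3-2i) n_i \;=\; \cyc(\gl)
\]
yields the stated asymptotic $\mc{F}(G_\gl(p)) \sim \frac{\mf{C}}{n} \cdot p^{\cyc(\gl)}/\log p$ in the regime $\cyc(\gl) > 0$. For the critical case $\cyc(\gl) = 0$, the same derivation gives $\mc{F}(G_\gl(p)) \sim \frac{\mf{C}}{n \log p}$ per prime, and summation over $p \leq x$ via the prime number theorem ($\sum_{p \leq x} 1/\log p \sim x/(\log x)^2$) produces the claimed summed asymptotic. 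For $\cyc(\gl) < 0$, the heuristic value $\frac{\mf{C}}{n} \cdot p^{\cyc(\gl)}/\log p$ decays to $0$ as $p \to \infty$; since $\mc{F}(G_\gl(p))$ is a non-negative integer, the only interpretation consistent with the heuristic is that $\mc{F}(G_\gl(p)) = 0$ for all $p \gg_\gl 1$.

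The main technical inputs are the polynomial-in-$p$ structure of $|\aut(G_\mu(p))|$ (Hillar--Rhea) and the combinatorial identity $\sum_j m_j^2 = \sum_i (2i-1) n_i$; neither is hard, and correctly identifying the cyclic partition as the unique minimizer in the denominator of \eqref{defofPG} is the only place where care is really required. The genuine ``obstacle,'' however, is not technical but epistemological: the entire argument is a chain of heuristic implications resting on Cohen--Lenstra and on Conjecture \ref{refinedsoundconjecture}, so a ``proof'' here means a rigorous derivation of the asymptotic from those two prior heuristic inputs rather than an unconditional theorem. Even the weaker order-of-magnitude bound $\mc{F}(h) \gg h/\log h$ for odd $h$ is well beyond current technology, so each of the three regimes ($\cyc(\gl) > 0$, $=0$, $<0$) must remain conjectural.
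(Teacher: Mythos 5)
Your derivation is correct and follows essentially the same route as the paper (Section~\ref{heuristics}): combine \eqref{factorizationofmcFofG} with Conjecture~\ref{refinedsoundconjecture}, evaluate $P(G_\gl(p)) \sim p^{\cyc(\gl)-n}$ via the Ranum/Hillar--Rhea automorphism-count together with Lemma~\ref{sumofclprobslemma} for the denominator, and handle $\cyc(\gl)=0$ by partial summation; your conjugate-partition identity $\sum_j m_j^2 = \sum_i (2i-1)n_i$ is just another form of the paper's exponent $2n-\cyc(\gl)$. The only slight looseness is in the case $\cyc(\gl)<0$: pointwise decay of the expected value does not by itself force finitely many occurrences --- the right heuristic is that $\sum_p p^{\cyc(\gl)}/\log p$ converges since $\cyc(\gl)\leq -1$, so the expected \emph{total} number of occurrences over all large $p$ is finite (a Borel--Cantelli-type argument), though the paper is equally terse on this point.
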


\begin{Definition}
  We say that a partition $\gl$ of $n$ is
  \emph{attainable} if $\cyc(\gl) \geq 0$.
\end{Definition}
Thus, Conjecture \ref{mainconjecture} implies that $G_\gl(p)$ occurs as a class group for infinitely many primes $p$ if and only if $\gl$ is attainable.  What is the relative proportion of attainable partitions among all partitions? 
The following table suggests that the relative proportion decreases
with $n$. 
\[
\begin{array}{|c||c|c|c|c|c|c|c|c|c|c|c|} \hline n & 4 & 5 & 6 & 7 & 8 & 9 & 10 & 11 & 12 & \dots & 100 \\ 
\hline 
\hline
\# \{ \text{attainable partitions of $n$}\} & 3 & 3 & 5 & 5 & 7 & 7 & 9 & 9 & 13 & \dots & 4742 \\
\hline
\# \{ \text{partitions of $n$}\} & 5 & 7 & 11 & 15 & 22 & 30 & 42 & 56 & 77 & \dots & 190\,569\,292 \\
\hline
\text{Ratio} & 0.6 & 0.43 & 0.45 & 0.33 & 0.32 & 0.23 & 0.21 & 0.16 & 0.17 & \dots & 0.000025\\
\hline
\end{array}
\]
Our next theorem confirms this.
\begin{Theorem} \label{camstheorem}
For a positive integer $n$, we have
\[
\frac{\# \{ \text{attainable partitions of $n$} \} }{\#\{ \text{partitions of $n$} \}} \; \ll \; n^{3/4}e^{(2-\sqrt{\frac{2}{3}}\pi)\sqrt{n}}.
\]
In particular, 
\[
\lim_{n \rightarrow \infty} \frac{\# \{ \text{attainable partitions of $n$} \} }{\#\{ \text{partitions of $n$} \}} = 0.
\]
\end{Theorem}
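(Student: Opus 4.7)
The plan is to exploit the fact that attainability forces the largest part of a partition to account for at least half of $n$; once this is noted, bounding the attainable count by a partial sum of $p(k)$ reduces everything to the Hardy-Ramanujan asymptotic.

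First I would verify that every attainable $\lambda = (n_1, \ldots, n_r) \vdash n$ satisfies $n_1 \geq n/2$. Indeed, rewriting $\cyc(\lambda) \geq 0$ yields
\[
n_1 \;\geq\; \sum_{i=2}^r (2i-3)\,n_i \;\geq\; \sum_{i=2}^r n_i \;=\; n - n_1,
\]
using $2i - 3 \geq 1$ for all $i \geq 2$. Hence attainable partitions form a subset of those with largest part $j \geq \lceil n/2 \rceil$. Stratifying by $j$, the remaining parts of any such partition form an arbitrary partition of $k := n - j \leq \lfloor n/2 \rfloor \leq j$, and the ``parts $\leq j$'' constraint is automatic. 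Thus
\[
\#\{\text{attainable partitions of } n\} \;\leq\; \sum_{k=0}^{\lfloor n/2 \rfloor} p(k).
\]

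To estimate this partial sum, I would invoke the Hardy-Ramanujan asymptotic $p(m) \sim \frac{1}{4m\sqrt{3}}\, e^{\pi \sqrt{2m/3}}$. A simple comparison of consecutive ratios gives $p(m-1)/p(m) = 1 - \Theta(1/\sqrt{m})$, so the partial sum is dominated by its largest term up to a factor of order $\sqrt{n}$:
\[
\sum_{k=0}^{\lfloor n/2 \rfloor} p(k) \;\ll\; n^{-1/2}\, e^{\pi\sqrt{n/3}}.
\]
Dividing by $p(n) \gg n^{-1}\, e^{\pi\sqrt{2n/3}}$ gives
\[
\frac{\#\{\text{attainable partitions of } n\}}{\#\{\text{partitions of } n\}} \;\ll\; n^{1/2} \, e^{(\pi/\sqrt{3} \,-\, \pi\sqrt{2/3})\sqrt{n}}.
\]

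To match the advertised form I would note that $\pi/\sqrt{3} \approx 1.814 < 2$, so $e^{(2 - \pi/\sqrt{3})\sqrt{n}}$ absorbs any fixed power of $n$, and in particular the gap between $n^{1/2}$ and $n^{3/4}$, upgrading the bound to $\ll n^{3/4} e^{(2 - \pi\sqrt{2/3})\sqrt{n}}$. The only nontrivial step is the elementary observation $n_1 \geq n/2$; once that is in hand, standard partition asymptotics suffice. In fact the argument yields the slightly stronger exponent $\pi(1-\sqrt{2})/\sqrt{3} \approx -0.752$ in place of $2 - \pi\sqrt{2/3} \approx -0.565$, so the stated bound is not even sharp, but deducing the precise asymptotic of $\#\{\text{attainable } \lambda \vdash n\}$ via a generating-function/saddle-point approach is considerably more delicate and unnecessary for the claim.
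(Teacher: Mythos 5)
Your proof is correct, and it takes a genuinely different route from the paper. The paper proceeds via Sellers' generating function for partitions satisfying a linear inequality among their parts, derives a recurrence for the number $c_{n,r}$ of attainable partitions with exactly $r$ parts, proves by induction that $c_{n,r}\leq n^{r-1}/((r-1)!)^2$, and sums over $r$ to get $c_n\leq I_0(2\sqrt n)\sim e^{2\sqrt n}/\sqrt{4\pi\sqrt n}$ --- that Bessel asymptotic is where the constant $2$ in the stated exponent comes from. You instead extract the single elementary consequence $n_1\geq\sum_{i\geq 2}(2i-3)n_i\geq n-n_1$ of $\cyc(\gl)\geq 0$, so $n_1\geq n/2$, and bound the attainable count by $\sum_{k\leq n/2}p(k)\ll\sqrt n\,p(\lfloor n/2\rfloor)$; this is shorter, avoids generating functions entirely, and in fact yields the sharper exponent $\pi(1-\sqrt 2)/\sqrt 3\approx-0.751$ in place of $2-\pi\sqrt{2/3}\approx-0.565$, which implies the stated bound since the exponential gap absorbs any fixed power of $n$. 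All the individual steps check out: the stratification by the exact largest part $j\geq\lceil n/2\rceil$ is a genuine upper bound because $n-j\leq j$ makes the ``parts at most $j$'' constraint vacuous, and even the crude estimate $\sum_{k\leq n/2}p(k)\leq n\,p(\lfloor n/2\rfloor)$ would suffice given the slack in the exponent. What the paper's heavier machinery buys, and your argument does not, is the refined fixed-$r$ information (the asymptotic proportion $1/(r-1)!$ of attainable length-$r$ partitions, explicit recurrences and Rademacher-type formulas for $c_{n,r}$), which the authors use elsewhere; for the theorem as stated, your approach is both more elementary and quantitatively stronger.
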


\subsection{Numerical investigations of attainable  groups}
\label{sec:numerical-evidence}

For families of $p$-groups with $\cyc(\gl)>0$, we expect that many (if
not all) groups should occur; in fact $\mathcal F(G_\lambda(p))$
should grow with $p$.  On the other hand, there should be very few (if
any at all) in case $\cyc(\gl)<0$ --- we call these groups
``sporadic''. 

In this section, we present numerical evidence supporting Conjecture
\ref{mainconjecture} based on our computer computation of $\mathcal
F(G)$, conditional on GRH, for all $p$-groups $G$ of odd size at most
$10^6$. (See Section \ref{numerics} for details regarding the
computation.)

\subsubsection {Numerics on $\mathcal F(G_\lambda(p))$}
\label{sec:numer-mathc-fg_l}
We give in the tables below\footnote{The complete list of all $\mathcal F(G_\lambda(p))$ is given in \cite{data-FG}, and a complete list of all corresponding discriminants $d$ and groups $H(d)$ is given in \cite{data-d-Hd}.}
the value of $\mathcal F(G_\lambda(p))$ (conditional on GRH) for each
odd prime $p$ and each partition $\lambda$ of some $n\geq 3$, such
that $\abs{G_\lambda(p)}<10^6$.  To be precise: The second column in
each table contains all partitions of $n$ for some fixed $n$, ordered
by decreasing cyclicity index $\cyc(\lambda)$, which itself is given
in the leftmost column. The top row contains a list of all primes $p$
such that $p^n<10^6$, and under each $p$ we list the values of
$\mathcal F(G_\lambda(p))$ corresponding to the partition $\lambda$ in
the same row. Whenever a partition is omitted from a table, then it is
implied that all omitted values of $\mathcal F(G_\lambda(p))$ are
zero.  Groups occuring in rows corresponding to negative cyclicity
index (``sporadic groups'') are star/bold-marked for emphasis (also
see Section~\ref{sec:sporadic-groups}.)

\begin{small}
\newcommand{\twocols}[2]{{
  \begin{center}
    \renewcommand{\arraystretch}{0} 
    \begin{tabular}{cc}
      \phantom{\rule{\dimexpr 0.5\linewidth-2\tabcolsep}{0pt}} & \phantom{\rule{\dimexpr 0.5\linewidth-2\tabcolsep}{0pt}} \\ 
      $#1$ & $#2$ \\
    \end{tabular}
  \end{center}
}}

\newcommand{\cyclambda}{\cyc(\lambda)}

\begin{center}
  $
  \setlength{\arraycolsep}{3pt}
  \begin{array}{|r|l||rrrrrrrrrrrr|}
  \hline
  \cyclambda & \lambda   &  p=3 &   5 &   7 &   11 &   13 &   17 &   19 &    23 &    29 &    31 &    37 &    41  \\
  \hline
     3 & (3)       & 88 & 279 & 607 & 1856 & 2904 & 5797 & 7963 & 12958 & 24407 & 29201 & 46981 & 62327  \\
     1 & (2, 1)    &  5 &  11 &  13 &   19 &   17 &   25 &   22 &    29 &    35 &    26 &    39 &    37  \\
    -3 & (1, 1, 1) &  0 &   0 &   0 &    0 &    0 &    0 &    0 &     0 &     0 &     0 &     0 &     0  \\
  \hline
  \hline
  \cyclambda & \lambda   &     p=43 &    47 &     53 &     59 &     61 &     67 &     71 &     73 &     79 &     83 &     89 &     97 \\
  \hline
     3 & (3)       &  71617 & 91690 & 127190 & 170444 & 186988 & 242464 & 283998 & 306567 & 382770 & 438976 & 533751 & 678610 \\
     1 & (2, 1)    &     39 &    29 &     46 &     48 &     57 &     55 &     60 &     66 &     51 &     73 &     66 &     69 \\
    -3 & (1, 1, 1) &      0 &     0 &      0 &      0 &      0 &      0 &      0 &      0 &      0 &      0 &      0 &      0 \\
  \hline
  \end{array}
  $
\end{center}

\begin{center}
  $
  \setlength{\arraycolsep}{3pt}
  \begin{array}{|r|l||rrrrrrrrrr|}
  \hline
  \cyclambda & \lambda      &   p=3 &    5 &    7 &    11 &    13 &    17 &     19 &     23 &     29 &     31 \\
  \hline
     4 & (4)          & 206 & 1093 & 3404 & 16290 & 29496 & 77693 & 116710 & 233027 & 548392 & 701408 \\
     2 & (3, 1)       &  19 &   47 &   71 &   146 &   197 &   244 &    343 &    480 &    644 &    779 \\
     0 & (2, 2)       &   3 &    0 &    0 &     0 &     2 &     1 &      2 &      1 &      0 &      1 \\
    -2 & (2, 1, 1)    &   0 &    0 &    0 &     0 &     0 &     0 &      0 &      0 &      0 &      0 \\
    -8 & (1, 1, 1, 1) &   0 &    0 &    0 &     0 &     0 &     0 &      0 &      0 &      0 &      0 \\
  \hline
  \end{array}
  $
\end{center}

\twocols{
    \setlength{\arraycolsep}{3pt}
    \begin{array}{|r|l||rrrrr|}
    \hline
    \cyclambda & \lambda         &   p=3 &    5 &     7 &     11 &     13 \\
    \hline
       5 & (5)             & 549 & 4610 & 19430 & 147009 & 314328 \\
       3 & (4, 1)          &  56 &  218 &   444 &   1347 &   1894 \\
       1 & (3, 2)          &   8 &    5 &     8 &     13 &      9 \\
      -1 & (3, 1, 1)       &   0 &    {\bf 1*} &     0 &      0 &      0 \\
      -3 & (2, 2, 1)       &   0 &    0 &     0 &      0 &      0 \\
      -7 & (2, 1, 1, 1)    &   0 &    0 &     0 &      0 &      0 \\
     -15 & (1, 1, 1, 1, 1) &   0 &    0 &     0 &      0 &      0 \\
    \hline
    \end{array}
  }{
    \setlength{\arraycolsep}{3pt}
    \begin{array}{|r|l||rrr|}
    \hline
    \cyclambda & \lambda            &    p=3 &     5 &      7 \\
    \hline
       6 & (6)                & 1512 & 19469 & 116278 \\
       4 & (5, 1)             &  177 &  1024 &   2887 \\
       2 & (4, 2)             &   18 &    37 &     58 \\
       0 & (4, 1, 1)          &    0 &     3 &      0 \\
       0 & (3, 3)             &    2 &     2 &      3 \\
      -2 & (3, 2, 1)          &    0 &     0 &      0 \\
      -6 & (3, 1, 1, 1)       &    0 &     0 &      0 \\
      -6 & (2, 2, 2)          &    0 &     0 &      0 \\
      -8 & (2, 2, 1, 1)       &    0 &     0 &      0 \\
     -14 & (2, 1, 1, 1, 1)    &    0 &     0 &      0 \\
     -24 & (1, 1, 1, 1, 1, 1) &    0 &     0 &      0 \\
    \hline
    \end{array}
  }

\twocols{
    \setlength{\arraycolsep}{3pt}
    \begin{array}{|r|l||rrr|}
    \hline
    \cyclambda & \lambda               &    p=3 &     5 &      7 \\
    \hline
       7 & (7)                   & 3881 & 86038 & 711865 \\
       5 & (6, 1)                &  571 &  4259 &  17057 \\
       3 & (5, 2)                &   58 &   177 &    372 \\
       1 & (5, 1, 1)             &    7 &     7 &      6 \\
       1 & (4, 3)                &    8 &    11 &      7 \\
      -1 & (4, 2, 1)             &    {\bf 1*} &     0 &      0 \\
      -3 & (3, 3, 1)             &    {\bf 1*} &     0 &      0 \\
      -5 & (4, 1, 1, 1)          &    0 &     0 &      0 \\
      -5 & (3, 2, 2)             &    0 &     0 &      0 \\
      -7 & (3, 2, 1, 1)          &    0 &     0 &      0 \\
     -11 & (2, 2, 2, 1)          &    0 &     0 &      0 \\
     -13 & (3, 1, 1, 1, 1)       &    0 &     0 &      0 \\
     -15 & (2, 2, 1, 1, 1)       &    0 &     0 &      0 \\
     -23 & (2, 1, 1, 1, 1, 1)    &    0 &     0 &      0 \\
     -35 & (1, 1, 1, 1, 1, 1, 1) &    0 &     0 &      0 \\
    \hline
    \end{array}
  }{
    \setlength{\arraycolsep}{3pt}
    \begin{array}{|r|l||rr|}
    \hline
    \cyclambda & \lambda                  &     p=3 &      5 \\
    \hline
       8 & (8)                      & 10712 & 379751 \\
       6 & (7, 1)                   &  1585 &  18956 \\
       4 & (6, 2)                   &   180 &    719 \\
       2 & (6, 1, 1)                &    18 &     30 \\
       2 & (5, 3)                   &    15 &     24 \\
       0 & (5, 2, 1)                &     4 &      1 \\
       0 & (4, 4)                   &     2 &      0 \\
      -2 & (4, 3, 1)                &    {\bf 1*} &      0 \\
      -4 & (5, 1, 1, 1)             &     0 &      0 \\
     \vdots & \vdots    &     \vdots &      \vdots \\
     -48 & (1, 1, 1, 1, 1, 1, 1, 1) &     0 &      0 \\
    \hline
    \end{array}
  }

\twocols{
    \setlength{\arraycolsep}{3pt}
    \begin{array}{|r|l||r|}
    \hline
    \cyclambda & \lambda                     &     p=3 \\
    \hline
       9 & (9)                         & 28308 \\
       7 & (8, 1)                      &  4516 \\
       5 & (7, 2)                      &   454 \\
       3 & (7, 1, 1)                   &    42 \\
       3 & (6, 3)                      &    54 \\
       1 & (6, 2, 1)                   &    10 \\
       1 & (5, 4)                      &     4 \\
      -1 & (5, 3, 1)                   &     {\bf 1*} \\
      -3 & (6, 1, 1, 1)                &     0 \\
     \vdots & \vdots    &     \vdots  \\
     -63 & (1, 1, 1, 1, 1, 1, 1, 1, 1) &     0 \\
    \hline
    \end{array}
  }{
    \setlength{\arraycolsep}{3pt}
    \begin{array}{|r|l||r|}
    \hline
    \cyclambda & \lambda                        &     p=3 \\
    \hline
      10 & (10)                           & 78657 \\
       8 & (9, 1)                         & 12433 \\
       6 & (8, 2)                         &  1446 \\
       4 & (8, 1, 1)                      &   160 \\
       4 & (7, 3)                         &   167 \\
       2 & (7, 2, 1)                      &    16 \\
       2 & (6, 4)                         &    14 \\
       0 & (6, 3, 1)                      &     1 \\
       0 & (5, 5)                         &     0 \\
     \vdots & \vdots    &     \vdots  \\
     -80 & (1, 1, 1, 1, 1, 1, 1, 1, 1, 1) &     0 \\
    \hline
    \end{array}
  }

\twocols{
    \setlength{\arraycolsep}{3pt}
    \begin{array}{|r|l||r|}
    \hline
    \cyclambda & \lambda                           &      p=3 \\
    \hline
      11 & (11)                              & 216520 \\
       9 & (10, 1)                           &  35544 \\
       7 & (9, 2)                            &   3880 \\
       5 & (9, 1, 1)                         &    437 \\
       5 & (8, 3)                            &    460 \\
       3 & (8, 2, 1)                         &     58 \\
       3 & (7, 4)                            &     49 \\
       1 & (7, 3, 1)                         &     10 \\
       1 & (6, 5)                            &      9 \\
      -1 & (8, 1, 1, 1)                      &      0 \\
      -1 & (7, 2, 2)                         &     {\bf 1*} \\
      -1 & (6, 4, 1)                         &     {\bf 1*} \\
      -3 & (7, 2, 1, 1)                      &      0 \\
     \vdots & \vdots    &     \vdots  \\
     -99 & (1, 1, 1, 1, 1, 1, 1, 1, 1, 1, 1) &      0 \\
    \hline
    \end{array}
  }{
    \setlength{\arraycolsep}{3pt}
    \begin{array}{|r|l||r|}
    \hline
    \cyclambda & \lambda                              &      p=3 \\
    \hline
      12 & (12)                                 & 603525 \\
      10 & (11, 1)                              &  98421 \\
       8 & (10, 2)                              &  10988 \\
       6 & (10, 1, 1)                           &   1291 \\
       6 & (9, 3)                               &   1265 \\
       4 & (9, 2, 1)                            &    220 \\
       4 & (8, 4)                               &    133 \\
       2 & (8, 3, 1)                            &     26 \\
       2 & (7, 5)                               &     17 \\
       0 & (9, 1, 1, 1)                         &      2 \\
       0 & (8, 2, 2)                            &      1 \\
       0 & (7, 4, 1)                            &      1 \\
       0 & (6, 6)                               &      2 \\
      -2 & (8, 2, 1, 1)                         &     {\bf 1*} \\
      -2 & (7, 3, 2)                            &      0 \\
     \vdots & \vdots    &     \vdots  \\
    -120 & (1, 1, 1, 1, 1, 1, 1, 1, 1, 1, 1, 1) &      0 \\
    \hline
    \end{array}
  }
\end{small}

Below we plot, for $p$ ranging over {\em odd primes}, observed values
$\mathcal F(G_\lambda(p))$ (black dots) versus predicted values
$P(G_\lambda(p))\cdot \operatorname{pred}(\abs{G_\lambda(p)})$ (red
dashed lines) for various partitions $\lambda$ with positive cyclicity
index $\cyc(\lambda)>0$.

{
\noindent
\setlength{\tabcolsep}{0pt}
\begin{tabular}{cc}
  \includegraphics[width=.5\textwidth]{partitions_of_2} &
  \includegraphics[width=.5\textwidth]{partitions_of_3} \\
  Partitions of 2 & Partitions of 3
\end{tabular}
}

{
\noindent
\setlength{\tabcolsep}{0pt}
\begin{tabular}{cc}
  \includegraphics[width=.5\textwidth]{partitions_of_4} &
  \includegraphics[width=.5\textwidth]{partitions_of_5} \\
  Partitions of 4 & Partitions of 5
\end{tabular}
}

We remark that each vanishing entry in the tables above
corresponds to a ``missing''
group. In particular we see that the group $(\Z/p\Z)^3$ does not
appear as the class group of a quadratic imaginary field for any prime
$2<p<100$.  Based on a combination of heuristics and numerics, it is 
reasonable to conjecture that $(\mbz/p\mbz)^n$ does not occur
for any odd prime $p$ and any $n \geq 3$.

\begin{Conjecture}\label{NoElemAb}
For $p$ odd, there are no elementary abelian $p$-groups of rank at
least 3 which occur as the class group of an imaginary quadratic
field.
\end{Conjecture}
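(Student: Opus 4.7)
The plan is to combine the paper's numerical data with a tail estimate derived from Conjecture \ref{mainconjecture}. The partition corresponding to $(\mbz/p\mbz)^n$ is $\lambda = (1,1,\ldots,1)$ with $n$ parts, for which
\[
\cyc(\lambda) \;=\; \sum_{i=1}^{n} (3 - 2i) \;=\; n(2-n) \;\leq\; -3 \quad\text{whenever } n \geq 3.
\]
Thus $(1^n)$ is \emph{sporadic} in the terminology of Section \ref{sec:numerical-evidence}, and Conjecture \ref{mainconjecture} already predicts $\mc{F}((\mbz/p\mbz)^n) = 0$ for $p \gg_n 1$; the task is to strengthen this to \emph{all} odd $p$.

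First I would invoke the tables of Section \ref{sec:numer-mathc-fg_l}, which dispose (conditional on GRH) of every $(\mbz/p\mbz)^n$ with $p^n \leq 10^6$: no examples occur for $p \leq 97$ when $n=3$, for $p \leq 31$ when $n=4$, and so on. For the residual large-prime ranges, combining \eqref{factorizationofmcFofG} with $|\aut((\mbz/p\mbz)^n)| = |\GL_n(\mbf_p)| \asymp p^{n^2}$ and Conjecture \ref{refinedsoundconjecture} yields the explicit heuristic bound
\[
\mc{F}\!\left((\mbz/p\mbz)^n\right) \;\ll\; \frac{p^{n(2-n)}}{\log p}.
\]
Summing this over primes beyond the computed range and over $n \geq 3$ gives an expected total count far below $1$; the dominant piece is $\sum_{p > 97} p^{-3}/\log p$, of order $10^{-5}$. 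Combined with the small-prime verification, this constitutes essentially decisive heuristic evidence.

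The hard part is converting the heuristic into a theorem, i.e.\ excluding even a single sporadic example at some individual large prime. That would require an unconditional upper bound on the $p$-rank of $H(-q)$ valid for all primes $q \equiv 3 \pmod 4$ and all odd $p$ --- a uniform statement apparently out of reach even under GRH, since existing rank bounds (in the spirit of Ellenberg--Venkatesh or Heath-Brown) fall well short of ruling out $p$-rank $\geq 3$. A possible line of attack for $p=3$, $n=3$ is Scholz's reflection principle, coupling the $3$-rank of $H(-d)$ to that of the real quadratic field $H(d)$ and feeding this into the Davenport--Heilbronn asymptotic \eqref{3partofclassgrpasymp} together with the secondary terms of \cite{taniguchi-thorne-secondary-term-cubic-fields, bhargava-shankar-tsimerman-davenport-heilbronn-secondary}; but passing from ``rare'' to ``never'' is a genuine obstacle, and the cases $p \geq 5$ or $n \geq 4$ appear to demand substantially new ideas.
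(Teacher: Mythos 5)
Your proposal matches the paper's own justification essentially step for step: the paper likewise combines the GRH-conditional numerical verification for $p^n \leq 10^6$ with the heuristic expected count $\mf{C}\sum_{p,\,n\geq 3,\; p^n>10^6} \mf{c}(p^n)/(n\, p^{n^2-2n}\log p)$ coming from \eqref{factorizationofmcFofG} and Conjecture \ref{refinedsoundconjecture}, bounding it by roughly $10^{-4}$ (note $p^{\cyc(1^n)} = p^{n(2-n)} = p^{-(n^2-2n)}$, exactly your exponent). Your closing remarks on the obstruction to an unconditional proof go beyond what the paper attempts, but are consistent with its treatment of the statement as a heuristic conjecture rather than a theorem.
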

Indeed, by \eqref{factorizationofmcFofG} and Conjecture \ref{refinedsoundconjecture}, together with the observed (GRH-conditional) fact that no $(\mbz/p\mbz)^n$ occurs as an imaginary quadratic class group for $p^n \leq 10^6$, we may bound the expected number
of counterexamples by
\[
\mf{C} \sum_{{\begin{substack} { p, n \geq 3  \\  p^n > 10^6 } \end{substack}}} \frac{\mf{c}(p^n)}{n p^{n^2 - 2n} \log p} \; \leq \; \mf{C} \cdot \prod_{i=1}^\infty \left( 1 - \frac{1}{2^i} \right)^{-1}  \sum_{{\begin{substack} { p, n \geq 3 \\  p^n > 10^6 } \end{substack}}} \frac{1}{n p^{n^2 - 2n} \log p}.
\]
Since the right-hand sum can then be bounded by $10^{-4}$, Conjecture
\ref{NoElemAb} is heuristically justified. 

Finally, we observe that none of the groups $G_\lambda(p)$ of
odd size $<10^6$ with $\cyc(\lambda)>0$ are missing. 

\subsubsection{Sporadic groups in negative cyclicity
  index case} 
\label{sec:sporadic-groups}
As just indicated with bold/star-marks in the tables, each of the
groups
\[
\begin{split}
\frac{\mbz}{5^3\mbz} \times \left( \frac{\mbz}{5\mbz} \right)^2, \quad\quad &\frac{\mbz}{3^4\mbz} \times \frac{\mbz}{3^2\mbz} \times \frac{\mbz}{3\mbz}, \quad\quad \left( \frac{\mbz}{3^3\mbz} \right)^2 \times \frac{\mbz}{3\mbz}, \\
\frac{\mbz}{3^4\mbz} \times \frac{\mbz}{3^3\mbz} \times \frac{\mbz}{3\mbz}, \quad\quad &\frac{\mbz}{3^5\mbz} \times \frac{\mbz}{3^3\mbz} \times \frac{\mbz}{3\mbz}, \quad\quad \frac{\mbz}{3^7\mbz} \times \left( \frac{\mbz}{3^2\mbz} \right)^2, \\
\frac{\mbz}{3^6\mbz} \times \frac{\mbz}{3^4\mbz} \times \frac{\mbz}{3\mbz}, \quad\quad &\frac{\mbz}{3^8\mbz} \times \frac{\mbz}{3^2\mbz} \times \left( \frac{\mbz}{3\mbz} \right)^2
\end{split}
\]
occurs exactly once as an imaginary quadratic class group, even
though $\cyc(\gl) < 0$ for each corresponding partition $\gl$.  From
the point of view of Conjecture \ref{mainconjecture}, these examples
may be regarded as ``sporadic,'' since conjecturally they do not
belong to an infinite family.

\subsubsection {Zero cyclicity index --- the family $\mathcal F((\Z/p\Z)^2)$}
The case of  $\cyc(\gl) = 0$ is intermediate in the sense that
infinitely many groups in the family should occur, and infinitely many
should not.  Here the data is quite limited, and we restrict ourselves
to the family $G=(\Z/p\Z)^2$.
The following table contains all odd primes $p$ such that $p^2<10^6$,
grouped according to the value of $\mathcal F((\Z/p\Z)^2)$, assuming
GRH. 
\begin{center}
\begin{tabular}{|r|p{13cm}|}
  \hline
  $n$ & All primes $p<1000$ such that $\mathcal F((\Z/p\Z)^2)=n$ \\
  \hline
  \hline
  0 & 11, 19, 37, 79, 89, 97, 103, 139, 151, 167, 181, 191, 193, 227, 229, 233, 241, 251, 271, 281, 283, 311, 313, 317, 349, 353, 359, 383, 401, 409, 433, 443, 463, 467, 479, 491, 499, 523, 563, 571, 587, 601, 619, 631, 643, 673, 701, 709, 733, 757, 769, 787, 809, 829, 877, 887, 907, 919, 929, 947, 953, 977, 983 \\
  \hline
  1 & 3, 17, 23, 41, 43, 47, 61, 67, 73, 107, 109, 113, 127, 131, 137, 157, 163, 173, 179, 199, 239, 257, 263, 269, 277, 293, 307, 331, 337, 347, 367, 373, 379, 397, 419, 439, 457, 487, 503, 509, 521, 547, 557, 577, 599, 613, 617, 641, 653, 659, 677, 683, 691, 719, 727, 739, 743, 761, 797, 811, 821, 823, 839, 853, 857, 859, 863, 881, 937, 941, 971, 991, 997 \\
  \hline
  2 & 5, 7, 29, 31, 53, 59, 71, 83, 101, 197, 211, 223, 389, 431, 449, 461, 569, 593, 607, 647, 661, 827, 883, 911 \\
  \hline
  3 & 149, 421, 541, 751, 967 \\
  \hline
  4 & 773 \\
  \hline
  5 & 13 \\
  \hline
\end{tabular}
\end{center}
The  limited data  seems to support intermediate behaviour.

One may ask how well our prediction of $\mathcal F(G)$, using equation
\eqref{factorizationofmcFofG}, holds up. The following graph compares
the cumulants of the predictions with the observations.

\begin{figure}[H]
  \centering
\includegraphics[width=.6\textwidth]{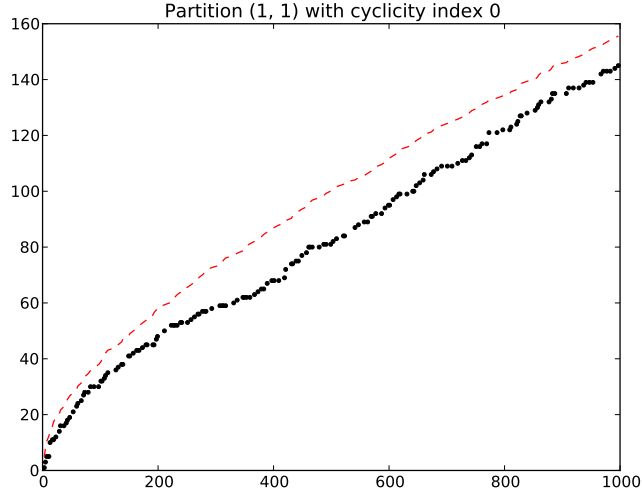}
\caption{
    {\em Cumulative} observed values $\sum_{p<x}\mathcal F(G_{(1,1)}(p))$ (black dots) compared to
    {\em cumulative} predicted values $\sum_{p<x}P(G_{(1,1)}(p))\operatorname{pred}(p^2)$ (red dashed line), for each prime $x<1000$.}
\end{figure}

\subsection{Related work}
\label{sec:discussion}

Certain classes of finite abelian groups are already known {\em not} to occur as imaginary quadratic class
groups.  For instance, letting $H(d)[n]$ denote the $n$-torsion
subgroup of $H(d)$, it is known that
$$
|H(d)[2]| \ll |H(d)|^{o(1)}
$$
(this is essentially genus theory together with Siegel's lower bound
on the class number; if $H(d)$ has two rank $r$, then $d$ has at least
$r-1$ distinct prime factors).  In particular, for any fixed $\epsilon > 0$ there are only finitely many imaginary quadratic class groups $H(d)$ satisfying $|H(d)[2]| \gg |H(d)|^{\epsilon}$.  Weaker bounds are known for the size of the
three torsion part; in
\cite{ellenberg-venkatesh-class-group-torsion-bounds} Venkatesh and
Ellenberg (improving on Helfgott and Venkatesh
\cite{helfgott-venkatesh-class-group-three-torsion} and 
Pierce \cite{pierce-three-torsion-bound}) 
show that 
$$
|H(d)[3]| \ll |d|^{1/3+\epsilon}.
$$
From this and the (GRH-conditional) lower bound $d^{1/2} \ll |H(d)|$, one sees that, for any $\epsilon > 0$ there are only finitely many imaginary quadratic class groups $H(d)$ satisfying $|H(d)[3]| \gg |H(d)|^{2/3-\epsilon}$.

The problem of realizing a given abelian group as an imaginary quadratic class group may be viewed in the context of the following broader questions.
\begin{question}
Given a finite abelian group $G$, does there exist a number field $K$ for which the ideal class group of $K$ is isomorphic to $G$?
\end{question}
The answer to this problem is believed to be yes (one ought to be able to take $K$ to be a real quadratic extension of $\mbq$) but the problem is open in general, in spite of various partial results.  G. Cornell \cite{cornell} proved that every finite abelian group occurs as a
subgroup of the ideal class group of some cyclotomic
field, and Y. Yamamoto \cite{yamamoto} proved that, for any $n \geq 1$, there are infinitely many imaginary quadratic fields whose class group contains $(\mbz/n\mbz)^2$ as a subgroup.  We note that Ozaki \cite{ozaki-p-groups} has shown that any (possibly
non-abelian) $p$-group occurs as the maximal unramified $p$-extension
of some number field $F$. 

Further broadening our perspective, we may also ask:
\begin{question}
Given an abelian group $G$, does there exist a Dedekind domain $D$ for which the ideal class group of $D$ is isomorphic to $G$?
\end{question}
In \cite{claborn}, Claborn answered this question in the affirmative; Leedham-Green subsequently showed that the Dedekind domain $D$ can be taken to be a quadratic extension of a
principal ideal ring.

Finally, we remark that the Cohen-Lenstra heuristics apply to a broader class
of situations where finite abelian groups arise as co-kernels of
random sub-lattices of $\mbz^n$.  For instance, \cite{davidsmith}
contains average results on the group of $\mbz/p\mbz$-rational points
of an elliptic curve which are consistent with the Cohen-Lenstra
heuristics (of course the rank can be at most two in this setting),
and (in much the same spirit as our present consideration of missing
class groups) \cite{bankspappalardishparlinski} considers the question
of which finite rank $2$ abelian groups occur as the group of
$\mbz/p\mbz$-rational points of some elliptic curve $E$ over
$\mbz/p\mbz$.

\subsection{Outline of the paper}
\label{sec:outline-paper}

The organization is as follows:  Section \ref{preliminaries} covers
the preliminary material on Cohen-Lenstra heuristics and the
distribution of $L(1,\chi_d)$.  In Section
\ref{tweakedsoundpropproofsection}, we prove Theorem
\ref{tweakedsoundprop}. In Section \ref{heuristics}, we develop
heuristics which lead to Conjectures \ref{refinedsoundconjecture} and
\ref{mainconjecture}.  In Section \ref{partitions}, we discuss
partition generating functions and give a proof of Theorem
\ref{camstheorem}. In Section \ref{numerics}, we sketch the
techniques used to obtain the numerical evidence.

\subsection{Acknowledgements}
\label{sec:thanks}

{P.K. was partially supported by grants from the G\"oran Gustafsson
  Foundation for Research in Natural Sciences and Medicine, and the
  Swedish Research Council (621-2011-5498).}
{S.H. was partially supported by a grant from the
  Swedish Research Council (621-2011-5498).}
{K.P. was partially supported by Simons Foundation grant \# 209266.}
The computations were performed on resources provided by the Swedish National Infrastructure for Computing (SNIC) at PDC Centre for High Performance Computing (PDC-HPC).
We thank the PDC support (Magnus Helmersson,
  Jonathan Vincent,
  Radovan Bast,
  Peter Gille,
  Jin Gong,
  Mattias Claesson) for their assistance 
concerning technical and implementational aspects 
in making the code run on the PDC-HPC resources.

We would like to thank Andrew Booker, Pete Clark, Henri Cohen, Noam
Elkies, Farshid Hajir, Hendrik W. Lenstra, Steve Lester and Peter
Sarnak for enlightening conversations on the topic.



\section{Preliminaries} \label{preliminaries}

In this section, we briefly review relevant background material.  

\subsection{Cohen-Lenstra heuristics}

When a finite abelian $p$-group $G$ occurs in nature, its likelihood
of occurrence is often found to be proportional to $1/| \aut(G)|$.
This suggests constructing a discrete probability measure $\mu$ on 
\[
\mf{G}_p := \{ \text{isomorphism classes of abelian $p$-groups} \}
\]
by setting
$
\ds \mu(\{G\}) := \frac{c}{| \aut(G) |}
$
for an appropriate positive constant $c$, if possible.  The following
lemma shows that this indeed the case, and is also
useful for evaluating $c$. 
\begin{lemma} \label{sumofclprobslemma}
We have that
\[
\begin{split}
\sum_{G \in \mf{G}_p \atop |G| = p^n} \frac{1}{| \aut(G) |} &= \frac{1}{p^n} \prod_{i = 1}^n \left( 1 - \frac{1}{p^i} \right)^{-1}, \\
\sum_{G \in \mf{G}_p \atop |G| \leq p^n} \frac{1}{| \aut(G) |} &= \prod_{i = 1}^n \left( 1 - \frac{1}{p^i} \right)^{-1}.
\end{split}
\]
\end{lemma}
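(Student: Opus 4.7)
Plan. Parameterize finite abelian $p$-groups of order $p^n$ by partitions $\lambda \vdash n$ via the bijection $\lambda \mapsto G_\lambda(p) := \bigoplus_i \mbz/p^{\lambda_i}\mbz$, and write $f(n)$ for the sum on the left of the first identity.

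The strategy is to prove the first identity and deduce the second by induction on $n$. For the second identity, the base case $n = 0$ is trivial, and the inductive step reduces to the one-line algebraic check
\[
\prod_{i=1}^n (1-p^{-i})^{-1} - \prod_{i=1}^{n-1}(1-p^{-i})^{-1} = \prod_{i=1}^{n-1}(1-p^{-i})^{-1} \cdot \frac{p^{-n}}{1-p^{-n}} = \frac{p^{-n}}{\prod_{i=1}^n (1-p^{-i})},
\]
which matches $f(n)$ assuming the first identity.

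For the first identity, the plan is to form the generating function $\Phi(y) := \sum_{n \geq 0} f(n)\, y^n$ and verify that $\Phi(y) = \prod_{k \geq 1}(1 - y/p^k)^{-1}$. Euler's $q$-series identity $\sum_{n \geq 0} x^n / (q;q)_n = \prod_{k \geq 0}(1 - xq^k)^{-1}$, applied with $q = 1/p$ and $x = y/p$, rewrites this Euler product as $\sum_n \bigl( p^{-n} / \prod_{i=1}^n (1-p^{-i}) \bigr) y^n$; extracting the coefficient of $y^n$ then yields the claimed closed form for $f(n)$. To establish the generating-function identity itself, I would invoke Hall's classical formula (see, e.g., Macdonald's \emph{Symmetric Functions and Hall Polynomials}) for $|\aut(G_\lambda(p))|$, which expresses it as a product indexed by the multiplicities $m_k(\lambda)$ of parts of $\lambda$ equal to $k$. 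Summing $y^{|\lambda|}/|\aut(G_\lambda(p))|$ over all $\lambda$, reorganized by the multiplicity vector $(m_1, m_2, \ldots)$, decouples into an independent sum over each $m_k \geq 0$, and each such factor telescopes to $(1 - y/p^k)^{-1}$ via a standard $q$-series manipulation.

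The main obstacle is the bookkeeping in Hall's formula: one must carefully isolate the $k$-dependence so as to factor the multiple sum into a product of geometric-type series. A clean alternative that bypasses Hall's formula is the probabilistic route: the cokernel of a random $N \times N$ matrix over $\mbz_p$ (with Haar measure) is known to be distributed according to the Cohen-Lenstra measure on abelian $p$-groups, and letting $N \to \infty$ together with a normalization computation recovers the stated formulas directly.
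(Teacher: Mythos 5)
Your handling of the second identity (induction on $n$ via the telescoping computation) is exactly what the paper does. For the first identity the paper offers no proof at all --- it simply cites Cohen--Lenstra, Cor.~3.8 --- so your attempt to actually prove it goes beyond the source. The generating-function identity you aim for, $\sum_{\lambda} y^{|\lambda|}/|\aut(G_\lambda(p))| = \prod_{k\ge 1}(1-y/p^k)^{-1}$, is true, and your Euler $q$-series step extracting the coefficient of $y^n$ from the right-hand side is correct.

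The gap is in the mechanism you propose for proving that identity. Reorganizing the sum by the multiplicity vector $(m_1,m_2,\dots)$ does \emph{not} decouple it into independent sums over each $m_k$ with the $k$-th factor equal to $(1-y/p^k)^{-1}$, for two reasons. First, Hall's formula reads $|\aut(G_\lambda(p))| = p^{\sum_j (\lambda'_j)^2}\prod_k \prod_{i=1}^{m_k}\left(1-p^{-i}\right)$, and since $\lambda'_j=\sum_{k\ge j}m_k$ the exponent $\sum_j(\lambda'_j)^2$ contains cross terms $m_k m_l$ with $k\neq l$; hence $1/|\aut(G_\lambda(p))|$ is not a product over $k$ of functions of $m_k$ alone and the multiple sum does not factor. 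Second, and decisively: if the sum did factor with the $k$-th factor collecting the parts of size $k$, that factor would be a power series in $y^{k}$ (each part of size $k$ contributes $y^{k}$ to $y^{|\lambda|}$), whereas $(1-y/p^k)^{-1}=\sum_{m\ge0}y^{m}p^{-km}$ contains every power of $y$. So the factor-by-factor identification you assert is impossible for $k\ge 2$; the identity is a genuine Durfee-square/Hall--Littlewood-type $q$-series evaluation, not a decoupling.

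The alternative you mention in one sentence is the route that actually works, and is essentially Cohen--Lenstra's own argument; I would promote it to the main proof. Count sublattices $L\subseteq \mbz_p^N$ of index $p^n$ in two ways: on one hand their number is the coefficient of $u^n$ in $\prod_{i=0}^{N-1}(1-p^iu)^{-1}$, namely the Gaussian binomial $\binom{N+n-1}{n}_p=\prod_{j=1}^n\frac{1-p^{N+j-1}}{1-p^{j}}$; on the other hand it equals $\sum_{\lambda\vdash n}\#\mathrm{Sur}(\mbz_p^N,G_\lambda(p))/|\aut(G_\lambda(p))|$, where $\#\mathrm{Sur}(\mbz_p^N,G)=|G|^N\prod_{i=1}^{\ell(\lambda)}(1-p^{i-1-N})$. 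Dividing both expressions by $p^{nN}$ and letting $N\to\infty$ (the sum over $\lambda\vdash n$ is finite, so the limit passes through) yields precisely the first identity. Either supply this argument or, as the paper does, cite Cohen--Lenstra directly.
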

\begin{proof}
The first equation is \cite[Cor 3.8, p. 40]{cohenlenstra}; the second follows from the first by induction on $n$.
\end{proof}

Let us set
\begin{equation} \label{defofetainftyofp}
\eta_\infty(p) := \prod_{i=1}^\infty \left( 1 - \frac{1}{p^i} \right).
\end{equation}
By taking $n \longrightarrow \infty$ in Lemma \ref{sumofclprobslemma},
we see that one must take $c = \eta_\infty(p)$ in order for
$\mu(\mf{G}_p) = 1$.  In the Cohen-Lenstra model, the
probability of $G$ occurring as the $p$-part of a class group is thus given by
\begin{equation} \label{cohenlenstrameasure}
\mu(\{G\}) := \frac{\eta_\infty(p)}{| \aut(G) |}.
\end{equation}
Lemma \ref{sumofclprobslemma} also has the following useful corollary.  Here and later in the paper, we will also make use of the notation
\[
\begin{split}
\mf{D} &:= \{ \text{negative fundamental discriminants} \}, \\
\mf{D}(x) &:= \{ d \in \mf{D} : \; -d \leq x \}, \\
\mf{D}' &:= \{ q \in \mf{D} : \; -q \text{ is prime} \},\\
\mf{D}'(x) &:= \{ q \in \mf{D}' : \; -q \leq x \}.
\end{split}
\]
Recall that by genus theory, we have 
\[
h(d) \text{ is odd} \; \Longleftrightarrow \; -d \text{ is prime}
\]
for $d \in \mf{D}$ with $d < -8$.  This observation explains the following notation, wherein $P$ denotes any property of positive odd integers.
\begin{equation} \label{defofconditionalprob}
\prob(\text{$h$ satisfies $P$} : \; \text{$h$ is an odd class number} ) := \lim_{x \rightarrow \infty} \frac{\# \{ q \in \mf{D}'(x) : \; \text{$h(q)$ satisfies $P$}\}}{\# \mf{D}'(x) } .
\end{equation}
\begin{cor} \label{clcorollary}
Assuming the Cohen-Lenstra heuristics, for any $n \geq 0$ we have
\begin{equation*} 
\begin{split}
\prob(p^n \nmid h : \; \text{$h$ is an odd class number} ) &= \prod_{i=n}^{\infty} \left( 1 - \frac{1}{p^i} \right) \\
\prob(p^n \parallel h : \; \text{$h$ is an odd class number}) &= \frac{1}{p^n} \prod_{i=n+1}^\infty \left( 1 - \frac{1}{p^i} \right).
\end{split}
\end{equation*}
\end{cor}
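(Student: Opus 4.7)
The plan is to translate each probability in question into a statement about the $p$-Sylow subgroup of $H(d)$ and then apply the Cohen-Lenstra model \eqref{cohenlenstrameasure} together with Lemma \ref{sumofclprobslemma}. Throughout, let $G_p \subseteq H(d)$ denote the $p$-Sylow subgroup, and recall that under Cohen-Lenstra the measure assigns mass $\eta_\infty(p)/|\aut(G)|$ to each isomorphism class $G \in \mf{G}_p$, where $\eta_\infty(p)$ is defined in \eqref{defofetainftyofp}.

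First I would observe the tautological equivalences
\[
p^n \nmid h \iff |G_p| \leq p^{n-1}, \qquad p^n \parallel h \iff |G_p| = p^n,
\]
so that under the Cohen-Lenstra model (and in view of \eqref{defofconditionalprob}, which restricts to odd class numbers and thereby focuses attention on the $p$-Sylow part for $p$ odd),
\[
\prob(p^n \nmid h) = \sum_{\substack{G \in \mf{G}_p \\ |G| \leq p^{n-1}}} \frac{\eta_\infty(p)}{|\aut(G)|}, \qquad \prob(p^n \parallel h) = \sum_{\substack{G \in \mf{G}_p \\ |G| = p^n}} \frac{\eta_\infty(p)}{|\aut(G)|}.
\]

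Next I would plug in Lemma \ref{sumofclprobslemma}: the first sum becomes $\eta_\infty(p) \prod_{i=1}^{n-1}(1-p^{-i})^{-1}$ and the second becomes $\eta_\infty(p) \cdot p^{-n} \prod_{i=1}^{n}(1-p^{-i})^{-1}$. Expanding $\eta_\infty(p) = \prod_{i=1}^\infty(1-p^{-i})$ and telescoping the finite product against the tail of the infinite one yields
\[
\prob(p^n \nmid h) = \prod_{i=n}^\infty\left(1-\frac{1}{p^i}\right), \qquad \prob(p^n \parallel h) = \frac{1}{p^n}\prod_{i=n+1}^\infty\left(1-\frac{1}{p^i}\right),
\]
which are the desired formulas. (A quick sanity check: for $n=0$ the first formula correctly gives $0$, since the factor $1 - p^0$ vanishes, reflecting that $p^0 = 1$ always divides $h$; and the $n=1$ case of the first formula recovers the familiar Cohen-Lenstra prediction $\prod_{i\geq 1}(1-p^{-i})$ for the probability that $p \nmid h$.)

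There is essentially no obstacle here beyond the bookkeeping just described: the corollary is a direct consequence of Lemma \ref{sumofclprobslemma} once one fixes the dictionary between events on $h$ and events on $p$-Sylow subgroups. The only subtle point worth flagging is the restriction to odd $h$ in \eqref{defofconditionalprob}, which is what makes the Cohen-Lenstra prediction (formulated for the odd part of class groups of imaginary quadratic fields) the right model for computing these probabilities.
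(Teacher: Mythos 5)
Your proposal is correct and follows essentially the same route as the paper: identify the event on $h$ with an event on the $p$-Sylow subgroup under the Cohen--Lenstra measure \eqref{cohenlenstrameasure} and invoke Lemma \ref{sumofclprobslemma}. The only (immaterial) difference is in the second identity, where you apply the exact-order sum of the lemma directly, while the paper obtains it as the difference $\prob(p^n \mid h) - \prob(p^{n+1} \mid h)$; both give the same telescoping computation.
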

\begin{proof}
The Cohen-Lenstra heuristics specify that 
\[
\prob(p^n \nmid h : \; \text{$h$ is an odd class number} ) = \mu( \{ G \in \mf{G}_p : |G| \leq p^{n-1} \} ).  
\]
Together with Lemma \ref{sumofclprobslemma}, this gives the first equation, and the second equation follows from the first since $\prob( p^n \parallel h : h \text{ is an odd class number})$ is equal to
\[
\prob( p^n \mid h : h \text{ is an odd class number}) - \prob( p^{n+1} \mid h : h \text{ is an odd class number}). \qedhere
\]
\end{proof}

\subsection{The class number formula and special values of $L$-functions}

Recall the class number formula, which in our context reads
\begin{equation} \label{CNF}
L(1, \chi_d) = \frac{\pi h(d)}{\sqrt{|d|}} \quad\quad (d \in \mf{D}, d < -8),
\end{equation}
where $L(s, \chi_d) = \sum_{n = 1}^\infty \chi_d(n) n^{-s}$ is the $L$-function attached to the Kronecker symbol $\chi_d := \left( \frac{d}{\cdot} \right)$.  This formula connects the statistical study of class numbers to that of the special values $L(1,\chi_d)$.  Building upon ideas that go back to P.D.T.A. Elliot, A. Granville and K. Soundararajan \cite{granvillesound} proved that, on average over $d \in \mf{D}$, $L(1,\chi_d)$ behaves like a random Euler product.  More precisely, if $\mathbb{X}(p)$ denotes the random variable defined by 
\[
\mathbb{X}(p) := 
\begin{cases}
1 & \text{ with probability } \frac{p}{2(p+1)} \\
0 & \text{ with probability } \frac{1}{p+1} \\
-1 & \text{ with probability } \frac{p}{2(p+1)},
\end{cases}
\]
and $L(1,\mathbb{X})$ denotes the random Euler product
\[
L(1,\mathbb{X}) := \prod_{p} \left( 1 - \frac{\mathbb{X}(p)}{p} \right)^{-1},
\]
then \cite[Theorem 2]{granvillesound} (see also \cite[p. 4]{sound}) implies that, for $|z| \leq \log x / (500(\log \log x)^2)$ and $\Re(z) > -1$, we have
\begin{equation} \label{granvillesoundavgresult}
\sum_{d \in \mf{D}(x)} L(1,\chi_d)^z = | \mf{D}(x) | \cdot
\mathbb{E}(L(1,\mathbb{X})^z) + O\left(| \mf{D}(x) | \exp\left(
    -\frac{\log x}{5 \log \log x} \right) \right), 
\end{equation}
where $\mathbb{E}$ denotes the expected value.  
This leads to the average result
\begin{equation} \label{soundsavgresult}
\sum_{h \leq H} \mc{F}(h) = \frac{3\zeta(2)}{\zeta(3)} H^2 + O\left( H^2 (\log H)^{-1/2 + \ve} \right),
\end{equation}
for any $\ve > 0$ (see \cite[Theorem 1]{sound}).  In the interest of establishing the appropriate constant in Conjecture \ref{refinedsoundconjecture}, we will next prove Theorem \ref{tweakedsoundprop}, which is an analogue of \eqref{soundsavgresult} averaged over \emph{odd} values of $h$.  

\section{The average of $\mc{F}(h)$ over odd values of $h$} \label{tweakedsoundpropproofsection}

In this section we prove Theorem \ref{tweakedsoundprop}, that is we
develop an asymptotic formula for $\ds \sum_{h \leq H \atop h \text{
    odd}} \mc{F}(h)$.  By genus theory, the restriction for $h \geq 3$
to be odd is equivalent to the condition that the associated discriminant
$d$ be \emph{prime}.  As
an auxiliary result, we begin by proving the analogue of
\eqref{granvillesoundavgresult} over prime discriminants. 

\subsection{The distribution of $L(1,\chi)$ over prime discriminants}

We now prove an asymptotic formula for the general moment of $L(1,\chi_{q})$ averaged over $q \in \mf{D}'(x)$.  Our proof generally follows the methods used in \cite[Theorem 2]{granvillesound}, but
the restriction to prime discriminants demands that we use a different probabilistic model than the model $\mathbb{X}$ introduced earlier.  Indeed, $\prob(\mathbb{X}(p) = 0) = 1/(p+1)$ corresponds to the probability that a random fundamental discriminant $d \in \mf{D}$ is divisible by the prime $p$, and one computes
\[
\prob( p \mid d : d \in \mf{D} ) = \frac{| p \mbz/p^2\mbz - \{ 0 \} |}{| \mbz/p^2\mbz - \{ 0 \} |} = \frac{1}{p+1}.
\]
On the other hand, the event $p \mid q$ can happen at most once for $q \in \mf{D}'$, and so we replace $\mathbb{X}$ with $\mathbb{Y}$, where we recall that
\begin{equation} \label{defofmby}
\mathbb{Y}(p) :=
\begin{cases}
1 & \text{ with probability } 1/2 \\
-1 & \text{ with probability } 1/2.
\end{cases}
\end{equation}
The corresponding random Euler product is then
\[
L(1,\mby) := \prod_{p} \left( 1 - \frac{\mby(p)}{p} \right)^{-1}.
\]
We will also make use of the following estimate for the remainder term in the Chebotarev density theorem for quadratic fields.
\begin{proposition} \label{errorinchebprop}
Assume the Generalized Riemann Hypothesis for Dedekind Zeta functions of quadratic number fields.  Then for $d \in \mbn$ and any real non-principal Dirichlet character $\chi$ modulo $d$, we have
\[
\sum_{p \leq x \atop \chi(p) = 1} 1 = \frac{1}{2} \Li(x) + O(x^{1/2} \log dx),
\]
with an absolute implied constant.
\end{proposition}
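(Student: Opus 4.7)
\medskip

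\noindent\textbf{Proof plan for Proposition~\ref{errorinchebprop}.}

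The plan is to translate the condition $\chi(p) = 1$ into a statement about splitting in the associated quadratic field, so that the sum becomes a linear combination of a prime counting function and a character sum, each of which can be handled by the standard GRH-conditional prime number theorem.

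First I would let $K$ denote the quadratic field cut out by $\chi$, i.e.\ $K = \mbq(\sqrt{d^{*}})$ where $d^{*}$ is the fundamental discriminant attached to $\chi$. For any prime $p$ unramified in $K$ (equivalently, $p \nmid d$), we have
\[
\chi(p) = 1 \iff p \text{ splits in } K, \qquad \chi(p) = -1 \iff p \text{ is inert in } K,
\]
and thus $\mathbf{1}_{\chi(p) = 1} = \tfrac{1}{2}\bigl(1 + \chi(p)\bigr)$ for such $p$. The ramified primes form a set of cardinality $\omega(d) = O(\log d)$, and their contribution to the left-hand side is trivially $O(\log d)$, which is absorbed into the stated error.

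Next I would split the sum as
\[
\sum_{\substack{p \leq x \\ \chi(p) = 1}} 1 \;=\; \tfrac{1}{2} \sum_{\substack{p \leq x \\ p \nmid d}} 1 \;+\; \tfrac{1}{2} \sum_{\substack{p \leq x \\ p \nmid d}} \chi(p) \;+\; O(\log d).
\]
For the first sum, the GRH-conditional prime number theorem gives $\pi(x) = \Li(x) + O(x^{1/2} \log x)$ with absolute implied constant, and removing the $\omega(d) = O(\log d)$ primes dividing $d$ costs only $O(\log d)$. For the second sum, GRH for the Dirichlet $L$-function $L(s,\chi)$ — which is a factor of $\zeta_K(s)$ and hence covered by the hypothesis of GRH for $\zeta_K$ via the factorization $\zeta_K(s) = \zeta(s) L(s, \chi)$ — gives the standard bound
\[
\sum_{p \leq x} \chi(p) \log p \;\ll\; x^{1/2} (\log dx)^{2},
\]
from which partial summation (or the Perron formula applied directly to the character sum) yields $\sum_{p \leq x} \chi(p) = O(x^{1/2} \log dx)$. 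Combining these three contributions and dividing by $2$ gives the claimed asymptotic.

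The only point requiring some care is tracking the dependence on $d$ in the error term and making sure the implied constants are absolute. This is a routine application of standard explicit-formula techniques (writing $\psi(x, \chi) = -\sum_{|\gamma| \leq T} x^\rho / \rho + \text{error}$ and using GRH to bound $|x^\rho| \leq x^{1/2}$), but I would verify that the standard references (e.g.\ Montgomery--Vaughan) yield the error $O(x^{1/2} \log dx)$ rather than $O(x^{1/2} (\log dx)^2)$ when one targets $\pi(x,\chi)$ via partial summation from $\psi(x,\chi)$; this is the only potentially delicate step, though for applications in the paper the weaker bound would likely suffice.
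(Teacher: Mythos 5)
Your plan is mathematically sound, but it is worth knowing that the paper does not prove this proposition from scratch at all: it simply quotes the GRH-conditional effective Chebotarev density theorem of Lagarias--Odlyzko \cite{lagariasodlyzko}, in the form given by Serre \cite{serre}, specialized to the quadratic extension cut out by $\chi$. Your argument is essentially a direct, self-contained proof of that special case: the factorization $\zeta_K(s)=\zeta(s)L(s,\chi)$, the identity $\mathbf{1}_{\chi(p)=1}=\tfrac12(1+\chi(p))$ away from the $O(\log d)$ ramified primes, the GRH prime number theorem for the main term, and a GRH character-sum bound for the remainder. What the citation buys the authors is precisely the point you flag at the end: passing from $\psi(x,\chi)\ll x^{1/2}(\log dx)^2$ to $\sum_{p\le x}\chi(p)$ by partial summation only yields $O\bigl(x^{1/2}(\log dx)^2/\log x\bigr)$, which is \emph{not} $O(x^{1/2}\log dx)$ when $\log d$ grows faster than $\log x$ --- and in the paper's application (Corollary 3.2 feeding into Theorem 3.3) the modulus is of size $n$ with $\log n$ as large as $(\log x)^{10}$, so this regime genuinely occurs. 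The Lagarias--Odlyzko kernel argument estimates the prime-counting function directly and delivers the single power of $\log(dx)$ uniformly in $d$. That said, you are also right that the weaker bound $x^{1/2}(\log dx)^{2}$ would still suffice for everything the paper does with this proposition, since the final estimate there absorbs all such logarithms into $x^{\varepsilon}$. So: correct strategy, one real (and self-identified) technical gap in the uniformity over $d$, which the paper avoids by outsourcing the whole statement to \cite{lagariasodlyzko} and \cite{serre}.
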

\begin{proof}
This is a special case of a theorem of Lagarias-Odlyzko on the error
term in the Chebotarev density theorem for general number fields; see
\cite[Theorem 1.3]{lagariasodlyzko} and \cite[Th\'{e}or\`{e}me
2]{serre}.
\end{proof}
As an immediate corollary, one deduces the following analogue of the Polya-Vinogradov Theorem, which gives square-root cancellation of characters sums over \emph{prime} values.
\begin{cor} \label{helpfulcorollary}
Assume the Generalized Riemann Hypothesis for Dedekind Zeta functions of quadratic number fields.  Then for $n \in \mbn$ which is not a square, we have
\[
\left| \sum_{q \in \mf{D}'(x)} \chi_{q}(n) \right| \ll x^{1/2} \log(nx),
\]
with an absolute implied constant.
\end{cor}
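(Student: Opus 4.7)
The plan is to express the given sum as a difference of two prime sums of real Dirichlet characters, and then invoke Proposition~\ref{errorinchebprop} to extract square-root cancellation.

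By genus theory, $\mf{D}'(x) = \{-p : p \text{ odd prime}, \; p \equiv 3 \pmod{4}, \; p \leq x\}$, so
\[
\sum_{q \in \mf{D}'(x)} \chi_q(n) \;=\; \sum_{\substack{p \leq x \\ p \equiv 3 \pmod 4}} \chi_{-p}(n).
\]
Next, by quadratic reciprocity, the function $q \mapsto \chi_q(n)$ is a real Dirichlet character $\psi$ in $q$ of modulus dividing some $d$ with $d \ll n$; since $n$ is not a perfect square, $\psi$ is nontrivial. Writing $\chi_{-p}(n) = \psi(-1)\psi(p)$, the task reduces to bounding
\[
\sum_{\substack{p \leq x \\ p \equiv 3 \pmod 4}} \psi(p).
\]

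To incorporate the congruence condition, I would use the identity $\mathbf{1}_{p \equiv 3 \pmod 4} = \tfrac{1}{2}\bigl(1 - \chi_4(p)\bigr)$ valid for odd $p$, where $\chi_4$ denotes the nontrivial character modulo $4$. This expresses the sum as
\[
\tfrac{1}{2}\sum_{\substack{p \leq x \\ p \text{ odd}}} \psi(p) \;-\; \tfrac{1}{2}\sum_{\substack{p \leq x \\ p \text{ odd}}} (\chi_4\psi)(p),
\]
a difference of two prime character sums, each involving a real Dirichlet character of modulus $O(n)$. The character $\psi$ is nontrivial by construction; the product $\chi_4\psi$ is also nontrivial, because $\psi = \chi_4$ would identify the quadratic field attached to $\psi$ with $\mbq(i)$ under the classification of real primitive Dirichlet characters by fundamental discriminants, whereas the field attached to $\psi$ is $\mbq(\sqrt{n})$ (when $n \equiv 1 \pmod 4$) or $\mbq(\sqrt{-n})$ (when $n \equiv 3 \pmod 4$), and in either case equality with $\mbq(i)$ would force $n$ to be a perfect square, contrary to hypothesis. (Even $n$ is handled analogously by incorporating a factor $\chi_8$.)

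Finally, for any real nontrivial Dirichlet character $\chi$ modulo $d$, I would deduce from Proposition~\ref{errorinchebprop} together with the GRH estimate $\pi(x) = \Li(x) + O(x^{1/2}\log x)$ that
\[
\sum_{p \leq x}\chi(p) \;=\; 2\sum_{\substack{p \leq x \\ \chi(p) = 1}} 1 \;-\; \sum_{\substack{p \leq x \\ (p,d)=1}} 1 \;\ll\; x^{1/2}\log(dx).
\]
Applying this bound to $\psi$ and $\chi_4\psi$, each of modulus $\ll n$, yields the desired estimate $\ll x^{1/2}\log(nx)$. The main obstacle beyond routine bookkeeping is the nontriviality verification for $\chi_4\psi$, which rests on the classification of real primitive Dirichlet characters by fundamental discriminants.
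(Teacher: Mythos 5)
Your argument is correct and is precisely the derivation the paper intends: the paper states this as an ``immediate corollary'' of Proposition~\ref{errorinchebprop} without writing out details, and your writeup supplies exactly those details (identifying $\mf{D}'(x)$ with primes $\equiv 3 \pmod 4$, viewing $q \mapsto \chi_q(n)$ as a non-principal real character of modulus $O(n)$, splitting off the congruence condition with $\chi_4$, and converting $\sum_{p\le x}\chi(p)$ into $2\sum_{\chi(p)=1}1 - \sum_{(p,d)=1}1$). The only cosmetic quibble is that the description of $\mf{D}'(x)$ follows from the definition of fundamental discriminant rather than from genus theory.
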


The next theorem follows from Corollary \ref{helpfulcorollary}, together with some technical lemmas from \cite{granvillesound}.  In particular, its proof will utilize  several properties of the $z$-th divisor function $d_z(n)$ for $z \in \mbc$, which is characterized by the equation
\[
\zeta(s)^z = \sum_{n=1}^\infty \frac{d_z(n)}{n^s} \quad\quad \left( \Re(s) > 1 \right).
\]
Further note that $d_z(n)$ is a multiplicative function, and for prime powers $n = p^a$ we have that
\begin{equation} \label{dzonprimepowers}
d_z(p^a) = \frac{\Gamma(z+a)}{a! \Gamma(z)} = \frac{z(z+1)(z+2) \dots (z+a-1)}{a!}
\end{equation}

\begin{Theorem} \label{tweakedgranvillesoundprop}
Assume the Generalized Riemann Hypothesis and let $\ve > 0$.  Then, uniformly for $|z| \leq \log x / (500(\log \log x)^2)$, we have
\begin{equation*} \label{granvillesoundassumed}
\sum_{{\begin{substack} {q \in \mf{D}'(x)} \end{substack}}}
L(1,\chi_{q})^z = | \mf{D}'(x) | \cdot \mathbb{E}(L(1,\mathbb{Y})^z) +
O_{\ve} \left( x^{1/2 + \ve} \right). 
\end{equation*}
\end{Theorem}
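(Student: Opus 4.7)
The plan is to adapt the method of Granville--Soundararajan \cite[Theorem 2]{granvillesound} (which yielded \eqref{granvillesoundavgresult}) to averaging over prime discriminants. Two structural changes are needed. First, when $q \in \mf{D}'(x)$ is a prime larger than $p$, $\chi_q(p)$ takes the values $\pm 1$ each with probability exactly $1/2$ and never the value $0$, which accounts for the replacement of the model $\mbx$ by $\mby$ in the expected value. Second, the absence of a usable unconditional Polya--Vinogradov style bound in the prime-discriminant setting forces us to substitute the GRH-conditional estimate Corollary \ref{helpfulcorollary} wherever \cite{granvillesound} invokes Polya--Vinogradov.

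First I would expand $L(1,\chi_q)^z$ as a truncated Dirichlet series
\[
L(1,\chi_q)^z = \sum_{n \leq Y} \frac{d_z(n)\chi_q(n)}{n} + \text{error}(q,Y,z),
\]
for an appropriate cutoff $Y$ (polynomial in $x$, to be optimized), and control the tail error uniformly in $z$ using the same tools as in \cite{granvillesound}: namely the GRH-conditional bounds $(\log\log q)^{-2} \ll L(1,\chi_q) \ll (\log\log q)^2$, combined with standard moment bounds for the $z$-th divisor function. The restriction $|z| \leq \log x/(500(\log\log x)^2)$ originates in this step, where one needs $(\log Y)^{|z|}$-type factors to remain of acceptable size.

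Next I would interchange summation and split the inner $n$-sum according to whether $n$ is a perfect square. For non-square $n$, Corollary \ref{helpfulcorollary} gives $\left|\sum_{q \in \mf{D}'(x)} \chi_q(n)\right| \ll x^{1/2}\log(nx)$; combined with the divisor-function bound $\sum_{n \leq Y}|d_z(n)|/n \ll (\log Y)^{O(|z|+1)}$ valid throughout the allowed range of $z$, this total contribution is $O(x^{1/2+\ve})$ provided $Y$ is a sufficiently small power of $x$. For square $n = m^2$, we have $\chi_q(m^2) = 1$ unless $q \mid m$, so
\[
\sum_{q \in \mf{D}'(x)} \chi_q(m^2) = |\mf{D}'(x)| + O(\omega(m)),
\]
producing a main term $|\mf{D}'(x)| \sum_{m^2 \leq Y} d_z(m^2)/m^2$ plus an admissible error.

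Finally I would complete the square-sum back to infinity and identify the resulting series with the expected value, using \eqref{dzonprimepowers} and independence of the $\mby(p)$:
\[
\sum_{m=1}^\infty \frac{d_z(m^2)}{m^2} = \prod_p \sum_{\substack{a \geq 0 \\ a \text{ even}}} \frac{d_z(p^a)}{p^a} = \prod_p \frac{1}{2}\Bigl[(1-1/p)^{-z} + (1+1/p)^{-z}\Bigr] = \mbe\bigl(L(1,\mby)^z\bigr).
\]
The tail beyond $m^2 > Y$ is again controlled by the same divisor-function moment estimates. I expect the main obstacle to be the same one that dictates the shape of the Granville--Soundararajan theorem: keeping every implicit constant uniform in $z$ across the full range $|z| \leq \log x/(500(\log\log x)^2)$, since the divisor-function moments $\sum_n |d_z(n)|^k/n^s$ degrade rapidly as $|z|$ grows, and it is precisely this uniformity requirement that forces the double-logarithmic size restriction on $z$ appearing in the statement.
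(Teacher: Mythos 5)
Your proposal follows essentially the same route as the paper's proof: expand $L(1,\chi_q)^z$ as a Dirichlet series in $n$, swap the order of summation, extract the main term from the square $n=m^2$ (using $\sum_{q}\chi_q(m^2)=|\mf{D}'(x)|+O(\omega(m))$ and the Euler-product identity $\sum_m d_z(m^2)/m^2=\prod_p\tfrac12[(1-1/p)^{-z}+(1+1/p)^{-z}]=\mbe(L(1,\mby)^z)$), and control the non-square terms with Corollary \ref{helpfulcorollary} together with divisor-function moment bounds. The only real divergence is technical: the paper avoids justifying a sharp polynomial truncation by invoking \cite[Lemma 2.3]{granvillesound} with the smooth weight $e^{-n/Z}$ and the superpolynomial cutoff $Z=\exp((\log x)^{10})$, which yields a per-character error of $O(1/q)$ for free, whereas your sharp cutoff at a power of $x$ would require a separate (GRH-conditional) contour-shift argument to bound the truncation error uniformly in $z$.
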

\begin{proof}
By Lemma 2.3 of \cite{granvillesound}, for any $Z \in \mbr$ with $Z \geq \exp \left( (\log q)^{10}) \right)$ we have
\[
L(1,\chi_{q}^z) = \sum_{n=1}^\infty \chi_{q}(n) \frac{d_z(n)}{n} e^{-n/Z} + O\left( \frac{1}{q} \right).
\]
(Note that, since we are assuming GRH, we may ignore any possible exceptional discriminants.)  Thus we have
\begin{equation} \label{decompofLseries}
\sum_{q \in \mf{D}'(x)} L(1,\chi_{q})^z = \sum_{n = 1}^\infty \frac{d_z(n)}{n} e^{-n/Z} \sum_{q \in \mf{D}'(x)} \chi_{q}(n) + O( \log \log x ).
\end{equation}
The main term in our asymptotic comes from the subsequence $n = m^2$; the other values of $n$ contribute to the remainder term.  Indeed, for $n = m^2$, we have
\[
\sum_{q \in \mf{D}'(x)} \chi_{q}(m^2) = | \mf{D}'(x) | + O(\om(m)),
\]
and the contribution of these terms to \eqref{decompofLseries} is thus
\[
| \mf{D}'(x) | \sum_{m = 1}^\infty \frac{d_z(m^2)}{m^2} e^{-m^2/Z} + O\left( \log \log x + \sum_{m = 1}^\infty \frac{| d_z(m^2) \om(m) |}{m^2} e^{-m^2/Z} \right).
\] 
Using $\om(m) \leq d(m)$ together with the bounds
\[
\sum_{m = 1}^\infty \frac{d_z(m^2) d(m)}{m^2} e^{-m^2/Z} \ll \log(|z| + 2)^{4|z|+4} \ll_\ve x^\ve
\]
and
\[
\sum_{m=1}^\infty \frac{d_z(m^2)}{m^2} \left( 1 - e^{-m^2/Z} \right) \leq \sum_{m=1}^\infty \frac{d_{(|z|+1)^2}(m)}{m^2} \left( \frac{m^2}{Z} \right)^{1/4} = \frac{\zeta(3/2)^{(1+|z|)^2}}{Z^{1/4}} \leq \frac{1}{x}
\]
(see \cite[p. 1014]{granvillesound}), one finds that the contribution of the
$n=m^2$ terms to \eqref{decompofLseries} is thus
\[
\begin{split}
| \mf{D}'(x) | \sum_{m=1}^\infty \frac{d_z(m^2)}{m^2} + O_\ve(x^\ve) 
&=
| \mf{D}'(x) | \prod_{p} \left( \sum_{j = 0}^\infty \frac{d_z(p^{2j})}{p^{2j}} \right) + O_\ve(x^\ve) \\
&=
| \mf{D}'(x) | \prod_{p} \left( \sum_{j = 0}^\infty \binom{-z}{2j} \frac{1}{p^{2j}} \right) + O_\ve(x^\ve) \\
&=
| \mf{D}'(x) | \prod_{p} \frac{1}{2} \left( \left(1+\frac{1}{p}\right)^{-z} + \left(1-\frac{1}{p}\right)^{-z} \right) + O_\ve(x^\ve) \\
&=
| \mf{D}'(x) | \cdot \mbe(L(1,\mby)^z) + O_\ve(x^\ve),
\end{split}
\]
where we have used \eqref{dzonprimepowers} together with the binomial series expansions of $\ds \left(1+\frac{1}{p}\right)^{-z}$ and $\ds \left(1-\frac{1}{p}\right)^{-z}$.  In order to handle the terms $n \neq \Box$, we begin by inserting the result of Corollary \ref{helpfulcorollary} into the right-hand side of \eqref{decompofLseries}, obtaining
\begin{equation} \label{thenonsquareterms}
\begin{split}
\left| \sum_{n = 1 \atop n \neq \Box}^\infty \frac{d_z(n)}{n} e^{-n/Z} \sum_{q \in \mf{D}'(x)} \chi_{q}(n) \right| &\ll x^{1/2} \log x\sum_{n = 1}^\infty \frac{| d_z(n) |}{n} e^{-n/Z} \log n \\
&\ll x^{1/2} \log x \sum_{n = 1}^\infty \frac{ d_{\lceil |z| \rceil}(n)}{n} e^{-n/Z} \log n,
\end{split}
\end{equation}
where we have used $| d_z(n) | \leq d_{|z|}(n)$ and
$d_{t_1}(n) \leq d_{t_2}(n)$ for $t_1, t_2 \in \mbr_{>0}$ and
$t_1 \leq t_2$.  In \cite[(2.4), p. 1001]{granvillesound} it is observed that
$\sum_{n=1}^\infty \frac{d_k(n)}{n} e^{-n/Z} \leq (\log 3Z)^k$ for any
positive integer $k$ and real number $Z \geq 2$.  One may adapt that
argument to obtain a similar bound for
$\sum_{n=1}^\infty \frac{d_k(n)}{n} e^{-n/Z} \log n$ by introducing
the function
\[
\widetilde{\log}(t) :=
\begin{cases}
2 & \text{ if } t < e^2 \\
\log t & \text{ if } t \geq e^2.
\end{cases}
\]
Note that, for any $a_1, a_2, \dots, a_k \in \mbn$ we have
\[
\log(a_1 \cdot a_2 \cdot \dots \cdot a_k) \leq \widetilde{\log}(a_1 \cdot a_2 \cdot \dots \cdot a_k) \leq \widetilde{\log}(a_1) \cdot  \widetilde{\log} (a_2)  \cdot \dots \cdot  \widetilde{\log} (a_k).
\]
Furthermore, by estimating a discrete sum by a continuous integral we
may see that, for $Z$ large enough, 
\[
\sum_{a=1}^\infty \frac{e^{-a/Z}}{a}  \widetilde{\log}(a) \ll (\log (e^2 \cdot Z))^2.
\]
Using these facts together with the inequality $\ds d_k(n) e^{-n/Z} \leq e^{k/Z} \sum_{a_1 a_2 \dots a_k = n} e^{-(a_1 + a_2 + \dots + a_k)/Z}$, we find that
\[
\sum_{n=1}^\infty \frac{d_k(n)}{n} e^{-n/Z} \log n \leq \left( e^{1/Z}
  \sum_{a=1}^\infty \frac{e^{-a/Z}}{a}  \widetilde{\log}(a) \right)^k
\leq (\log( e^2 \cdot Z))^{3k}, 
\]
for $Z$ large enough.  Inserting this into \eqref{thenonsquareterms} and taking $Z = \exp\left( (\log x)^{10} \right)$,
we obtain
\[
\begin{split}
\left| \sum_{n = 1 \atop n \neq \Box}^\infty \frac{d_z(n)}{n} e^{-n/Z} \sum_{q \in \mf{D}'(x)} \chi_{q}(n) \right|
&\ll x^{1/2} \log x (\log(e^2 \cdot Z))^{3 \lceil |z| \rceil}, \\
&\ll_\ve x^{1/2 + \ve}.
\end{split}
\]
This completes the proof of Theorem \ref{tweakedgranvillesoundprop}.
\end{proof}

\subsection{The proof of Theorem \ref{tweakedsoundprop}}

We will largely follow the proof of \cite[Theorem 1]{sound} with critical  modifications in appropriate places; we include the details here for completeness.  We make use of the smooth cut-off function
\[
\mf{H}_{c,\gd}(x) := \frac{1}{2\pi i} \int_{c-i\infty}^{c+i\infty} \frac{x^s}{s} \left( \frac{(1+\gd)^{s+1} - 1}{\gd(s+1)} \right) ds,
\]
where the parameters $c, \gd > 0$ will be specified soon.  For any $c, \gd > 0$ we have
\begin{equation} \label{Hascutoff}
\mf{H}_{c,\gd}(x) =
\begin{cases}
1 & \text{ if } x \geq 1 \\
(1 + \gd - 1/x)/\gd & \text{ if } (1+\gd)^{-1} \leq x \leq 1 \\
0 & \text{ if } x \leq (1 + \gd)^{-1}.
\end{cases}
\end{equation}
Just as in \cite{sound}, by using \cite[Theorem 4]{granvillesound}
, one obtains that
\begin{equation} \label{cutoffatX}
\sum_{h \leq H \atop h \text{ odd}} \mc{F}(h) = \sum_{q \in \mf{D}'(X) \atop h_q \leq H} 1 + O_A\left( \frac{H^2}{(\log H)^A} \right)
\end{equation}
for any $A > 0$, where $X := H^2 \log \log H$.  By the class number formula, \eqref{cutoffatX} and \eqref{Hascutoff}, it follows that
\[
\sum_{h \leq H \atop h \text{ odd}} \mc{F}(h) \leq \sum_{q \in \mf{D}'(X)} \mf{H}_{c,\gd}\left( \frac{\pi H}{\sqrt{q} L(1,\chi_q)} \right) + O_A\left( \frac{H^2}{(\log H)^A} \right)\leq \sum_{h \leq H(1+\gd) \atop h \text{ odd}} \mc{F}(h).
\]
We will now work with the main term in the middle above, which is
\begin{equation} \label{whatwefocuson}
\frac{1}{2\pi i} \int_{c-i\infty}^{c+i\infty} \sum_{q \in \mf{D}'(X)} \left( \frac{\pi}{\sqrt{q}L(1,\chi_q)} \right)^s \frac{H^s}{s} \left( \frac{(1+\gd)^{s+1} - 1}{\gd(s+1)} \right) ds.
\end{equation}
We will put $c := 1/\log H$ and $\gd := 1/(\log H)^{1/2}$.  We furthermore set $S := \log X / (10^4 (\log \log X)^2)$ and decompose the above interval into
\[
\int_{|s| \leq S} \; + \; \int_{|s| > S}.
\]
The second term is easily seen to be
\[
\ll \frac{\mf{D}'(X)}{\gd} \int_{|s| > S} \frac{1}{|s(s+1)|} |ds| \ll \frac{H^2}{(\log H)^{3/2 - \ve}}.
\]
For the integral over $|s| \leq S$, we will use Theorem \ref{tweakedgranvillesoundprop} to re-write the integrand in terms of the appropriate moment of $L(1,\mby)$ and then reinterpret $\mf{H}_{c,\gd}$ as a smooth cut-off function as in \eqref{Hascutoff}.  First note that the following equation follows immediately from Theorem \ref{tweakedgranvillesoundprop} by partial summation:
\[
\sum_{q \in \mf{D}'(X)} \left( \sqrt{q} L(1,\chi_q) \right)^{-s} = \mbe(L(1,\mby)^{-s}) \int_1^X t^{-s/2} d\mf{D}'(t) + O_\ve(X^{1/2+\ve}).
\]
Thus, \eqref{whatwefocuson} is equal to
\begin{equation} \label{newtofocuson}
\begin{split}
&\frac{1}{2\pi i} \int_{|s| \leq S} \mbe(L(1,\mby)^{-s}) \int_1^X t^{-s/2} d \mf{D}'(t) \frac{(\pi H)^s}{s} \left( \frac{(1+\gd)^{s+1} - 1}{\gd(s+1)} \right) ds + O_\ve \left( \frac{H^2}{(\log H)^{3/2 - \ve}} \right) \\
= & \mbe\left( \int_1^X \frac{1}{2\pi i} \int_{|s| \leq S} \left( \frac{\pi H}{\sqrt{t} L(1,\mby)} \right)^s \frac{1}{s} \left( \frac{(1+\gd)^{s+1} - 1}{\gd(s+1)} \right) ds \, d \mf{D}'(t) \right) + O_\ve \left( \frac{H^2}{(\log H)^{3/2 - \ve}} \right)
\end{split}
\end{equation}
Extending the integral to $\int_{c-i\infty}^{c+i\infty}$ and managing
the error, we find that  
\[
\frac{1}{2\pi i} \int_{|s| \leq S} \left( \frac{\pi H}{\sqrt{t} L(1,\mby)} \right)^s \frac{1}{s} \left( \frac{(1+\gd)^{s+1} - 1}{\gd(s+1)} \right) ds = \mf{H}_{c,\gd}\left( \frac{\pi H}{\sqrt{t} L(1,\mby)} \right) + O_\ve \left( \frac{L(1,\mby)^{-c}}{(\log H)^{3/2 - \ve}} \right).
\]
Inserting this into \eqref{newtofocuson}, we find that \eqref{whatwefocuson} is equal to 
\begin{equation} \label{newnewtofocuson}
\begin{split}
&\mbe \left( \int_1^{\min \left( \frac{\pi^2 H^2}{L(1,\mby)^2}, X \right)} d \mf{D}'(t) + O_\ve \left( \frac{H^2}{(\log H)^{3/2 - \ve}}(1 + L(1,\mby)^{-c}) \right) \right) \\
= & \frac{1}{2} \mbe \left( \Li \left( \min \left( \frac{\pi^2 H^2}{L(1,\mby)^2}, X \right) \right) \right) + O_\ve \left( \frac{H^2}{(\log H)^{3/2 - \ve}} \right).
\end{split}
\end{equation}
Now using \cite[Proposition 1]{granvillesound}, we find that $\min \left( \frac{\pi^2 H^2}{L(1,\mby)^2}, X \right) = \frac{\pi^2 H^2}{L(1,\mby)^2} + O_A\left( \frac{H^2}{(\log H)^A} \right)$ for any $A > 0$, and so we find that \eqref{newnewtofocuson} becomes
\[
 \frac{1}{2} \mbe \left( \Li \left( \frac{\pi^2 H^2}{L(1,\mby)^2} \right) \right) + O_\ve \left( \frac{H^2}{(\log H)^{3/2 - \ve}} \right).
\]
Finally, using the asymptotic $\ds \Li(x) \sim \frac{x}{\log x}$ together with the calculation
\begin{gather}
\begin{split}
\mbe(L(1,\mby)^{-2}) = \prod_{p} \mbe \left( \left( 1 - \frac{\mby(p)}{p} \right)^2 \right) &= \prod_{p} \left( \frac{1}{2} \left( 1 - \frac{1}{p} \right)^2 + \frac{1}{2} \left( 1 + \frac{1}{p}\right)^2 \right) \\
&= \prod_{p} \left( 1 - \frac{1}{p^4} \right) \left( 1 - \frac{1}{p^2} \right)^{-1} = \frac{\zeta(2)}{\zeta(4)} = \frac{15}{\pi^2},
\end{split} \label{evalofc0}
\end{gather}
the proof of Theorem \ref{tweakedsoundprop} is concluded.
\begin{remark} \label{moreaccurateremark}
Our proof shows that in fact
\[
\sum_{h \leq H \atop h \text{ odd}} \mc{F}(h) = \frac{1}{2} \mbe \left( \Li \left( \frac{\pi^2 H^2}{L(1,\mby)^2} \right) \right) + O_\ve  \left( \frac{H^2}{(\log H)^{3/2 - \ve}} \right).
\]
We find that the main term in the above expression fits the numerical
data much better than the asymptotically equivalent formula given in
Theorem \ref{tweakedsoundprop}, though it must be stressed that the
corrections are of lower order than the error term.  In the tables
presented in Sections 
\ref{introduction} and \ref{numerics}, the number listed under
``predicted'' refers to the higher order expansion of
$\ds \frac{\mf{C}}{15} \cdot \mf{c}(h) \cdot h \cdot \mbe \left(
  \frac{1}{L(1,\mby)^2 \log (\pi h / L(1,\mby))} \right)$
given in \eqref{realpred}.
\end{remark}

\section{Heuristics} \label{heuristics}

\subsection{Heuristics for Conjecture \ref{refinedsoundconjecture}}
\label{sec:sound-conj}

Recall from Remark \ref{moreaccurateremark} that we have
\begin{gather*}
  \sum_{\substack{ h\leq H \\ h\text{ odd} }} \mathcal F(h) \approx \frac12\E{ \text{Li}\left(
    \dfrac{\pi^2 H^2}{L(1,\mathbb Y)^2}
  \right)}.
\end{gather*}
Denote the right-hand side by $G(H)$.
An average order of $\mathcal F$ is given by 
\begin{gather*}
  G(h)-G(h-2) \approx 
  2 \frac d{dh} G'(h) =
  \E{ \frac d{dh} \text{Li}\left(
    \dfrac{\pi^2 h^2}{L(1,\mathbb Y)^2}
  \right)} = \\
  2\pi^2 h \E{ 
    \dfrac{1}{
      L(1,\mathbb Y)^2
      \log\left( {\pi^2 h^2}/{L(1,\mathbb Y)^2}\right)
    }
  } = 
  \frac{\pi^2 h}{\log(\pi h)} \E{ 
    \dfrac1{L(1,\mathbb Y)^2}
    \dfrac{1}{
      1-\frac{\log L(1,\mathbb Y)}{\log(\pi h)}
    }
  }.
\end{gather*}
With a high probability, we have $\log L(1,\mathbb Y)/\log (\pi h)< 1$ for large $h$, so the above can be approximated with
\begin{gather}
  \frac{\pi^2 h}{\log(\pi h)} \E{ 
    \dfrac1{L(1,\mathbb Y)^2}
    \left(
      1
      +\frac{\log L(1,\mathbb Y)}{\log(\pi h)}
      +\frac{\log^2 L(1,\mathbb Y)}{\log^2(\pi h)}
      +\cdots
    \right)
  }. \label{infterms}
\end{gather}
We will approximate this by keeping the first few terms in the innermost
parentheses. In this regard, define 
$c_0\defeq \E{\frac1{L(1,\mathbb Y)^2}}$, $c_1 \defeq
\frac1{c_0}\E{\frac{\log L(1,\mathbb Y)}{L(1,\mathbb Y)^2}}$, $c_2
\defeq \frac1{c_0}\E{\frac{\log^2 L(1,\mathbb Y)}{L(1,\mathbb
    Y)^2}}$, and $c_3\defeq \frac1{c_0}\E{\frac{\log^3 L(1,\mathbb
    Y)}{L(1,\mathbb Y)^2}}$. Recall from \eqref{evalofc0} that
$c_0=15/\pi^2$. The 
constants $c_1, c_2$ and $c_3$ may be calculated to arbitrary precision as
follows. 
Write $L_p\defeq 1-\frac{\mathbb Y(p)}p$. Then $L(1,\mathbb Y)=\prod_p L_p\inv$ and $\log L(1,\mathbb Y)=-\sum_p\log L_p$. Now
\begin{gather}
  \E{\frac{\log L(1,\mathbb Y)}{L(1,\mathbb Y)^2}} = \E{-\sum_p \log L_p\prod_r L_r^2}
  = -\sum_p\E{L_p^2 \log L_p } \prod_{r\neq p} \E{L_r^2} 
  = -c_0 \sum_p\frac{ \E{L_p^2 \log L_p } }{\E{L_p^2}} \label{evalofc1}
\end{gather}
where $\E{L_p^2}=1+\frac1{p^2}$ and $\E{L_p^2\log L_p}=\frac12\left((1-\frac1p)^2\log(1-\frac1p)+(1+\frac1p)^2\log(1+\frac1p)\right)$. Next
\begin{gather}
  \E{\frac{\log^2 L(1,\mathbb Y)}{L(1,\mathbb Y)^2}} =
  \E{
    \sum_{p,q}\log L_p\log L_q\prod_{r}L_r^2 
  } = \nonumber \\
  \E{
    \sum_{p\neq q}L_p^2L_q^2\log L_p\log L_q\prod_{r\neq p,q}L_r^2 +
    \sum_{p}L_p^2(\log L_p)^2\prod_{r\neq p}L_r^2
  } = \nonumber \\
  \sum_{p\neq q}\E{L_p^2\log L_p}\E{L_q^2\log L_q}\prod_{r\neq p,q}\E{L_r^2} +
    \sum_{p}\E{L_p^2(\log L_p)^2}\prod_{r\neq p}\E{L_r^2} = \nonumber \\
  c_0\sum_{p\neq q}\frac{\E{L_p^2\log L_p}\E{L_q^2\log L_q}}{\E{L_p^2}\E{L_q^2} } +
    c_0\sum_{p}\frac{\E{L_p^2(\log L_p)^2}}{\E{L_p^2}} = \nonumber \\
  c_0\cdot \left(
      \left(\sum_{p}\frac{\E{L_p^2\log L_p}}{\E{L_p^2} }\right)^2 
      - \sum_{p}\left(\frac{\E{L_p  ^2\log L_p}}{\E{L_p^2} }\right)^2
      + \sum_{p}\frac{\E{L_p^2(\log L_p)^2}}{\E{L_p^2}}
    \right) \label{evalofc2}
\end{gather}
where $\E{L_p^2(\log L_p)^2}=\frac12\left((1-\frac1p)^2\log^2(1-\frac1p)+(1+\frac1p)^2\log^2(1+\frac1p)\right)$.
One may similarly show that
\begin{gather}
  -\frac1{c_0}\E{\frac{\log^3 L(1,\mathbb Y)}{L(1,\mathbb Y)^2}} =  
    \sum_{\substack{p,q,r\\ \text{distinct}}}\frac{\E{(\log L_p)L_p^2}}{\E{L_p^2}} \frac{\E{(\log L_q)L_q^2}}{\E{L_q^2}} \frac{\E{(\log L_r)L_r^2}}{\E{L_r^2}} + \nonumber \\
    3\sum_{\substack{p\neq r}}\frac{\E{(\log L_p)^2L_p^2}}{\E{L_p^2}}   \frac{\E{(\log L_r)L_r^2}}{\E{L_r^2}} + 
    \sum_{\substack{p}}\frac{\E{(\log L_p)^3L_p^2}}{\E{L_p^2}}. \label{evalofc3}
\end{gather}
Calculating the expressions \eqref{evalofc1}, \eqref{evalofc2} and \eqref{evalofc3} with
$10^5$ prime terms yields
\begin{gather}
  \begin{split}
  c_1 &\approx -0.578071, \\
  c_2 &\approx +0.604049, \\
  c_3 &\approx -0.526259.
  \end{split} \label{c1andc2}
\end{gather}
Thus, taking the first four terms of \eqref{infterms}, an approximation of $\mathcal F(h)$ for odd $h$ is
\begin{gather}
  \frac{\pi^2 h}{\log(\pi h)}
    \left(
      c_0
      +\frac{c_0c_1}{\log(\pi h)}
      +\frac{c_0c_2}{\log^2(\pi h)}
      +\frac{c_0c_3}{\log^3(\pi h)}
    \right) =
  \frac{15 h}{\log(\pi h)}
    \left(
      1
      +\frac{c_1}{\log(\pi h)}
      +\frac{c_2}{\log^2(\pi h)}
      +\frac{c_3}{\log^3(\pi h)}
    \right). \label{flatprediction}
\end{gather}
However, this assumes that each number $h$ occurs as $h(d)$ with equal frequency, which is inconsistent with Corollary \ref{clcorollary}.  We thus introduce the correction factor
\newcommand{\bigproduct}{\prod_{\substack{p\geq 3 \text{ prime} \\ n\geq 0 \\ p^n \parallel h  }}}
\begin{gather}
\begin{split}
\tilde{\mf{c}}(h) := \bigproduct \frac{\prob(p^n \parallel h' : \; h' \text{ is an odd class number})}{\prob(p^n \parallel h' : \; h' \text{ is an odd integer})} &=
\bigproduct \frac{p^{-n} \prod_{i = n+1}^\infty \left( 1 - \frac{1}{p^i} \right)}{ p^{-(n+1)}(p-1)} \\
&= \bigproduct \left( 1 - \frac{1}{p} \right)^{-1} \prod_{i = n+1}^\infty \left( 1 - \frac{1}{p^i} \right)
\end{split} \label{localfactor}
\end{gather}
In the above, in addition to using \eqref{defofconditionalprob}, we are also using
\[ 
\prob(\text{$h$ satisfies $P$} : \; \text{$h$ is an odd integer}) := \lim_{x \rightarrow \infty} \frac{\#\{\text{$h \in \mbn : h \text{ is odd,} \; h \leq x, \, h$ satisfies $P$}\}}{\#\{\text{$h \in \mbn : \; h \text{ is odd,} \; h \leq x$} \} }.
\]
We emphasize that $n = 0$ is allowed in \eqref{localfactor}, and so the expression defining $\tilde{\mf{c}}(h)$ is an \emph{infinite} product.  Note that, heuristically at least, we have
\begin{equation} \label{averagestoone}
\sum_{h \leq H \atop h \text{ odd}} \tilde{\mf{c}}(h) \sim \frac{H}{2}, \quad \quad (H \longrightarrow \infty).
\end{equation}
Indeed, if
$\ds \sum_{h \leq H \atop h \text{ odd}} \tilde{\mf{c}}(h) \sim B
\cdot \frac{H}{2}$,
then $B$ has expected value
\[
\begin{split}
B &= \prod_{p \text{ odd}} \sum_{n = 0}^\infty \prob(p^n \parallel h : \; h \text{ is an odd integer}) \cdot \frac{\prob(p^n \parallel h : \; h \text{ is an odd class number})}{\prob(p^n \parallel h : \; h \text{ is an odd integer})} \\
&= \prod_{p \text{ odd}} \sum_{n = 0}^\infty \prob(p^n \parallel h : \; h \text{ is an odd class number}) \\
&= 1.
\end{split}
\]
Noting that
\[
\tilde{\mf{c}}(h) = \prod_{\ell = 3 \atop \ell \text{ prime}}^\infty \prod_{i=2}^\infty \left( 1 - \frac{1}{\ell^i} \right) \cdot \mf{c}(h)
= \dfrac{\mathfrak C}{15}\cdot \mathfrak c(h),
\]
we get Conjecture \ref{refinedsoundconjecture} by multiplying the average order \eqref{flatprediction} with the local correction factor \eqref{localfactor}.

\subsection{Dampening the three divisibility bias}
\label{sec:damp-three-divis}
Given an odd natural number $h$, let $k\leq 11$ and $n\leq k-3$ be such that $h\in[3^k,3^{k+1})$ and $3^n\parallel h$.
We define the adjustment $\operatorname{pred}'(h)$ by replacing in $\operatorname{pred}(h)$ the factor
\[\prob(3^n \parallel h' : \; h' \text{ is an odd class number})=3^{-n} \prod_{i = n+1}^\infty \left( 1 - \frac{1}{3^i} \right)\]
in $\tilde {\mf c}(h)$ coming from the Cohen-Lenstra heuristic, by the observed value
\begin{gather}
  \prob(3^n \parallel h' : \; h' \text{ is an odd class number} \in [3^k,3^{k+1})) =
  \left .
    \sum_{\substack{ h'\in[3^k,3^{k+1}) \\ h'\text{ odd}  \\ 3^n \parallel h' }} \mathcal F(h')
  \right /
    \sum_{\substack{ h'\in[3^k,3^{k+1}) \\ h'\text{ odd}  }} \mathcal F(h')
\end{gather}
using our computed values of $\mc F(h)$ (see Section \ref{numerics}).

As mentioned earlier, this three divisibility bias is connected to other recent work:  Belabas \cite{belabas-fast-cubic} noted rather slow convergence in the Davenport-Heilbronn asymptotic average of $H(d)[3]$; Roberts
\cite{roberts-cubic-fields-secondary-term} later conjectured that this
was due to a negative second order term of size $X^{5/6}$ (here the
main term is of order $X$).  Robert's conjecture was recently proved
in
\cite{bhargava-shankar-tsimerman-davenport-heilbronn-secondary,taniguchi-thorne-secondary-term-cubic-fields}.

\subsection{Heuristics for Conjecture \ref{mainconjecture}}

We now give heuristics supporting Conjecture \ref{mainconjecture}.  Let $\gl = (n_1, n_2, \dots, n_r)$ be a partition of $n$, so that
\begin{equation} \label{partitionofn}
n_1 \geq n_2 \geq \dots \geq n_r \geq 1
\end{equation}
and $n_1 + n_2 + \dots + n_r = n$, and let $\ds G_\gl(p) := \bigoplus_{i=1}^r \mbz/p^{n_i}\mbz$ be the corresponding abelian group.  By the assumption \eqref{factorizationofmcFofG}, the expected value of $\mc{F}(G_\gl(p))$ is
\begin{equation} \label{decompatGofp}
\mc{F}(G_\gl(p)) \approx P(G_\gl(p)) \cdot \mc{F}(p^n).
\end{equation}
The following proposition evaluates $P(G_\gl(p))$ explicitly.  Let $k$ be the number of distinct parts of $\gl$, and let $m_1, m_2, \dots, m_k$ be the multiplicity of each distinct part.  Thus, \eqref{partitionofn} reads
\[
n_1 = \dots = n_{m_1} > n_{m_1 +1} = \dots = n_{m_1 + m_2} > \dots > n_{\sum_{i=1}^{k-1}m_i+1} = \dots = n_{\sum_{i=1}^{k}m_i}.
\]
\begin{proposition} \label{mainthm}
With the notation just given, we have
\begin{equation} \label{evaluationofautgnu}
P( G_\gl(p) )  = p^{\cyc(\gl) - n} \cdot \prod_{i = 1}^k \prod_{j = 1}^{m_i} \left( 1 - \frac{1}{p^j} \right)^{-1} \prod_{i=1}^n \left( 1 - \frac{1}{p^i} \right),
\end{equation}
where $\cyc(\gl)$ is given by \eqref{defofcyclicityindex}.  In particular, as $p \longrightarrow \infty$, we have that
\begin{equation} \label{usefulasymptotic}
P(G_\gl(p)) \sim p^{\cyc(\gl)-n}.
\end{equation}
\end{proposition}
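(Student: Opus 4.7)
My plan is to reduce the claim to a known explicit formula for the automorphism group of a finite abelian $p$-group, and then combine it with \lemref{sumofclprobslemma}. Since
\[
P(G_\gl(p)) = \frac{1/|\aut(G_\gl(p))|}{\sum_{|G'|=p^n}1/|\aut(G')|},
\]
and the denominator is, by \lemref{sumofclprobslemma}, equal to $p^{-n}\prod_{i=1}^n(1-p^{-i})^{-1}$, the proposition reduces to the assertion
\[
|\aut(G_\gl(p))| \;=\; p^{2n-\cyc(\gl)}\,\prod_{i=1}^k\prod_{j=1}^{m_i}\bigl(1-p^{-j}\bigr).
\]

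To establish this formula, I would represent an endomorphism of $G_\gl(p)$ as an $r\times r$ block matrix $(A_{ij})$ where $A_{ij}\colon \mbz/p^{n_j}\mbz\to\mbz/p^{n_i}\mbz$; such a map is encoded by an element of $p^{\max(0,n_i-n_j)}\mbz/p^{n_i}\mbz$, giving $p^{\min(n_i,n_j)}$ choices for each entry. This yields a total of $p^{\sum_{i,j}\min(n_i,n_j)}$ endomorphisms. The standard counting argument (cf.\ Hillar--Rhea) then shows that the proportion of these which are automorphisms is governed by reduction mod $p$: the induced map on $G_\gl(p)/pG_\gl(p)$ must be invertible when restricted appropriately to each isotypic block $(\mbz/p\mbz)^{m_i}$, contributing the factor $\prod_{i=1}^k\prod_{j=1}^{m_i}(1-p^{-j})$. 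Combining these counts, and using the combinatorial identity
\[
\sum_{i,j}\min(n_i,n_j) \;=\; \sum_{i=1}^r(2i-1)n_i \;=\; 2n-\cyc(\gl)
\]
(which follows directly from \eqref{defofcyclicityindex} by pairing terms in the double sum), gives the claimed expression for $|\aut(G_\gl(p))|$.

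Substituting into the formula for $P(G_\gl(p))$ and simplifying yields the stated equation \eqref{evaluationofautgnu}. Finally, to obtain the asymptotic \eqref{usefulasymptotic}, I would observe that each of the finite products $\prod_{j=1}^{m_i}(1-p^{-j})^{-1}$ and $\prod_{i=1}^n(1-p^{-i})$ tends to $1$ as $p\to\infty$, leaving only the factor $p^{\cyc(\gl)-n}$.

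The only genuinely nontrivial step is the verification of the automorphism-count formula, but this is classical; the rest is algebraic manipulation together with the combinatorial identity relating $\sum_{i,j}\min(n_i,n_j)$ to $\cyc(\gl)$.
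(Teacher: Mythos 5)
Your proof is correct and follows essentially the same route as the paper: both reduce the proposition to the classical formula $|\aut(G_\gl(p))| = p^{2n-\cyc(\gl)}\prod_{i=1}^k\prod_{j=1}^{m_i}\bigl(1-p^{-j}\bigr)$ combined with \lemref{sumofclprobslemma}, the paper simply citing Ranum and Hillar--Rhea for that formula where you sketch its derivation (your identity $\sum_{i,j}\min(n_i,n_j)=\sum_{i=1}^r(2i-1)n_i=2n-\cyc(\gl)$ checks out, since $\min(n_i,n_j)=n_{\max(i,j)}$ and each index $k$ arises as $\max(i,j)$ for exactly $2k-1$ pairs). The extra detail is welcome but not a different argument.
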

\begin{proof}
The statement follows immediately by combining Lemma \ref{sumofclprobslemma} with the formula
\[
| \aut(G_\gl(p)) | = p^{2n-\cyc(\gl)} \prod_{i = 1}^k \prod_{j = 1}^{m_i} \left( 1 - \frac{1}{p^j} \right).
\]
This formula is classical, having appeared in a 1907 paper of A. Ranum \cite{ranum}.  For a more modern exposition, see \cite{hillarrhea} or \cite{lengler2}.
\end{proof}
Inserting \eqref{usefulasymptotic} together with Conjecture \ref{refinedsoundconjecture} into the right-hand side of \eqref{decompatGofp}, and observing that $\mf{c}(p^n) \longrightarrow 1$ as $p \longrightarrow \infty$, we see that Conjecture \ref{mainconjecture} follows.  In the case $\cyc(\gl) = 0$ we write
\[
\sum_{p \leq x} \mc{F}(G_\gl(p)) \sim \sum_{p \leq x} P(G_\gl(p)) \cdot \mc{F}(p^n) \sim  \sum_{p \leq x} \mf{C} \cdot \frac{p^{\cyc(\gl)}}{\log (p^n)}
\]
and use partial summation.

\section{Attainable partitions are very rare} 
\label{partitions}

We now prove Theorem \ref{camstheorem}. 
To this end, let $c_{n,r}$ denote the number of attainable
partitions of $n$ into $r$ parts.  Work of Sellers (\cite{Sel1},\cite{Sel2}) leads to a generating
function for the number of partitions of $n$ which satisfy a certain
type of linear inequality amongst their parts:
\begin{Theorem}[\cite{Sel1},\cite{Sel2}]
The number of partitions $\gl = (n_1,n_2,\ldots, n_r)$ of
$n$ into $r$ non-negative parts satisfying the inequality
$n_1\geq \sum_{i=2}^r b_in_i$, for some non-negative
integers $b_i$ with $b_2>0$, has generating function
\[
\frac{1}{(1-x)(1-x^{b_2+1})(1-x^{b_2+b_3+2})(1-x^{b_2+b_3+b_4+3})\cdots(1-x^{b_2+b_3+\cdots+b_r+r-1})}.
\]
\end{Theorem}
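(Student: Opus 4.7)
The plan is to establish a weight-preserving bijection between the constrained partitions being counted and the set of arbitrary tuples $(a_1, a_2, \ldots, a_r) \in \mathbb{Z}_{\geq 0}^r$, where coordinate $a_k$ contributes a prescribed amount to the total $n$. Once this bijection is in hand, the generating function factors immediately as a product of $r$ geometric series, matching the stated formula.

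First I would introduce the change of variables
\[
a_r := n_r, \qquad a_k := n_k - n_{k+1} \;\; (2 \le k \le r-1), \qquad a_1 := n_1 - \sum_{i=2}^r b_i n_i,
\]
so that $a_1$ records the slack in the linear inequality while $a_2, \ldots, a_r$ record the consecutive gaps in the weakly decreasing tail. The conditions $n_2 \ge n_3 \ge \cdots \ge n_r \ge 0$ and $n_1 \ge \sum_{i=2}^r b_i n_i$ translate exactly into $a_1, \ldots, a_r \ge 0$. Conversely, given any $(a_1, \ldots, a_r) \in \mathbb{Z}_{\geq 0}^r$, one recovers $n_i = \sum_{j=i}^r a_j$ for $i \ge 2$ and $n_1 = a_1 + \sum_{i=2}^r b_i n_i$, and one checks that $n_1 \ge n_2$ is automatic: indeed $n_1 - n_2 = a_1 + \sum_{j=2}^r \bigl((b_2 + \cdots + b_j) - 1\bigr) a_j \ge 0$ using $b_2 \ge 1$, which is the only place where the hypothesis $b_2 > 0$ enters in an essential way.

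Next I would compute the total weight. Swapping the order of summation,
\[
\sum_{i=2}^r n_i \;=\; \sum_{i=2}^r \sum_{j=i}^r a_j \;=\; \sum_{j=2}^r (j-1)\, a_j,
\qquad
\sum_{i=2}^r b_i n_i \;=\; \sum_{j=2}^r (b_2 + b_3 + \cdots + b_j)\, a_j,
\]
so that
\[
n \;=\; n_1 + \sum_{i=2}^r n_i \;=\; a_1 + \sum_{j=2}^r \bigl(b_2 + b_3 + \cdots + b_j + j - 1\bigr)\, a_j.
\]
Consequently,
\[
\sum_{\lambda} x^{|\lambda|}
\;=\;
\sum_{a_1 \ge 0} x^{a_1} \prod_{j=2}^r \sum_{a_j \ge 0} x^{(b_2 + \cdots + b_j + j - 1)\, a_j}
\;=\;
\frac{1}{(1-x)} \prod_{j=2}^r \frac{1}{1 - x^{b_2 + b_3 + \cdots + b_j + j - 1}},
\]
which is precisely the claimed generating function.

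The only genuine subtlety is verifying that the bijection is well-defined and its inverse lands inside the correct set of partitions; this is exactly the point where the assumption $b_2 > 0$ forces the chain inequality $n_1 \ge n_2$ to be implied by the linear inequality plus non-negativity of the slack $a_1$. Once that verification is in place, the calculation above is purely formal. I would finish by remarking that this argument recovers the Sellers identities in \cite{Sel1,Sel2} via a direct combinatorial substitution, bypassing the original generating-function manipulations.
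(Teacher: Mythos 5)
Your proof is correct. Be aware, though, that the paper never proves this statement itself: it is imported wholesale as an external result of Sellers \cite{Sel1}, \cite{Sel2}, and the paper's own work begins only afterwards, with the shift to positive parts and the specialization $b_i=2i-3$ in Corollary \ref{GenFun}. So the comparison here is between your argument and a bare citation, and what your proof buys is a short, self-contained replacement for the reference. Your substitution $a_1=n_1-\sum_{i=2}^r b_i n_i$, $a_k=n_k-n_{k+1}$ for $2\le k\le r-1$, $a_r=n_r$ is the natural slack-plus-differences bijection onto $\mathbb{Z}_{\geq 0}^r$; the weight identity $n=a_1+\sum_{j=2}^r\bigl(b_2+\cdots+b_j+j-1\bigr)a_j$ is right, and it factors the generating function into exactly the claimed product of geometric series. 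You also correctly isolate the one non-formal point: for the inverse map to land among genuine partitions one needs $n_1\ge n_2$ to be automatic, which holds because every coefficient $b_2+\cdots+b_j-1$ is nonnegative once $b_2\ge 1$. That is indeed precisely where the hypothesis $b_2>0$ enters, and it is essential: for $b_2=0$, $r=2$ the stated formula would read $1/(1-x)^2$, which counts compositions rather than partitions, so the theorem is false without it. In short, your argument is complete and, as a bonus, makes transparent a hypothesis that the paper (and the statement as quoted) leaves unexplained.
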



Applying this result to our context requires a slight modification,
and leads to the following generating function for the attainable
partitions.

\begin{cor}\label{GenFun}
The generating function $C_r(x)$ for the sequence $c_{n,r}$ of
length-$r$
attainable partitions of $n$ is given by
\[
C_r(x)=\sum_{n=0}^\infty c_{n,r}x^n=
\frac{x^{r^2-r}}{(1-x)\prod_{j=2}^r(1-x^{j^2-j})}.
\]
\end{cor}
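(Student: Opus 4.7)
The plan is to establish Corollary~\ref{GenFun} by an explicit bijection between attainable partitions of $n$ into exactly $r$ (positive) parts and non-negative integer solutions of a linear Diophantine equation, and then to read off the generating function as a product of geometric series. Unfolding definitions, attainability reads $n_1 \geq \sum_{i=2}^r (2i-3) n_i$, which is of the form treated by Sellers' theorem with $b_i = 2i-3$. However, Sellers' statement applies to partitions into non-negative parts, whereas we require $n_r \geq 1$, so I would redo the parametrization by hand; this keeps the argument self-contained and handles the strict positivity cleanly.

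First I would pass from $(n_1, \ldots, n_r)$ to the differences $m_j := n_j - n_{j+1}$ for $1 \leq j < r$ and $m_r := n_r$. Under this substitution, the weakly-decreasing condition becomes $m_j \geq 0$, strict positivity of the smallest part becomes $m_r \geq 1$, and the weight becomes $n = \sum_{j=1}^r j m_j$. Using the elementary identity $\sum_{i=2}^j (2i-3) = (j-1)^2$, the attainability condition rewrites as
\[
m_1 \;\geq\; \sum_{j=2}^r \big((j-1)^2 - 1\big)\, m_j \;=\; \sum_{j=2}^r j(j-2)\, m_j,
\]
where the $j=2$ term vanishes (so $m_2$ is unconstrained).

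Finally I would linearize by introducing two slack variables: write $m_1 = m_1' + \sum_{j=2}^r j(j-2) m_j$ with $m_1' \geq 0$, and $m_r = m_r' + 1$ with $m_r' \geq 0$. A short calculation using $j(j-2) + j = j(j-1)$ yields
\[
n - r(r-1) \;=\; m_1' \;+\; r(r-1)\, m_r' \;+\; \sum_{j=2}^{r-1} j(j-1)\, m_j,
\]
with $m_1', m_r', m_2, \ldots, m_{r-1}$ ranging independently over $\mbz_{\geq 0}$. Reading off the generating function as a product of geometric series, and observing that the factor $(1 - x^{r(r-1)})^{-1}$ contributed by $m_r'$ merges with $\prod_{j=2}^{r-1}(1 - x^{j(j-1)})^{-1}$ to form $\prod_{j=2}^{r}(1 - x^{j^2-j})^{-1}$, one obtains
\[
\sum_n c_{n,r}\, x^n \;=\; \frac{x^{r^2-r}}{(1-x)\, \prod_{j=2}^r (1 - x^{j^2-j})},
\]
which is the claim. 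I do not expect any real obstacle; the entire calculation reduces to verifying the two arithmetic identities above and checking that the substitution is a bijection, both of which are immediate from the explicit formulas.
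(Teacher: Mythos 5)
Your argument is correct, and it takes a genuinely different route from the paper. The paper's proof quotes Sellers' generating-function theorem for partitions into \emph{non-negative} parts satisfying $n_1\geq\sum_{i=2}^r b_i n_i$ as a black box, and then transports it to positive parts via the shift $(n_1,\ldots,n_r)\mapsto(n_1+\sum_{i=2}^r b_i,\,n_2+1,\ldots,n_r+1)$, which multiplies the generating function by $x^{(r-1)^2+(r-1)}=x^{r^2-r}$. You instead reprove the relevant special case from scratch: passing to the difference variables $m_j=n_j-n_{j+1}$ turns the weight into $\sum_j j\,m_j$ and (via $\sum_{i=2}^{j}(2i-3)=(j-1)^2$) the attainability constraint into $m_1\geq\sum_{j\geq 2}j(j-2)m_j$, and the two slack substitutions $m_1=m_1'+\sum_{j\geq2}j(j-2)m_j$ and $m_r=m_r'+1$ then exhibit a bijection with free non-negative variables of weights $1,\,j(j-1)$ for $2\leq j\leq r-1$, and $r(r-1)$, shifted by $r^2-r$; the identity $j(j-2)+j=j(j-1)$ makes the exponents come out identically to the paper's. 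What your approach buys is self-containedness (no appeal to Sellers, and the positivity of the smallest part is built directly into the substitution rather than handled by a separate shift), at the cost of a slightly longer computation; the paper's route is shorter on the page but leans on the external citation and on correctly adapting it from non-negative to positive parts. One trivial caveat common to both arguments: the formula and your substitution implicitly assume $r\geq 2$ (for $r=1$ the two slack substitutions collide and the stated formula gives $c_{0,1}=1$ rather than $0$), but this is immaterial for the application in Theorem~\ref{camstheorem}.
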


\begin{proof}
First, observe that by definition we require our partitions to be
comprised of positive (rather than non-negative) parts. To accommodate
this change, we use the easily-verified bijection between partitions
of $n$ into $r$ \emph{non-negative} parts satisfying the inequality
$b_1\geq \sum_{i=2}^r b_in_i$ and partitions of $n+\sum_{i=2}^r b_i+
(r-1)$ into $r$ \emph{positive} parts satisfying the same inequality,
given by
\[
(n_1, \ldots, n_r) \to (n_1+\sum_{i=2}^r b_i , n_2+1 , ... , n_r+1)
\]

Thus the analogous generating function to Seller's above for
partitions into positive parts is simply a shift of indices away,
given by 
\[
\frac{x^{b_2+b_3+\cdots+b_r+r-1}}{(1-x)(1-x^{b_2+1})(1-x^{b_2+b_3+2})
(1-x^{b_2+b_3+b_4+3})\cdots(1-x^{b_2+b_3+\cdots+b_r+r-1})}.
\]
Finally, we apply this to attainable
partitions, which by definition satisfy an inequality in the form of
the theorem, with coefficients $b_i=2i-3$. The corollary then follows
from the observation
\[
j-1+\sum_{i=2}^j b_i=j-1+\sum_{i=2}^j (2i-3)=j^2-j
\]
for any $2\leq j\leq r$.

\end{proof}

Basic results about growth rates about coefficients of rational
generating functions leads to an asymptotic count of attainable
partitions:

\begin{cor}\label{Factorial}
For fixed $r$, the proportion of length-$r$ partitions of $n$ which
are attainable is asymptotically $\frac{1}{(r-1)!}$.
\end{cor}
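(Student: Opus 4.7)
The plan is to extract leading-order asymptotics for $c_{n,r}$ from the generating function in Corollary~\ref{GenFun}, extract the analogous leading-order asymptotics for the number $p_{n,r}$ of all length-$r$ partitions of $n$ from the standard generating function
\[
P_r(x) \;=\; \sum_{n=0}^{\infty} p_{n,r}\, x^n \;=\; \frac{x^{r}}{\prod_{j=1}^{r}(1-x^{j})},
\]
and then take the ratio. Both generating functions are rational, so the asymptotics are governed by the dominant singularity at $x=1$.

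First I would observe that the denominator of $C_r(x)$ is $(1-x)\prod_{j=2}^{r}(1-x^{j^2-j})$, which gives a pole of order exactly $r$ at $x=1$ (one factor from $(1-x)$ and one each from the $r-1$ factors $(1-x^{j^2-j})$, $j=2,\dots,r$). At any other root of unity, the order of vanishing is strictly smaller than $r$ since not all cyclotomic factors are hit simultaneously. Standard partial-fraction/pole analysis (equivalently, Flajolet--Sedgewick transfer theorems for rational functions) then yields the principal part
\[
C_r(x) \;\sim\; \frac{1}{(1-x)^r}\cdot \frac{1}{\prod_{j=2}^{r} j(j-1)}
\qquad (x\to 1),
\]
since $(1-x^{k}) = k(1-x)(1+O(1-x))$, and the numerator $x^{r^2-r}$ evaluates to $1$ at $x=1$. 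Using $\prod_{j=2}^{r}j(j-1) = r!(r-1)!$ and extracting coefficients via $[x^n](1-x)^{-r}=\binom{n+r-1}{r-1}\sim n^{r-1}/(r-1)!$, I get
\[
c_{n,r} \;\sim\; \frac{n^{r-1}}{r!\,(r-1)!^{2}} \qquad (n\to\infty).
\]

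Next I would run exactly the same argument for $P_r(x)$: the denominator $\prod_{j=1}^{r}(1-x^{j})$ has a pole of order $r$ at $x=1$, with residue governed by $\prod_{j=1}^{r} j = r!$, so
\[
P_r(x) \;\sim\; \frac{1}{r!\,(1-x)^{r}}, \qquad
p_{n,r} \;\sim\; \frac{n^{r-1}}{r!\,(r-1)!}.
\]
Dividing,
\[
\frac{c_{n,r}}{p_{n,r}} \;\sim\; \frac{n^{r-1}/\bigl(r!\,(r-1)!^{2}\bigr)}{n^{r-1}/\bigl(r!\,(r-1)!\bigr)} \;=\; \frac{1}{(r-1)!},
\]
which is the claim.

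The only genuine content is the pole-order bookkeeping: one must verify that no other singularity of $C_r(x)$ on the unit circle matches the order $r$ at $x=1$. This is immediate because a root of unity $\zeta \neq 1$ is a root of $1-x^{j^2-j}$ only for those $j$ with $j(j-1)$ divisible by the order of $\zeta$, and it is never a root of $1-x$; hence the multiplicity there is at most $r-1$. The remaining computations (the limits of $(1-x^k)/(1-x)$ and the standard coefficient extraction) are routine, so I do not anticipate any serious obstacle.
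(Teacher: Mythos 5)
Your proposal is correct and follows essentially the same route as the paper: both isolate the order-$r$ pole of $C_r(x)$ and of the standard generating function for $p_{n,r}$ at $x=1$, extract $c_{n,r}\sim n^{r-1}/\bigl(r!\,(r-1)!^2\bigr)$ and $p_{n,r}\sim n^{r-1}/\bigl(r!\,(r-1)!\bigr)$ by singularity analysis of rational functions, and take the ratio. Your explicit check that no other root of unity contributes a pole of order $r$ is a point the paper leaves implicit, but the argument is otherwise identical.
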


\begin{proof}

We rewrite our expression for $C_r(x)$ to isolate its singularity on
the unit circle with the highest multiplicity ($x=1$ with multiplicity
$r$) and apply the techniques of singularity analysis. Namely, we
write
\[
\sum_{n=0}^\infty c_{n,r}x^n=\frac{1}{(1-x)^r}\cdot\frac{x^{r^2-r+1}}
{\prod_{j=2}^{r}(1+x+x^2+\cdots+x^{j^2-j-1})}=:\frac{f_r(x)}{(1-x)^r},
\]
where here $f_r(x)$ is analytic at $x=1$. A partial fraction
decomposition
shows that the asymptotics for the coefficients are governed by this
singularity (see, e.g., \cite[p. 256]{flajolet-sedgewick-book}), and we obtain
\[
c_{n,r} \sim \frac{f_r(1)n^{r-1}}{(r-1)!}=\frac{n^{r-1}}{(r-1)!\prod_{j=2}^{r}(j^2-j)}=\frac{n^{r-1}}{r!(r-1)!^2}.
\]
Similarly, by the well-known generating function
\[
\sum_{n=0}^\infty p_{n,r}x^n=\frac{x^r}{\prod_{j=1}^r(1-x^j)}=\frac{1}{(1-x)^r}\cdot\frac{x^r}{\prod_{j=2}^r(1+x+x^2+\cdots+x^{j-1})},
\]
for $p_{n,r}$, the total number of length-$r$ partitions of $n$, we conclude that
\[
p_{n,r}\sim \frac{n^{r-1}}{r!(r-1)!}.
\]
Taking the ratio of these gives 
\[
\lim\limits_{n\to\infty}\frac{c_{n,r}}{p_{n,r}}=\lim\limits_{n\to\infty}\frac{\frac{n^{r-1}}{r!(r-1)!^2}}{\frac{n^{r-1}}{r!(r-1)!}}=\frac{1}{(r-1)!},
\]
proving the result.
\end{proof}

Moving from the fixed rank to the fixed order case, we set
\[
c_n=\sum_{r=1}^n c_{n,r},
\]
the total number of partitions of $n$ which are attainable. 

\begin{Lemma}\label{RecRel}
For fixed $n$, the numbers $c_{n,r}$ satisfy the recurrence relation
\[
c_{n,r+1}=\sum_{i=0}^{\lfloor \frac{n-2r}{r^2+r}\rfloor}c_{n-2r-i(r^2+r),r}.
\]
\end{Lemma}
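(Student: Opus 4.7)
The plan is to derive the recurrence directly from the closed-form generating function provided by Corollary \ref{GenFun}. Writing
\[
C_r(x) \;=\; \frac{x^{r^2-r}}{(1-x)\prod_{j=2}^r(1-x^{j^2-j})},
\]
almost everything in the ratio $C_{r+1}(x)/C_r(x)$ cancels: the factor $(1-x)$ and the factors $(1-x^{j^2-j})$ for $j=2,\dots,r$ appear in both denominators, leaving only the new denominator factor $1-x^{(r+1)^2-(r+1)} = 1-x^{r^2+r}$, while the numerator contributes $x^{((r+1)^2-(r+1))-(r^2-r)} = x^{2r}$. Hence
\[
C_{r+1}(x) \;=\; C_r(x)\cdot \frac{x^{2r}}{1-x^{r^2+r}}.
\]

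Expanding the geometric series $\frac{1}{1-x^{r^2+r}} = \sum_{i\geq 0} x^{i(r^2+r)}$, I would rewrite
\[
C_{r+1}(x) \;=\; \sum_{i\geq 0} x^{2r+i(r^2+r)}\cdot C_r(x),
\]
and extract the coefficient of $x^n$ from both sides. Using the convention that $c_{m,r}=0$ for $m<0$, only finitely many terms contribute on the right, namely those with $n-2r-i(r^2+r)\geq 0$, which gives the bound $i\leq \lfloor (n-2r)/(r^2+r)\rfloor$ and yields exactly the claimed identity
\[
c_{n,r+1} \;=\; \sum_{i=0}^{\lfloor (n-2r)/(r^2+r)\rfloor} c_{n-2r-i(r^2+r),\,r}.
\]

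Because the heavy lifting is already contained in Corollary \ref{GenFun}, there is no genuine obstacle to overcome here; the whole argument reduces to telescoping cancellation followed by routine coefficient extraction. The only care needed is bookkeeping with the exponents $r^2-r$, $(r+1)^2-(r+1)$, and $2r$, and verifying the upper summation limit. One could alternatively attempt a bijective proof — for instance, by interpreting the index $i$ as counting how many times a certain ``staircase'' of size $r^2+r$ is subtracted from the leading parts when a partition of length $r+1$ is collapsed to one of length $r$ — but given the compactness of $C_r(x)$ this would be more effort than the algebraic route.
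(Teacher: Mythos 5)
Your argument is correct and is exactly the proof given in the paper: both compute the ratio $C_{r+1}(x)/C_r(x)=x^{2r}/(1-x^{r^2+r})$ from Corollary~\ref{GenFun}, expand the geometric series, and equate coefficients of $x^n$. The exponent bookkeeping ($(r+1)^2-(r+1)=r^2+r$, numerator shift $2r$) and the summation cutoff are all as in the paper.
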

\begin{proof}
From Corollary \ref{GenFun} we easily deduce the recurrence relation between the successive generating functions:
\[
C_{r+1}(x)=\frac{x^{2r}}{1-x^{r^2+r}}\, C_r(x)=(1+x^{r^2+r}+x^{2(r^2+r)}+\cdots)(x^{2r}C_r(x)),
\]
from which the lemma follows by equating coefficients.
\end{proof}
We prove by induction that for fixed $n\geq 1$ we have $c_{n,r}\leq
\frac{n^{r-1}}{(r-1)!^2}$ for all $r$.  This is trivial for $r=1$ since
$c_{n,1}=1$.  For the inductive step, the recurrence relation in Lemma
\ref{RecRel} gives {\small
\[
c_{n,r+1}=\sum_{i=0}^{\lfloor \frac{n-2r}{r^2+r}\rfloor}c_{n-2r-i(r^2+r),r}\leq \frac{1}{(r-1)!^2}\sum_{i=0}^{\lfloor \frac{n-2r}{r^2+r}\rfloor}(n-2r-i(r^2+r))^{r-1}.
\]}
The terms in this sum are positive and decreasing as a function of
$i$, and so we can compare to the integral:
\begin{align*}
\sum_{i=0}^{\lfloor \frac{n-2r}{r^2+r}\rfloor}(n-2r-i(r^2+r))^{r-1}&\leq (n-2r)^{r-1}+\int_0^{\lfloor \frac{n-2r}{r^2+r}\rfloor}(n-2r-i(r^2+r))^{r-1}\,di\\
&=n^{r-1}+\frac{n^r}{r(r^2+r)}-\frac{(n-2r-\lfloor \frac{n-2r}{r^2+r}\rfloor(r^2+r))^{r}}{r(r^2+r)}
\end{align*}
Since the latter term is positive and $r^2+r\leq n$, we can continue
\[
c_{n,r+1}\leq\frac{1}{(r-1)!^2}\sum_{i=0}^{\lfloor \frac{n-2r}{r^2+r}\rfloor}(n-2r-i(r^2+r))^{r-1}\leq \frac{1}{(r-1)!^2}\frac{rn^r+n^r}{r(r^2+r)}=\frac{n^r}{r!^2},
\]
completing the induction.  Now, summing over $r$ gives
\[
c_n=\sum_{r=1}^n c_{n,r}\leq \sum_{r=1}^n \frac{n^{r-1}}{(r-1)!^2}\leq \sum_{r=1}^\infty \frac{n^{r-1}}{(r-1)!^2}=I_0(2\sqrt{n}),
\]
where $I_0(x)$ denotes the 0-th modified Bessel function of the first
kind.  By the asymptotic $I_0(x)\sim \frac{e^x}{\sqrt{2\pi x}}$, we
can compare the formula for $c_n$ with the famous asymptotic of
Hardy-Ramanujan \cite{HR}, $p_n\sim
\frac{e^{\pi\sqrt{2n/3}}}{4n\sqrt{3}}$, for the number of partitions
of $n$.  Taking the ratio of the two gives
\[
\frac{c_n}{p_n} \ll n^{3/4}e^{(2-\sqrt{\frac{2}{3}}\pi)\sqrt{n}},
\]
proving Theorem \ref{camstheorem}.

\begin{remark}
Since $c_{n,1}=1$ for all $n\geq 1$ (and $c_{0,1}=0$), Lemma \ref{RecRel} provides
explicit formulas for the number of attainable partitions of $n$ into a small
number of parts.  For example,
\[
c_{n,2}=\sum_{i=0}^{\lfloor \frac{n-2}{2}\rfloor}c_{n-2-2i,1}=\left\lfloor \frac{n-1}{2}\right\rfloor,
\]
agreeing with the easily-checked fact that the partition $[a,b]$ of
$n$ is attainable if and only if $a>b$.  Less trivially, if we
temporarily adopt the simplifying convention that $\lfloor x\rfloor=0$
for $x<0$, we have
\[
c_{n,3}=\sum_{i=0}^{\lfloor \frac{n-4}{6}\rfloor}c_{n-4-6i,2}=\sum_{i=0}^{\lfloor \frac{n-4}{6}\rfloor} \bigg\lfloor \frac{n-5-6i}{2}\bigg\rfloor.
\]
This leads to Rademacher-type formulas for computing the exact value of $c_n$.
\end{remark}

\section{Computer computations} \label{numerics}

With the aid of a supercomputer and assuming GRH, we have
computed $\mathcal F(h)$ and $\mathcal F(G)$ for all odd $h<10^6$ and
all $p$-groups $G$ of odd size at most $10^6$.
We have made the computed values available online, see the references \cite{data-Fh}, \cite{data-FG}, \cite{data-d-Hd}. In this section we will describe how this computation was accomplished.

As already noted, by genus theory, if $-q<-8$ is a fundamental discriminant, then $h(-q)$ is odd precisely when $q$ is prime.
Corollary 1.3 in \cite{soundbound} states that under GRH,
\begin{gather} \label{soundboundeq}
  h(-q) \geq \frac{\pi}{12e^\gamma}\sqrt q\left(
    \log \log q - \log 2 + \frac 12 + \frac1{\log \log q} + \frac{14 \log\log q}{\log q}
  \right)\inv
\end{gather}
if $-q$ is a fundamental discriminant such that $q\geq 10^{10}$. It is easy to verify that the right-hand side above is monotonic for $q\geq 10^{10}$.
This implies that if $q\geq 1.1881\cdot 10^{15}$ then $h(-q)>10^6$. 
\newcommand{\dinterval}{\mathfrak D'(1.1881\cdot 10^{15})}
Thus it suffices to consider only discriminants in
$\dinterval$.

We use the procedure \verb|quadclassunit0| in the
computer package PARI 2.7.3 to compute the class groups $H(d)$; this procedure guarantees correct results
assuming GRH, cf. \cite[Section~3.4.70]{pari}. In principle, doing this for every $d\in\dinterval$ would suffice, but a number of practical speedups were necessary. 

\subsection{Brief description of the algorithm}

We give a brief but not complete description of our algorithm.
\newcommand{\happrox}{h_{\text{approx}}}
Our computer program iterates over all $d\in\dinterval$ and records for each odd $h<10^6$ and each noncyclic 
$p$-group $G$, how many times a group of order $h$ or a group
isomorphic to $G$ is found, avoiding to compute $h(d)$ or $H(d)$ whenever not necessary (note that if $G$ is a cyclic $p$-group, then the value of $\mathcal F(G)$ can be calculated from the data that we are keeping).

Given a fundamental discriminant $d\in \dinterval$, we begin by
calculating an approximation $\happrox$ of $h(d)$ together with an
explicit error factor $E$, by setting
$\happrox \defeq \frac{\sqrt{\abs d}}\pi e^{\nu(x_1,d)}$ and
$E \defeq e^{\eta(x_1, x_2, d)}$ for suitable $x_1,x_2$ using
Proposition \ref{propnum} below.  If we already at this stage can
prove that $h(d)>10^6$ (that is, if $\happrox/ E > 10^6$), then we
discard $d$.  This cuts down our search space by roughly a factor of
100, as the lower bound \eqref{soundboundeq} is overestimated
by roughly this factor in our case.

Otherwise,
we compute a candidate $h^*$ for $h(d)$ 
using Shank's baby-step/giant-step algorithm (specifically, we find an integer $h^*$ near in value to $\happrox$ such that $g^{h^*}$ is the identity element for up to three different group elements $g\in H(d)$).
We only compute one such candidate, but in practice, this candidate agrees with the true value of $h(d)$ (assuming GRH) with a failure rate of about $1.5\cdot 10^{-7}$ for $d$ in our range.

Next, we try to find the exponent of the group by determining the
smallest divisor $e^*$ of $h^*$ such that $g^{e^*}$ is the identity
element for up to 12 different group elements $g\in H(d)$. We have
that $e^*$ divides the order of the group, and if moreover the error
factor $E$ is small enough such that $h^*$ is the unique multiple of
$e^*$ in the interval $\happrox \cdot [\frac1E,E]$ then we have proven
that $h(d)=h^*$.  In practice, this step in our program only catches
cyclic groups and groups of the form $\Z/m\Z\times \Z/3\Z$ and
$\Z/m\Z\times \Z/5\Z$ ($m\geq 3$), but since the majority of the groups $H(d)$
should be of this form\footnote{We expect the class group to be cyclic
  more than $97.7\%$ of the time, and class groups containing
  $\Z/q\Z \times \Z/q\Z$ for prime $q > 5$ are very rare
  (cf. \cite[p. 56]{cohenlenstra}.)}, our
program stops at this step in $99.7\%$ of all cases it is reached.

If any of the above fails (that is, if $g^{h^*}$ was not the identity
element for some $g$ or if $E$ was not small enough) or if $h(d)$ was
determined to be an odd prime power, then we proceed to compute the
structure of the entire class group $H(d)$ using PARI.



\begin{proposition} \label{propnum}
  Assume GRH.
  Let $d<-8$ be a fundamental discriminant and 
  let $x_1<x_2$ be two integers such that $x_1\geq 1$ and $x_2\geq 10^5$. Then
  \begin{gather}
    \frac1{e^{\eta(x_1,x_2,d)}} \leq \dfrac{h(d)}{\frac{\sqrt{\abs d}}\pi e^{\nu(x_1,d)} } \leq e^{\eta(x_1,x_2,d)} \label{happrox}
  \end{gather}
  where
\begin{gather*}
    \nu(x_1,d) \defeq \sum_{p\leq x_1}-\log\left(1-\frac{ (\frac dp) }p\right), \\
    \eta(x_1,x_2,d) \defeq \dfrac{1.562 \log \abs d+0.655 \log x_2}{\sqrt x_2} +
      \log\log x_2+B +\dfrac{3\log x_2+4}{8\pi\sqrt x_2 } -\sum_{p\leq x_1}\dfrac 1p + \dfrac1{x_1},
  \end{gather*}
  and $\ds B := \lim_{x \rightarrow \infty} \left( \sum_{p \leq x} \frac{1}{p} - \log \log x \right) \approx 0.2614972128\ldots$ is the prime reciprocal constant.
\end{proposition}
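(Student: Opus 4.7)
\medskip

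\noindent\textbf{Proof proposal.} By the class number formula \eqref{CNF}, the assertion \eqref{happrox} is equivalent to the two-sided bound
\[
\bigl|\log L(1,\chi_d) - \nu(x_1,d)\bigr| \;\leq\; \eta(x_1,x_2,d).
\]
Assuming GRH, $L(s,\chi_d)$ has no zeros with $\Re(s) > 1/2$, and standard arguments (e.g.\ applying Mertens' estimate together with the explicit formula to control tails) justify that $\log L(1,\chi_d) = \sum_p -\log(1-\chi_d(p)/p)$ as a convergent series. Subtracting $\nu(x_1,d)$ leaves the tail $\sum_{p>x_1} -\log(1-\chi_d(p)/p)$, and my plan is to split this tail at $x_2$ and bound the two pieces by different methods: a purely elementary Mertens-type bound on $(x_1,x_2]$, and an explicit GRH bound on $(x_2,\infty)$.

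For the middle range $x_1 < p \leq x_2$, I would use the trivial estimate $\bigl|{-}\log(1-\chi_d(p)/p)\bigr| \leq \tfrac{1}{p} + \tfrac{1}{2p^2} + \cdots \leq \tfrac{1}{p} + \tfrac{1}{p(p-1)}$. Summing the first term over $x_1 < p \leq x_2$, I would apply an explicit version of Mertens' theorem (e.g.\ a Rosser--Schoenfeld--Dusart style bound) to get
\[
\sum_{x_1<p\leq x_2} \frac{1}{p} \;\leq\; \log\log x_2 + B - \sum_{p\leq x_1} \frac{1}{p},
\]
which explains the middle block of terms in $\eta$. The quadratic-in-$1/p$ remainder, summed over all $p > x_1$, is bounded by $\sum_{n > x_1} 1/(n(n-1)) \leq 1/x_1$, yielding the last term in $\eta$.

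For the tail $p > x_2$, expand $-\log(1-\chi_d(p)/p) = \chi_d(p)/p + \sum_{k\geq 2} \chi_d(p)^k/(kp^k)$. The $k\geq 2$ contribution is uniformly $O(1/x_2\log x_2)$ and is absorbed harmlessly into the error. The key piece is the linear sum $\sum_{p > x_2} \chi_d(p)/p$, which under GRH admits an explicit bound of Bach type: starting from the Weil explicit formula applied to $L(s,\chi_d)$ with an appropriate smooth test function supported near $[x_2,\infty)$, one obtains
\[
\Bigl|\sum_{p>x_2} \frac{\chi_d(p)}{p}\Bigr| \;\leq\; \frac{1.562\,\log|d| + 0.655\,\log x_2}{\sqrt{x_2}} + \frac{3\log x_2 + 4}{8\pi\sqrt{x_2}},
\]
after partial summation from the corresponding GRH bound on $\sum_{n\leq x}\chi_d(n)\Lambda(n)$. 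The factor $1/(8\pi)$ is the telltale sign of the coefficient arising from the explicit formula integrated over the critical line, and the two constants $1.562$ and $0.655$ emerge from a careful estimation of the archimedean and truncation contributions (as in Bach, \emph{Math.\ Comp.}\ 55 (1990), and subsequent refinements by Schoof, Ramar\'e--Rumely, etc.). Combining this bound with the middle-range estimate yields $\eta(x_1,x_2,d)$ as stated.

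The main obstacle is obtaining the precise numerical constants $1.562$, $0.655$, and $1/(8\pi)$ in the GRH-conditional tail bound: these are not immediate from black-box zero-free-region arguments but require the explicit formula applied to a carefully chosen smoothing kernel, together with explicit control of the gamma-factor contributions and an explicit lower bound on the modulus of $L(s,\chi_d)$ on the line $\Re s = 1/2 + \varepsilon$. The remaining parts of the argument are essentially routine estimates with Mertens-type sums and geometric series for higher-order Euler-product terms.
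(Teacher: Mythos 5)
Your outline matches the paper's: reduce via the class number formula to $\abs{\log L(1,\chi_d)-\nu(x_1,d)}\leq\eta(x_1,x_2,d)$, treat the range $x_1<p<x_2$ elementarily, and invoke an explicit GRH bound for the far tail. But there are two genuine gaps. First, your middle-range estimate
\[
\sum_{x_1<p\leq x_2}\frac1p\;\leq\;\log\log x_2+B-\sum_{p\leq x_1}\frac1p
\]
is not a theorem: it would require $\sum_{p\leq x_2}1/p\leq\log\log x_2+B$ with no error term, which explicit forms of Mertens do not give (and cannot, since the difference changes sign). The paper instead uses Schoenfeld's RH-conditional inequality $\sum_{p\leq x_2}1/p<\log\log x_2+B+\frac{3\log x_2+4}{8\pi\sqrt{x_2}}$, and that is precisely where the $\frac{3\log x_2+4}{8\pi\sqrt{x_2}}$ term in $\eta$ comes from. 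You have misattributed that term to the GRH character-sum tail; if you also fold it into your tail bound, you are using it twice for the wrong reasons and zero times for the right one, and your middle-range step is left unjustified.

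Second, the far tail. The paper does not estimate $\sum_{p>x_2}\chi_d(p)/p$ and the higher-order terms separately; it cites Bach's Theorem 9.1 with Table 4, which under GRH bounds $\abs{\log L(1,\chi)-\log\prod_{p<x_2}(1-\chi(p)/p)^{-1}}$ by $\frac{1.562\log\abs d+0.655\log x_2}{\sqrt{x_2}}$ for $x_2\geq10^5$ --- a single inequality that absorbs all $m\geq1$ Euler-factor terms for $p\geq x_2$ at once. Your plan to rederive constants of this quality from the explicit formula is, as you concede, the main obstacle, and it is not carried out; moreover, in an explicit-constants proposition you cannot absorb the $k\geq2$ tail contribution ``harmlessly'' into a big-$O$ --- every term must land in a named piece of $\eta$, and in your decomposition there is no piece for it. The remaining ingredients (the $\sum_{p>x_1}1/p^2\leq1/x_1$ bound for the quadratic remainder in the middle range, and the reduction via the class number formula) agree with the paper.
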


\begin{proof}
  Let $\chi$ be the real-valued character $(\frac d\cdot)$ of modulus $\abs d>1$.
  Theorem 9.1 combined with Table 4 in \cite{bach} states that under GRH,
  \begin{gather}
    \abs{\log L(1,\chi)-\log \prod_{p<x_2}\dfrac1{1-\frac{\chi(p)}p}   } \leq \dfrac{1.562 \log \abs d+0.655 \log x_2}{\sqrt x_2} \label{moo1} 
  \end{gather}
  for any $x_2\geq 10^5$. 
  By Taylor expansion, we have
  \begin{gather}
    \begin{split}
    \log \prod_{p<x_2}\dfrac1{1-\frac{\chi(p)}p} 
    &= \sum_{p<x_2}-\log\left(1-\frac{\chi(p)}p\right) \\
    &= \sum_{p\leq x_1}-\log\left(1-\frac{\chi(p)}p\right)+
       \sum_{x_1<p<x_2} \sum_{m=1}^\infty\frac1m\left(\dfrac{\chi(p)}p\right)^m.
   \end{split} \label{moo2}
  \end{gather}
  We can bound the terms with $m\geq 2$ by
  \begin{gather}
    \abs{\sum_{x_1<p<x_2}\sum_{m=2}^\infty \frac1m \left(\frac{\chi(p)}{p}\right)^m}  \leq \sum_{x_1<p<x_2}\frac{1}{p^2} \leq \int_{x_1}^\infty\frac{\dd t}{t^2}=\frac1{x_1}. \label{moo3}
  \end{gather}
  since   $\abs{\sum_2^\infty \frac{x^m}m}\leq \frac12 \sum_2^\infty \abs{x}^m=\frac{\abs{x}^2}{2(1-\abs x)}\leq x^2$ for any $\abs x\leq \frac 12$.
  For $m=1$ we have
  \begin{gather}
    \abs{\sum_{x_1<p<x_2}\dfrac{\chi(p)}p} \leq
    \sum_{x_1<p<x_2}\dfrac{1}p <
    \sum_{p\leq x_2}\dfrac{1}p - \sum_{p\leq x_1}\dfrac{1}p, \label{moo4}
  \end{gather}
  where the first term on the right-hand side can be bounded using inequality (6.21) in \cite{schoenfeld}, which states that under RH,
  \begin{gather}
    \sum_{p\leq x_2}\dfrac1p < \log\log x_2+ B + \dfrac{3\log x_2+4}{8\pi\sqrt x_2} \label{moo5} 
  \end{gather}
  for any $x_2\geq 13.5$.

  Combining the inequalities \eqref{moo1}, \eqref{moo2}, \eqref{moo3}, \eqref{moo4}, \eqref{moo5} we obtain
  \begin{gather}
    \abs{\log L(1,\chi) - \nu(x_1,d)} \leq \eta(x_1,x_2,d),
  \end{gather}
  and the inequality \eqref{happrox} follows from taking exponentials and applying the class number formula $h(d)=\frac{\sqrt{\abs d}}{\pi} L(1,\chi)$ for $d<-8$.
\end{proof}

\subsection {Computer resources}

Our program comprises 1500 lines of \verb|C++| code. The total time for the computation was 4.5 CPU years (or 6.5 hours in real-time, running on 6400 cores simultaneously), requiring 1 TB of temporary memory storage.

We used the computer package PARI (cf. \cite{pari}) to compute the
groups $H(d)$ and we used the computer package
\verb|primesieve| \cite{walish} to iterate through primes.



\end{document}